\newlength{\defbaselineskip}
\newcommand{\setlinespacing}[1]%
           {\setlength{\baselineskip}{#1 \defbaselineskip}}
\theoremstyle{plain}
\newtheorem{thm}{Theorem}[section]
\newtheorem{cor}[thm]{Corollary}
\newtheorem{pro}[thm]{Problem }
\newtheorem{lem}[thm]{Lemma}
\newtheorem{prop}[thm]{Proposition}
\newtheorem{exam}[thm]{Example}
\theoremstyle{definition}
\newtheorem{defn}{Definition}[section]
\newtheorem{ass}{Assumption}[section]
\newtheorem{rmk}{Remark}[section]
\renewcommand{\d}{\mathrm{d}}
\makeatletter\@addtoreset{equation}{section} \makeatother
\DeclareMathOperator*{\esssup}{ess\,sup}
\begin{document}

\title{The Ergodic Linear-Quadratic Optimal Control Problems with Random Periodic Coefficients \thanks{This paper is supported by National Key R\&D Program of China (No.2022YFA1006101), National Natural Science Foundation of China (No.12371445) and State Key Laboratory of Cryptography and Digital Economy Security, Shandong University (No.KFZD2505).}}

\date{}

\author{Jiacheng Wu {\rm $^{a}$}, \hspace{1cm}  Qi  Zhang {\rm $^{a,b,c,}$}\thanks{Corresponding author. Email: qzh@fudan.edu.cn} \hspace{1cm}\\
\small{$^{a}$School of Mathematical Sciences, Fudan University, Shanghai 200433, China}\\
\small{$^{b}$Laboratory of Mathematics for Nonlinear Science, Fudan University, Shanghai 200433, China}\\
\small{$^{c}$State Key Laboratory of Cryptography and Digital Economy Security, Shandong University, Jinan 250100, China}
}

\maketitle

\begin{abstract}
	In this paper, we concern with the ergodic linear-quadratic closed-loop optimal control problems with random periodic coefficients. We put forward the random periodic mean-square exponentially stable condition, and prove the random periodicity of solutions to state equation based on it. Then we prove the existence and uniqueness of random periodic solutions to two types of backward stochastic differential equations which serve as stochastic Riccati equations in the procedure of completing the square. With the random periodicity of state equation and stochastic Riccati equations, the ergodic cost functional on infinite horizon is simplified to an equivalent cost functional over a single periodic interval without limit. Finally, the closed-loop optimal controls are explicitly given based on random periodic solutions to state equation and stochastic Riccati equations.
\end{abstract}
\textbf{Keywords}: ergodic LQ optimal control problems, random periodic coefficients, closed-loop, random periodic solutions, stochastic Riccati equation.\vspace{2mm}\\
\textbf{MSC code}:
93E20, 93C15

\section{Introduction}

The stochastic linear-quadratic (LQ) optimal control problem plays a pivotal role in the theoretical study and real world. Its power lies in providing tractable, interpretable, linear feedback laws for optimal decision-making under uncertainty across engineering, economics, and finance. For example, the mean-variance model is the core problem in finance which is actually a stochastic LQ problem with the wealth progress satisfying a linear stochastic differential equation (SDE) and the quadratic cost functional given by the variance of wealth. Moreover, in the machine learning, the linear state equation and quadratic cost functional define the foundational structure of the LQ Regulator problem--a canonical benchmark and subclass of Markov decision processes. It can be said without hesitation that the stochastic LQ optimal control problem directly supports critical applications ranging from navigation systems and manufacturing control to dynamic portfolio optimization and macroeconomic policy design. In addition, it forms the essential foundation for modern algorithms in reinforcement learning and adaptive control.

The stochastic LQ problem has undergone a long development phase. Initial results were all derived under the assumption that the coefficients of the control system were deterministic, with the earliest literature traceable to Woham \cite{wo2} in 1968. However, as we all know, stochastic LQ problems with random coefficients are necessary in some situation. Taking the financial model as an example again, when the price dynamics of risky asset depends on historical information, which is reasonable and realistic, the state equation satisfied by the wealth process becomes a SDE with random coefficients.

However, the solvability of stochastic LQ problem with random coefficients has essential difficulties. As Bismut pointed out in 1976 in \cite{bi}, when the coefficients are random, the Riccati equation in the stochastic LQ problem is not a deterministic ordinary differential equation (ODE) but a backward stochastic differential equation (BSDE), to be specific a non-Lipschitz BSDE. Its solvability was described by Bismut as "a challenging task". The challenging task was completely solved in 2003 by Tang \cite{ta}, in which a viewpoint of stochastic flow is used for the associated stochastic Hamiltonian system based on some refined probabilistic methods and techniques. After that, a series of related works emerged. For instance, Tang \cite{ta2} further gives a novel proof to the existence for Riccati equation appearing in stochastic LQ problem with random coefficients via the dynamic programming principle, Li, Wu and Yu \cite{li-wu-yu} studies the stochastic LQ problem with random jumps and Sun, Xiong and Yong \cite{su-xi-yo} investigates indefinite stochastic LQ problems with random coefficients, to name but a few.

In this paper, we focus on a special class of stochastic LQ problems with random coefficients on infinite time horizon, i.e. the ergodic stochastic LQ closed-loop control problems where their coefficients satisfy random periodic properties. Due to the presence of random periodic coefficients, the closed-loop state equation has  random periodic solutions. We aim to explore the roles which random periodic solutions play in the solvability of optimal control and whether they can endow the optimal controls of the stochastic LQ problem with certain distinctive features. Therefore, it is necessary to integrate the concepts and methods of random dynamical systems into the control problem. The main concept of dynamical system we use in this paper is the random periodic solution which is defined in Zhao and Zheng \cite{zh-zhe} and Feng, Zhao and Zhou \cite{fe-zh-zh}. The random periodic solution generalize the stationary solution by extending the properties of pathwise stationarity from any time to a fixed periodic time. After more than a decade of development, many theoretical and practical results on random periodic solutions have emerged. See Wang \cite{wa}, Feng, Wu and Zhao \cite{fe-wu-zh}, Song, Song and Zhang \cite{so-so-zh}, Feng, Qu and Zhao \cite{fe-qu-zh}, Wu and Yuan \cite{wu-yu} for example.

Actually, in our paper \cite{wu-zh}, we have explored this issue in deterministic coefficients case. We give the explicit solution of the optimal control for the ergodic stochastic LQ problem with deterministic periodic coefficients based on the periodic measure. We prove the existence of the periodic measure for state equation and, by the convergence of measures, transform the standard ergodic cost functional into an equivalent cost functional over one period. As a result, the optimal controls are represented with the help of the deterministic Riccati equations with periodic solutions and periodic measures. For more stochastic control problem with deterministic periodic coefficients see also Sun and Yong \cite{su-yo} and Ma \cite{ma}, besides \cite{wu-zh}. However, just as most cases, there exist essential difficulties in the extension from stochastic LQ problems with deterministic periodic coefficients to those with random periodic coefficients.
These difficulties lie not only in the fact that the Riccati equations evolve from deterministic ODEs with periodic solutions to BSDEs with random periodic solutions, but also in that the method of convergence of measures used in \cite{wu-zh} to derive the equivalent transformation of the cost functional is not applicable to random coefficients. Instead, we need to utilize the pathwise periodic properties by a different method since the periodic measures, the steady properties in the distributional sense, are not enough to get the equivalent transformation. Therefore, the case of random periodic coefficients is by no means a simple extension of \cite{wu-zh}.

To realize our goal, we prove that the state equation and the stochastic Riccati equations have random periodic solutions, and utilize their pathwise periodic properties to cover the gap brought by the failure of convergence of measures in the random coefficients case. This allows us to characterize the optimal control with random periodic coefficients in terms of the random periodic solutions. As far as we know, there is no existing results on stochastic LQ problems with random periodic coefficients, except for Guatteri and Masiero \cite{gu-ma2}, in which the stochastic LQ problem with random stationary coefficients are concerned. Under assumptions such as the stationarity of coefficients, a so-called $\sqrt{S}$-stabilizable condition and the uniform boundedness of solution to the stochastic Riccati equation, this paper shows that the value function of ergodic LQ problem is equal to the value function of a stochastic LQ problem over $[0,1]$, and then the optimal control is given by solving the stochastic LQ problem over $[0,1]$. Note that the Brownian motion and the state equation in their model start from $-\infty$, so their result essentially stems from the pathwise stationary property derived from $-\infty$. In contrast, we only suppose the random periodicity of coefficients in this paper, and all results rely on random periodic mean-square exponentially stabilizable condition and necessary positive definiteness of coefficients. Moreover, the Brownian motion and the state equation in our model start normally at time $0$, and instead a fixed-point method is used to get the pathwise steady properties of state equation with a proper initial value in this paper. In particular, if the coefficients satisfy random periodicity for any period, the random periodic coefficients upgrade to random stationary coefficients. Thus, our results could include the stationary coefficients case, though the model setup is different from \cite{gu-ma2}. Finally, it should be noted that we believe that the positive definiteness of coefficients are potentially relaxed to some indefinite condition, which we will consider in future work.

The rest of this paper is organized as follows. In Section 2, we put forward the random periodic mean-square exponentially stable condition, and prove some useful estimates on concerned stochastic control systems. Section 3 is devoted to the existence and uniqueness of the initial state, with which the solution to the state equation is pathwise random periodic. Then the ergodic LQ optimal control problem is introduced and the existence and uniqueness of solutions to stochastic Riccati equations are derived in Section 4. Finally, we transform it into an equivalent stochastic LQ problem over a single periodic interval, and then present the explicit form of optimal solutions based on the random periodic solutions to state equation and stochastic Riccati equations.

\section{Random periodic mean-square exponentially stable condition}
Let $(\Omega,\mathscr{F},\mathbb{P})$ be a complete probability space, on which a standard $1$-dimensional Brownian motion $\left\{W_t,t\ge 0\right\}$ is defined with $\mathbb{F}=\left\{\mathscr{F}_t\right\}_{t\ge0}$ be the natural filtration of $W$ augmented by the $\mathbb{P}$-null sets in $\mathscr{F}$.

For any $T\in(0,\infty]$ and Euclidean space $E$, define\\
$\bullet$ $L_{\mathscr{F}}^2(\Omega;E)=\{\xi:\Omega\rightarrow E\Big|\xi\ \text{is}\ \mathscr{F}\text{-measurable},\mathbb{E}|\xi|^2<\infty\}$,\\
		$\bullet$ $L_{\mathbb{F}}^2(0,T;E)=\{X:\left[0,T\right]\times\Omega\rightarrow E\Big|X\ \text{is}\ \mathbb{F}\text{-progressively measurable},\ \mathbb{E}\int_0^{T}|X_t|^2\d t<\infty\}$,\\
		$\bullet$ $S_{\mathbb{F}}^2(0,T;E)=\{X:\left[0,T\right]\times\Omega\rightarrow E\Big|X\ \text{is}\ \mathbb{F}\text{-progressively measurable},\ \mathbb{E}\sup\limits_{t\in\left[0,T\right]}|X_t|^2<\infty\}$,\\	
		$\bullet$ $L_{\mathscr{F}}^\infty(\Omega;E)=\{\xi:\Omega\rightarrow E\Big|\xi\ \text{is}\ \mathscr{F}\text{-measurable},\ \esssup\limits_{\omega\in\Omega}|\xi|<\infty\}$,\\
		$\bullet$ $L_{\mathbb{F}}^\infty(0,T;E)=\{X:\left[0,T\right]\times\Omega\rightarrow E\Big|X\ \text{is}\ \mathbb{F}\text{-progressively measurable}, \esssup\limits_{(t,\omega)\in[0,T]\times\Omega}|X_t|<\infty\}$,\\
		$\bullet$ $L_{\mathbb{F}}^{2,loc}(0,\infty;E)= \bigcap\limits_{T>0}L_{\mathbb{F}}^2(0,T;E)$.

Denote by $\mathbb{S}^n$ the space of all $n\times n$ symmetric real matrices, by $\mathbb{S}_{+}^n$ the space of all $n\times n$ positive definite real matrices and by $\bar{\mathbb{S}}_{+}^n$ the space of all $n\times n$ positive semi-definite real matrices. $\left\langle\cdot,\cdot\right\rangle$ denotes the inner product of two matrices in the sense that $\left\langle M,N\right\rangle=\text{tr}\left(M^\top N\right)$ for any $M,N\in\mathbb{R}^{m\times n}$. For any $M,N\in\mathbb{S}^n$, we write $M\ge N$, if $M-N\in\bar{\mathbb{S}}_{+}^n$
For a measurable function $f:[0,\infty)\times\Omega\rightarrow\mathbb{S}^n$, $f$ is called uniformly positive definite if there exists an $\alpha>0$ such that
\begin{equation*}
	\begin{aligned}
		(f_t-\alpha I_n)\in\bar{\mathbb{S}}_{+}^n\ {\text for}\ {\text all}\ t\in[0,\infty)\ {\text a.s.}
	\end{aligned}
	\nonumber
\end{equation*}

Consider the shift operator of the Brownian sample path. 
For any $t\ge 0$, let $\theta_t:\Omega\rightarrow\Omega$ be a measurable mapping on $(\Omega,\mathscr{F},\mathbb{P})$ defined by
\begin{equation}\label{shift property of Brownian motion}
	\begin{aligned}
		\theta_t\circ W_s=W_{s+t}-W_t \ {\text for}\ {\text any}\ s\ge 0.
	\end{aligned}
\end{equation}
Then $\theta$ satisfies $\theta_0=I$, $\theta_s\circ\theta_t=\theta_{s+t}$ and $\mathbb{P}\cdot\theta_t^{-1}=\mathbb{P}$.
For any $\mathscr{F}$-measurable $\phi:\Omega\rightarrow E$, denote
\begin{equation*}
	\begin{aligned}
		\theta_t\circ\phi(\omega)=\phi(\theta_t(\omega)),
	\end{aligned}
\end{equation*}
and then we clarify some basic notation.
\begin{defn}\label{wz28}
	Given $\tau>0$, a measurable function $f:[0,\infty)\times\Omega\rightarrow E$ is called $\tau$-random periodic if
	\begin{equation*}
		\begin{aligned}
			f_{t+\tau}=\theta_{\tau}\circ f_t\ \ {\text a.s.}\ \ {\text for}\ {\text any}\ t\geq0.
		\end{aligned}
	\end{equation*}
\end{defn}
\begin{defn}
For a solution to a (backward) stochastic differential equation, if it is $\tau$-random periodic for a given $\tau>0$, it is called a $\tau$-random periodic solution, or random periodic solution for simplicity.
\end{defn}
Then we define a space of $\tau$-random periodic functions
$$\mathscr{B}_\tau(E)=\Big\{f:[0,\infty)\times\Omega\rightarrow E\Big|f\in L_{\mathbb{F}}^\infty(0,\infty;E)\text{ and }\tau\text{-random periodic}\Big\}.$$
We use the brief notation $\mathscr{B}_\tau$ if the value space $E$ is clear in above definitions.

Let $\mathcal{N}$ denote the class of $\mathbb{P}$-null sets in $\mathscr{F}$, denote
\begin{equation*}
	\begin{aligned}
		\mathscr{F}_{s,t}^W=\sigma(W_r-W_s:0\le s\le r\le t)\vee\mathcal{N}\ {\text and}\ \mathscr{F}_{t,\infty}^W=\bigvee\limits_{T\ge t}\mathscr{F}_{t,T}^W.
	\end{aligned}
\end{equation*}
In particular, we write $\mathscr{F}_{0,t}^W=\mathscr{F}_t^W$ for simplicity.
Note that if $f$ is $\tau$-random periodic and $\mathbb{F}$-progressively measurable in $[0,\tau)$, then for any $k\in\mathbb{N}$,
$f$ is $\{\mathscr{F}_{k\tau,t}^W\}_{k\tau\leq t<(k+1)\tau}$-progressively measurable in $[k\tau,(k+1)\tau)$, which is compatible with $f$ is $\mathbb{F}$-progressively measurable.

To deal with the infinite horizon state equation, we introduce the mean-square exponentially stable condition.

\begin{defn} For a given $\tau>0$, assume $A,C\in\mathscr{B}_\tau(\mathbb{R}^{n\times n})$. The 2-tuple of coefficients $[A,C]$ is $\tau$-random periodic mean-square exponentially stable if there exist $\beta,\lambda,\delta>0$ such that the homogeneous system
	\begin{equation}\label{homogeneous system}
		\left\{\begin{aligned}
			\d\Phi_t&=A_t\Phi_t\d t+C_t\Phi_t\d W_t,\ \ \ t\ge 0,\\
			\Phi_0&=I_n,
		\end{aligned}\right.
	\end{equation}
	admits a unique solution $\Phi\in L_{\mathbb{F}}^{2}(0,\infty;\mathbb{R}^{n\times n})$ and for any $t\ge0$,
	\begin{equation}\label{stability condition}
		\begin{aligned}
			&\mathbb{E}\big|\Phi_t\big|^2\le \beta e^{-\lambda t},\\			&\inf\limits_{r\in[0,\tau)}\mathbb{E}\Big(\int_r^\infty(\Phi_s\Phi_r^{-1})^\top\Phi_s\Phi_r^{-1}\d s\Big|\mathscr{F}_r\Big)\ge\delta I_n.
		\end{aligned}
	\end{equation}
\end{defn}
Actually \eqref{stability condition} holds under certain condition. We present an example below.
\begin{exam}
Consider a $1$-dimensional SDE
\begin{equation}\label{1 dim case}
		\left\{\begin{aligned}
			\d\Phi_t=&a_t\Phi_t\d t+c_t\Phi_t\d W_t,\ \ \ t\ge 0,\\
			\Phi_0=&1.
		\end{aligned}\right.
	\end{equation}
For a given $\tau>0$, assume $a,c\in\mathscr{B}_\tau(\mathbb{R})$, and there exist $0<\mu_1<\mu_2$ such that for any $t\ge 0$, $-\mu_2\le2a_t+c_t^2\le-\mu_1$ a.s. Then \eqref{1 dim case} has a unique solution $\Phi\in L_{\mathbb{F}}^{2}(0,\infty;\mathbb{R})$ with an explicit form
	\begin{equation*}
		\begin{aligned}
			\Phi_t=e^{\int_0^t(a_s-\frac{1}{2}c_s^2)\d s+\int_0^tc_s\d W_s},\ \ \ t\ge 0.
		\end{aligned}
	\end{equation*}
Obviously, \eqref{stability condition} holds since
	\begin{equation*}
		\begin{aligned}
			\mathbb{E}\big|\Phi_t\big|^2&=\mathbb{E}e^{\int_0^t(2a_s+c_s^2)\d s}\le e^{-\mu_1 t},\\
			\mathbb{E}\Big(\int_r^\infty(\Phi_s\Phi_r^{-1})^\top\Phi_s\Phi_r^{-1}\d s\Big|\mathscr{F}_r\Big)&=\mathbb{E}\Big(\int_r^\infty e^{\int_r^{s}(2a_u+c_u^2)\d u}\d s\Big|\mathscr{F}_r\Big)\\
			&\ge\mathbb{E}\int_r^\infty e^{-\mu_2(s-r)}\d s=1/\mu_2.
		\end{aligned}
	\end{equation*}
\end{exam}

Based on the independent increment of Brownian motion, we have the following lemma for solutions to the homogeneous closed-loop system (\ref{homogeneous system}).
\begin{lem}\label{time shift property of homogeneous system} For a given $\tau>0$, assume $A,C\in\mathscr{B}_\tau(\mathbb{R}^{n\times n})$ and $\Phi$ is the solution to \eqref{homogeneous system}. Then for any $t\ge 0$, $k\in\mathbb{N}$,
	\begin{equation*}
		\begin{aligned}
			&\Phi_{t+k\tau}\Phi_{k\tau}^{-1}=\theta_{k\tau}\circ\Phi_t \ \ {\text a.s.},\\
			&\Phi_{t+k\tau}\Phi_{k\tau}^{-1} \ {\text and } \ \Phi_{k\tau} \ {\text are} \ {\text independent}.
		\end{aligned}
	\end{equation*}
\end{lem}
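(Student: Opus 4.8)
The plan is to show that, for each fixed $k\in\mathbb{N}$, the two processes $t\mapsto\Phi_{t+k\tau}\Phi_{k\tau}^{-1}$ and $t\mapsto\theta_{k\tau}\circ\Phi_t$ solve one and the same linear SDE driven by the shifted Brownian motion $\tilde W_t:=W_{t+k\tau}-W_{k\tau}$, so that pathwise uniqueness forces them to coincide; the independence claim then falls out of the independent-increments structure of $W$. Before that I would record two elementary facts: iterating Definition \ref{wz28} (and using $\theta_\tau\circ\cdots\circ\theta_\tau=\theta_{k\tau}$) gives $A_{t+k\tau}=\theta_{k\tau}\circ A_t$ and $C_{t+k\tau}=\theta_{k\tau}\circ C_t$ a.s.; and $\Phi_{k\tau}$ is invertible a.s.\ (as is already presupposed in \eqref{stability condition}), with $\Phi_{k\tau}^{-1}$ being $\mathscr{F}_{k\tau}^W$-measurable. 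Since $A,C\in\mathscr{B}_\tau$ are essentially bounded, all linear SDEs below have bounded coefficients and hence enjoy strong existence and pathwise uniqueness.

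First I would write \eqref{homogeneous system} in integral form on $[k\tau,t+k\tau]$, substitute $s\mapsto s+k\tau$, and multiply on the right by $\Phi_{k\tau}^{-1}$; since $\tilde W$ is a Brownian motion for the filtration $\{\mathscr{F}_{t+k\tau}^W\}_{t\ge0}$, the process $\Psi_t:=\Phi_{t+k\tau}\Phi_{k\tau}^{-1}$ satisfies
\[
\d\Psi_t=A_{t+k\tau}\Psi_t\,\d t+C_{t+k\tau}\Psi_t\,\d\tilde W_t,\qquad \Psi_0=I_n.
\]
By the periodicity identity above, the coefficients $A_{t+k\tau}=\theta_{k\tau}\circ A_t$ and $C_{t+k\tau}=\theta_{k\tau}\circ C_t$ are measurable with respect to $\sigma(\theta_{k\tau}\circ W_s:s\le t)=\sigma(\tilde W_s:s\le t)$, so this is an SDE whose data are adapted to the natural filtration of $\tilde W$; pathwise uniqueness then forces $\Psi_t$ to be $\sigma(\tilde W_s:s\le t)$-measurable, in particular $\mathscr{F}_{k\tau,\infty}^W$-measurable.

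Next I would apply the shift $\theta_{k\tau}$ to the integral form of \eqref{homogeneous system} itself. The key point is that $\theta_{k\tau}$ commutes with both the Lebesgue integral and the It\^o integral, in the sense that $\theta_{k\tau}\circ\int_0^t C_s\Phi_s\,\d W_s=\int_0^t(\theta_{k\tau}\circ C_s)(\theta_{k\tau}\circ\Phi_s)\,\d\tilde W_s$: this is immediate for simple integrands, because by \eqref{shift property of Brownian motion} the shift sends the increment $W_b-W_a$ to $\tilde W_b-\tilde W_a$, and it passes to the $L^2$-limit since $\theta_{k\tau}$ preserves $\mathbb{P}$. Hence $\theta_{k\tau}\circ\Phi$ solves precisely the SDE of the previous paragraph, with the same deterministic initial value $\theta_{k\tau}\circ I_n=I_n$. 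By pathwise uniqueness, $\Phi_{t+k\tau}\Phi_{k\tau}^{-1}=\Psi_t=\theta_{k\tau}\circ\Phi_t$ for all $t\ge0$, off a single $\mathbb{P}$-null set (using continuity of the solutions in $t$), which is the first assertion. For the second, $\Phi_{t+k\tau}\Phi_{k\tau}^{-1}$ is $\mathscr{F}_{k\tau,\infty}^W$-measurable by the previous paragraph, while $\Phi_{k\tau}$ is $\mathscr{F}_{0,k\tau}^W$-measurable, and $\mathscr{F}_{0,k\tau}^W$ and $\mathscr{F}_{k\tau,\infty}^W$ are independent by the independent increments of $W$.

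The main obstacle is the commutation of the sample-path shift with the stochastic integral used in the third paragraph; equivalently, one must verify that the strong solution of \eqref{homogeneous system} is a fixed measurable functional of the driving path, so that evaluating that functional along the shifted path $\tilde W$ returns the solution of the shifted equation. Once this is in place, everything else is routine bookkeeping with the flow property of \eqref{homogeneous system} and the independent-increments structure of Brownian motion.
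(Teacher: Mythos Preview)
Your proposal is correct and follows essentially the same route as the paper: both arguments show that $\Phi_{t+k\tau}\Phi_{k\tau}^{-1}$ and $\theta_{k\tau}\circ\Phi_t$ solve the same linear SDE driven by the shifted Brownian motion $W_{t+k\tau}-W_{k\tau}$ (using the $\tau$-random periodicity of $A,C$ and \eqref{shift property of Brownian motion}), invoke pathwise uniqueness, and then read off the independence from the $\mathscr{F}_{k\tau,t+k\tau}^W$- versus $\mathscr{F}_{k\tau}^W$-measurability of the two factors. Your write-up is a bit more explicit than the paper's about why the shift commutes with the It\^o integral, which is a welcome addition but does not change the structure of the argument.
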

\begin{proof}
	Notice that for any $t\ge 0$ ,$k\in\mathbb{N}$, $\Phi_{t+k\tau}\Phi_{k\tau}^{-1}$ satisfies SDE
	\begin{equation*}
		\left\{\begin{aligned}
			\d\phi_t&=A_{t+k\tau}\phi_t\d t+C_{t+k\tau}\phi_t\d W_{t+k\tau},\ \ \ t\ge 0,\\
			\phi_0&=I_n.
		\end{aligned}\right.
	\end{equation*}
	and $\theta_{k\tau}\circ\Phi_t$ satisfies SDE
	\begin{equation*}
		\left\{\begin{aligned}
			\d\phi_t&=(\theta_{k\tau}\circ A_t)\phi_t\d t+(\theta_{k\tau}\circ C_t)\phi_t\d (\theta_{k\tau}\circ W_t),\ \ \ t\ge 0,\\
			\phi_0&=I_n.
		\end{aligned}\right.
	\end{equation*}
	Due to (\ref{shift property of Brownian motion}),
	\begin{equation*}
		\begin{aligned}
			\theta_{k\tau}\circ W_t=W_{t+k\tau}-W_{k\tau}.
		\end{aligned}
	\end{equation*}
	By the $\tau$-random periodicity of $A,C$, together with the measurability of $\Phi_{t+k\tau}\Phi_{k\tau}^{-1}$ and $\theta_{k\tau}\circ\Phi_t$ with respect to $\mathscr{F}_{k\tau,t+k\tau}^W$, we have
	\begin{equation*}
		\begin{aligned}
			\Phi_{t+k\tau}\Phi_{k\tau}^{-1}=\theta_{k\tau}\circ\Phi_t\ \ {\text a.s.}
		\end{aligned}
	\end{equation*}
	
	On the other hand, bear in mind that $\Phi_{t+k\tau}\Phi_{k\tau}^{-1}$ and $\Phi_{k\tau}$ is measurable with respect to $\mathscr{F}_{k\tau,t+k\tau}^W$ and $\mathscr{F}_{k\tau}^W$, respectively.
But $\mathscr{F}_{k\tau,t+k\tau}^W$ and $\mathscr{F}_{k\tau}^W$ are independent due to the independent increment property of Brownian motion. Hence the independence of $\Phi_{t+k\tau}\Phi_{k\tau}^{-1}$ and $\Phi_{k\tau}$ follows.
\end{proof}

In the following two propositions, we give an equivalent characterization of the random periodic mean-square exponentially stable condition.
\begin{prop}\label{linear BSDE} For a given $\tau>0$, assume that
	$[A,C]$ is $\tau$-random periodic mean-square exponentially stable and $\Lambda\in \mathscr{B}_\tau(\mathbb{S}^n)$. Then BSDE
	\begin{equation}\label{linear BSDE equation}
		\begin{aligned}
			\d K_t=&-\big(K_tA_t+A_t^\top K_t+C_t^\top K_tC_t+L_tC_t+C_t^\top L_t+\Lambda_t\big)\d t+L_t\d W_t
		\end{aligned}
	\end{equation}
admits a unique $\tau$-random periodic solution $(K,L)\in L_{\mathbb{F}}^\infty(0,\infty;\mathbb{S}^n)\times L_{\mathbb{F}}^{2,loc}(0,\infty;\mathbb{S}^n)$.  Moreover, if $\Lambda\in\mathscr{B}_\tau(\bar{\mathbb{S}}_+^n)$, then $K\in\mathscr{B}_\tau(\bar{\mathbb{S}}_+^n)$; if $\Lambda\in\mathscr{B}_\tau(\mathbb{S}_+^n)$ is uniformly positive definite, then $K\in\mathscr{B}_\tau(\mathbb{S}_+^n)$ is uniformly positive definite.
\end{prop}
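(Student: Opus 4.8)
The plan is to read \eqref{linear BSDE equation} as the stochastic Lyapunov equation driven by the homogeneous flow $\Phi$ of \eqref{homogeneous system}, to build a solution by finite-horizon approximation, and then to upgrade uniqueness into $\tau$-random periodicity by a shift argument. The backbone is the following identity: if $(K,L)$ solves \eqref{linear BSDE equation} and $Y_s:=\Phi_s\Phi_t^{-1}$ for $s\ge t$ (so $Y_t=I_n$ and $\d Y_s=A_sY_s\,\d s+C_sY_s\,\d W_s$), then It\^o's formula gives
\begin{equation*}
	\d\big(Y_s^\top K_s Y_s\big)=-\,Y_s^\top\Lambda_s Y_s\,\d s+\big(\cdots\big)\d W_s,
\end{equation*}
since every $K$- and $L$-dependent drift term cancels. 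Consequently, on a finite interval $[t,T]$ with terminal value $K_T=G$,
\begin{equation}\label{prop-rep}
	K_t=\mathbb{E}\Big((\Phi_T\Phi_t^{-1})^\top G\,\Phi_T\Phi_t^{-1}+\int_t^T(\Phi_s\Phi_t^{-1})^\top\Lambda_s\,\Phi_s\Phi_t^{-1}\,\d s\,\Big|\,\mathscr{F}_t\Big).
\end{equation}

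For existence I would first solve \eqref{linear BSDE equation} on $[0,T]$ with $K_T=0$; since $A,C\in\mathscr{B}_\tau$ are bounded, the driver is Lipschitz in $(K,L)$, so there is a unique $(K^{(T)},L^{(T)})\in S_{\mathbb{F}}^2(0,T;\mathbb{S}^n)\times L_{\mathbb{F}}^2(0,T;\mathbb{S}^n)$ (the equation preserves symmetry, so the $\mathbb{R}^{n\times n}$-valued solution is in fact $\mathbb{S}^n$-valued by uniqueness), and \eqref{prop-rep} holds with $G=0$. The central quantitative input, and in my view the main obstacle, is the a priori estimate
\begin{equation*}
	\begin{aligned}
		&\sup_{r\ge0}\ \esssup_{\omega}\ \mathbb{E}\Big(\int_r^\infty|\Phi_s\Phi_r^{-1}|^2\,\d s\,\Big|\,\mathscr{F}_r\Big)\le M<\infty,\\
		&\mathbb{E}\Big(\int_{T'}^\infty|\Phi_s\Phi_r^{-1}|^2\,\d s\,\Big|\,\mathscr{F}_r\Big)\to 0 \ \text{as}\ T'\to\infty\ \text{for each fixed }r,
	\end{aligned}
\end{equation*}
which I would prove by chaining over the periods $[m\tau,(m+1)\tau)$: Lemma \ref{time shift property of homogeneous system} gives $\Phi_{u+m\tau}\Phi_{m\tau}^{-1}=\theta_{m\tau}\circ\Phi_u$ and its independence of $\mathscr{F}_{m\tau}$, the first line of \eqref{stability condition} gives $\mathbb{E}|\Phi_u|^2\le\beta e^{-\lambda u}$, and the contribution of the initial period is handled by routine moment bounds for the linear SDE \eqref{homogeneous system} (and for its inverse flow) with bounded coefficients. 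Granting this, \eqref{prop-rep} gives $\|K^{(T)}_t\|_{L^\infty(\Omega)}\le M\|\Lambda\|_{L^\infty}$ uniformly in $t,T$; moreover, for $T>T'>N$, \eqref{prop-rep} yields $K^{(T)}_N-K^{(T')}_N=\mathbb{E}\big(\int_{T'}^{T}(\Phi_s\Phi_N^{-1})^\top\Lambda_s\Phi_s\Phi_N^{-1}\,\d s\,\big|\,\mathscr{F}_N\big)$, whose $L^\infty$-norm tends to $0$ as $T'\to\infty$. Since $(K^{(T)}-K^{(T')},L^{(T)}-L^{(T')})$ solves on $[0,N]$ the $\Lambda$-free version of \eqref{linear BSDE equation} with terminal value $K^{(T)}_N-K^{(T')}_N$, the standard a priori estimate for this linear BSDE makes $(K^{(T)},L^{(T)})$ Cauchy in $S_{\mathbb{F}}^2(0,N;\mathbb{S}^n)\times L_{\mathbb{F}}^2(0,N;\mathbb{S}^n)$ for every $N$. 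Passing to the limit produces $(K,L)\in L_{\mathbb{F}}^\infty(0,\infty;\mathbb{S}^n)\times L_{\mathbb{F}}^{2,loc}(0,\infty;\mathbb{S}^n)$ solving \eqref{linear BSDE equation}, with the representation $K_t=\mathbb{E}\big(\int_t^\infty(\Phi_s\Phi_t^{-1})^\top\Lambda_s\Phi_s\Phi_t^{-1}\,\d s\,\big|\,\mathscr{F}_t\big)$.

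Uniqueness among bounded solutions again follows from \eqref{prop-rep}: the difference $(\delta K,\delta L)$ of two bounded solutions solves the $\Lambda$-free equation, hence $\delta K_t=\mathbb{E}\big((\Phi_T\Phi_t^{-1})^\top\delta K_T\,\Phi_T\Phi_t^{-1}\,\big|\,\mathscr{F}_t\big)$ and $|\delta K_t|\le\|\delta K_T\|_{L^\infty}\,\mathbb{E}\big(|\Phi_T\Phi_t^{-1}|^2\,\big|\,\mathscr{F}_t\big)\to0$ as $T\to\infty$ by the same chaining bound, so $\delta K\equiv0$ and then $\delta L\equiv0$. For $\tau$-random periodicity I would compare $(K_{t+\tau},L_{t+\tau})_{t\ge0}$ with $(\theta_\tau\circ K_t,\theta_\tau\circ L_t)_{t\ge0}$: after substituting $s\mapsto s+\tau$, using $\d W_{s+\tau}=\d(\theta_\tau\circ W_s)$ and the $\tau$-periodicity of $A,C,\Lambda$, both are bounded solutions of \eqref{linear BSDE equation} with coefficients $\theta_\tau\circ A,\theta_\tau\circ C,\theta_\tau\circ\Lambda$ driven by the Brownian motion $\theta_\tau\circ W$, and both are adapted to $\{\mathscr{F}_{t+\tau}\}_{t\ge0}$. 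By Lemma \ref{time shift property of homogeneous system} the homogeneous flow of this shifted system is $\theta_\tau\circ\Phi=\Phi_{\cdot+\tau}\Phi_\tau^{-1}$, which still satisfies \eqref{stability condition} (because $\mathbb{P}\cdot\theta_\tau^{-1}=\mathbb{P}$, and conditioning an $\mathscr{F}_{\tau,\infty}^W$-measurable variable on $\mathscr{F}_{r+\tau}$ reduces to conditioning on $\mathscr{F}_{\tau,r+\tau}^W$, which commutes with the shift); so the uniqueness just proved applies to the shifted equation and forces $K_{t+\tau}=\theta_\tau\circ K_t$, $L_{t+\tau}=\theta_\tau\circ L_t$ a.s. Thus $(K,L)$ is $\tau$-random periodic and $K\in\mathscr{B}_\tau(\mathbb{S}^n)$.

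Finally, the sign statements are immediate from $K_t=\mathbb{E}\big(\int_t^\infty(\Phi_s\Phi_t^{-1})^\top\Lambda_s\Phi_s\Phi_t^{-1}\,\d s\,\big|\,\mathscr{F}_t\big)$. If $\Lambda_s\ge0$, the integrand is positive semidefinite, so $K_t\ge0$ and $K\in\mathscr{B}_\tau(\bar{\mathbb{S}}_+^n)$. If $\Lambda_s\ge\alpha I_n$ for some $\alpha>0$, then $K_t\ge\alpha\,\mathbb{E}\big(\int_t^\infty(\Phi_s\Phi_t^{-1})^\top\Phi_s\Phi_t^{-1}\,\d s\,\big|\,\mathscr{F}_t\big)\ge\alpha\delta I_n$ for $t\in[0,\tau)$ by the second line of \eqref{stability condition}, and for general $t=t'+k\tau$ with $t'\in[0,\tau)$ the $\tau$-random periodicity gives $K_t=\theta_{k\tau}\circ K_{t'}\ge\alpha\delta I_n$; hence $K\in\mathscr{B}_\tau(\mathbb{S}_+^n)$ is uniformly positive definite.
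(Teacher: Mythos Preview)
Your argument is correct, but it follows a genuinely different route from the paper. The paper never sends a terminal time $T\to\infty$; instead it works on a single period $[0,\tau]$ and determines the terminal value by a fixed-point argument. Concretely, since any $\tau$-random periodic solution has $K_0$ deterministic (it is $\mathscr{F}_0$-measurable) and $K_\tau=\theta_\tau\circ K_0=K_0$, the paper seeks $M\in\mathbb{S}^n$ with $M=G+\mathbb{E}[\Phi_\tau^\top M\Phi_\tau]$ where $G=\mathbb{E}\int_0^\tau\Phi_s^\top\Lambda_s\Phi_s\,\d s$; iterating gives $M_N=\sum_{k=0}^N\mathbb{E}[\Phi_{k\tau}^\top G\Phi_{k\tau}]$, which converges by \eqref{stability condition}. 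With that $M$ as terminal datum the finite-horizon BSDE on $[0,\tau]$ produces $(K^*,L^*)$, and the global solution is \emph{defined} by periodic shifts $K_t=\theta_{k\tau}\circ K^*_{t-k\tau}$. Your representation $K_r=\mathbb{E}\big(\int_r^\infty(\Phi_s\Phi_r^{-1})^\top\Lambda_s\Phi_s\Phi_r^{-1}\,\d s\,\big|\,\mathscr{F}_r\big)$ then emerges as a computation, not as a limit.

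The trade-offs are real. The paper avoids the whole Cauchy-in-$T$ machinery and the conditional tail estimate you flag as the main obstacle; it only needs the unconditional bound $\mathbb{E}|\Phi_{k\tau}|^2\le\beta e^{-\lambda k\tau}$ to make the series for $M$ converge, and periodicity is baked in rather than recovered from uniqueness. Your approach, on the other hand, is the more standard infinite-horizon construction and yields uniqueness among all bounded solutions (not just periodic ones) directly from \eqref{prop-rep}; the periodicity-via-uniqueness step, comparing $(K_{\cdot+\tau},L_{\cdot+\tau})$ with $(\theta_\tau\circ K,\theta_\tau\circ L)$, is clean and reusable. Both arrive at the same representation formula and hence the same positivity conclusions via the second line of \eqref{stability condition}.
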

\begin{proof}
	For any $K^*_{\tau}\in L_{\mathscr{F}_\tau}^\infty(\Omega;\mathbb{S}^n)$, by Lemma 5.2 in Peng \cite{pe}, BSDE (\ref{linear BSDE equation}) admits a unique solution $(K^*,L^*)\in L_{\mathbb{F}}^\infty(0,\tau;\mathbb{S}^n)\times L_{\mathbb{F}}^2(0,\tau;\mathbb{S}^n)$.
	
Let $\Phi$ be the solution to \eqref{homogeneous system}. Applying It\^{o} formula to $\Phi_s^\top K^*_s\Phi_s$, we have
	\begin{equation*}
		\begin{aligned}
			K^*_0=\mathbb{E}\Big(\int_0^\tau\Phi_s^\top\Lambda_s\Phi_s\d s+\Phi_{\tau}^\top K^*_{\tau}\Phi_{\tau}\Big).
		\end{aligned}
	\end{equation*}
	Our target is to show there exist a unique $M\in\mathbb{R}^{n\times n}$ such that
	\begin{equation*}
		\begin{aligned}
			M=\mathbb{E}\Big(\int_0^\tau\Phi_s^\top\Lambda_s\Phi_s\d s+\Phi_{\tau}^\top M\Phi_{\tau}\Big).
		\end{aligned}
	\end{equation*}
For this, let $M_0=G\triangleq\mathbb{E}\Big(\int_0^\tau\Phi_s^\top\Lambda_s\Phi_s\d s\Big)$, and for any $k\in\mathbb{N}^+$, define
	\begin{equation*}
		\begin{aligned}
			M_k=G+\mathbb{E}\Big(\Phi_{\tau}^\top M_{k-1}\Phi_{\tau}\Big).
		\end{aligned}
	\end{equation*}
	By Lemma \ref{time shift property of homogeneous system}
	\begin{equation}\label{shift property}
		\begin{aligned}
			&\Phi_{2\tau}\Phi_{\tau}^{-1}=\theta_\tau\circ \Phi_{\tau}\ \ {\text a.s.},\\
			&\Phi_{2\tau}\Phi_{\tau}^{-1} \text{ and } \Phi_{\tau} \text{ are independent}.
		\end{aligned}
	\end{equation}
So the measure preserving property of $\theta_\tau$ leads to
	\begin{equation*}
		\begin{aligned}
			M_k=&G+\mathbb{E}\Big(\Phi_{\tau}^\top M_{k-1}\Phi_{\tau}\Big)\\
			=&G+\mathbb{E}\Big(\Phi_{\tau}^\top G\Phi_{\tau}\Big)+\mathbb{E}\Big(\Phi_{\tau}^\top\mathbb{E}\Big(\Phi_{\tau}^\top M_{k-2}\Phi_{\tau}\Big)\Phi_{\tau}\Big)\\
			=&G+\mathbb{E}\Big(\Phi_{\tau}^\top G\Phi_{\tau}\Big)+\mathbb{E}\Big(\Phi_{\tau}^\top\mathbb{E}\Big((\Phi_{2\tau}\Phi_{\tau}^{-1})^\top M_{k-2}\Phi_{2\tau}\Phi_{\tau}^{-1}\Big)\Phi_{\tau}\Big)\\
			=&G+\mathbb{E}\Big(\Phi_{\tau}^\top G\Phi_{\tau}\Big)+\mathbb{E}\Big(\Phi_{2\tau}^\top M_{k-2}\Phi_{2\tau}\Big).
		\end{aligned}
	\end{equation*}
	By induction, we get
$M_N=\sum_{k=0}^{N}\mathbb{E}\Big(\Phi_{k\tau}^\top G\Phi_{k\tau}\Big)$.
	Since $[A,C]$ is $\tau$-random periodic mean-square exponentially stable, there exist $\beta,\lambda>0$ such that for $k\in\mathbb{N}$,
	\begin{equation}\label{stable ktau}
		\begin{aligned}
			\mathbb{E}\Big(\text{tr}(\Phi_{k\tau}^\top\Phi_{k\tau})\Big)\le \beta e^{-\lambda k\tau}.
		\end{aligned}
	\end{equation}
	By the monotone convergence theorem, there exists $M\in \mathbb{R}^{n\times n}$ such that $M_N\rightarrow M$ as $N\rightarrow\infty$. Moreover, if $\Lambda\ge 0$, then $M\ge G\ge0$.
	
	Applying It\^{o} formula to $\Phi_s^\top K^*_s\Phi_s$ again, for any $r\in[0,\tau)$, we have
	\begin{equation*}
		\begin{aligned}
			K^*_r=\mathbb{E}\Big(\int_r^\tau(\Phi_s\Phi_r^{-1})^\top\Lambda_s\Phi_s\Phi_r^{-1}\d s+(\Phi_{\tau}\Phi_r^{-1})^\top M\Phi_{\tau}\Phi_r^{-1}\Big|\mathscr{F}_r\Big).
		\end{aligned}
	\end{equation*}
	Thus for any $r\in[0,\tau)$, $K^*_r\ge0$ a.s.
	
	In general, for any $k\in\mathbb{N}$, $t\in(k\tau,(k+1)\tau]$, define
	\begin{equation*}
		\begin{aligned}
			K_t=\theta_{k\tau}\circ K^*_{t-k\tau},\ \ L_t=\theta_{k\tau}\circ L^*_{t-k\tau}.
		\end{aligned}
	\end{equation*}
Since $A,C,\Lambda$ are $\tau$-random periodic, $(K,L)\in L_{\mathbb{F}}^\infty(0,\infty;\mathbb{S}^n)\times L_{\mathbb{F}}^{2,loc}(0,\infty;\mathbb{S}^n)$ satisfies \eqref{linear BSDE equation}.
	Hence for any $t\ge 0$,
	\begin{equation*}
		\begin{aligned}
			&K_{t+\tau}=\theta_{\tau}\circ K_t,\ \ L_{t+\tau}=\theta_{\tau}\circ L_t,\\
			&K_t\ge 0\ \ {\text a.s.}
		\end{aligned}
	\end{equation*}
	By Lemma \ref{time shift property of homogeneous system}, for any $t\ge 0$, $k\in\mathbb{N}$,
	\begin{equation*}
		\begin{aligned}
			&\Phi_{t+k\tau}\Phi_{k\tau}^{-1}=\theta_{k\tau}\circ\Phi_t\ \ {\text a.s.},\\
			&\Phi_{t+k\tau}\Phi_{k\tau}^{-1} \text{ and } \Phi_{k\tau} \text{ are independent}.
		\end{aligned}
	\end{equation*}
	Thus
	\begin{equation*}
		\begin{aligned}
			M=&\sum_{k=0}^{\infty}\mathbb{E}\Big(\Phi_{k\tau}^\top G\Phi_{k\tau}\Big)\\
			=&\sum_{k=0}^{\infty}\mathbb{E}\Big(\Phi_{k\tau}^\top\mathbb{E}\Big(\int_0^\tau\Phi_s^\top\Lambda_s\Phi_s\d s\Big)\Phi_{k\tau}\Big)\\
			=&\sum_{k=0}^{\infty}\mathbb{E}\Big(\Phi_{k\tau}^\top\mathbb{E}\Big(\int_0^\tau\theta_{k\tau}\circ\Phi_s^\top\theta_{k\tau}\circ\Lambda_s\theta_{k\tau}\circ\Phi_s\d s\Big)\Phi_{k\tau}\Big)\\
			=&\sum_{k=0}^{\infty}\mathbb{E}\Big(\Phi_{k\tau}^\top\mathbb{E}\Big(\int_0^\tau(\Phi_{s+k\tau}\Phi_{k\tau}^{-1})^\top\Lambda_{s+k\tau}\Phi_{s+k\tau}\Phi_{k\tau}^{-1}\d s\Big)\Phi_{k\tau}\Big)\\
			=&\sum_{k=0}^{\infty}\mathbb{E}\Big(\int_0^\tau\Phi_{s+k\tau}^\top\Lambda_{s+k\tau}\Phi_{s+k\tau}\d s\Big)\\
			=&\mathbb{E}\Big(\int_0^{\infty}\Phi_s^\top\Lambda_s\Phi_s\d s\Big).
		\end{aligned}
	\end{equation*}
	Then for any $r\in[0,\tau)$, we have
	\begin{equation}\label{linear BSDE formula}
		\begin{aligned}
			K_r=&\mathbb{E}\Big(\int_r^\tau(\Phi_s\Phi_r^{-1})^\top\Lambda_s\Phi_s\Phi_r^{-1}\d s+(\Phi_\tau\Phi_r^{-1})^\top M\Phi_\tau\Phi_r^{-1}\Big|\mathscr{F}_r\Big)\\
			=&\mathbb{E}\Big(\int_r^\infty(\Phi_s\Phi_r^{-1})^\top\Lambda_s\Phi_s\Phi_r^{-1}\d s\Big|\mathscr{F}_r\Big).
		\end{aligned}
	\end{equation}
	If there exists $\alpha>0$ such that for any $t\ge 0$, $\Lambda_t\ge\alpha I_n$ a.s., then
	\begin{equation*}
		\begin{aligned}
			K_r\ge\alpha\mathbb{E}\Big(\int_r^\infty(\Phi_s\Phi_r^{-1})^\top\Phi_s\Phi_r^{-1}\d s\Big|\mathscr{F}_r\Big)\ge\alpha\delta I_n.
		\end{aligned}
	\end{equation*}
	Thus for any $r\in[0,\tau)$, $k\in\mathbb{N}$, $K_{k\tau+r}=\theta_{k\tau}\circ K_r\ge\alpha\delta I_n$ a.s.,
	which puts the end of the proof.
\end{proof}
\begin{prop}\label{stable condition characterization} For a given $\tau>0$, assume that $A,C\in\mathscr{B}_\tau(\mathbb{R}^{n\times n})$, $\Lambda\in\mathscr{B}_\tau(\mathbb{S}_+^n)$ is  uniformly positive definite, BSDE \eqref{linear BSDE equation} admits a unique $\tau$-random periodic solution $(K,L)\in L_{\mathbb{F}}^\infty(0,\infty;\mathbb{S}_+^n)\times L_{\mathbb{F}}^{2,loc}(0,\infty;\mathbb{S}^n)$, and $K\in\mathscr{B}_\tau(\mathbb{S}_+^n)$ is uniformly positive definite. Then $[A,C]$ is $\tau$-random periodic mean-square exponentially stable.
\end{prop}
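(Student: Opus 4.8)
The plan is to use the given $\tau$-random periodic solution $(K,L)$ of \eqref{linear BSDE equation} as a stochastic Lyapunov functional for the homogeneous system \eqref{homogeneous system}, and to extract from it, in turn, the global square integrability of $\Phi$, its mean-square exponential decay, and the lower bound in \eqref{stability condition}. Since $A,C$ are bounded, \eqref{homogeneous system} has a unique solution $\Phi$ on every finite interval, and both $\Phi$ and $\Phi^{-1}$ (which also solves a linear SDE with coefficients bounded in terms of $\|A\|_\infty$ and $\|C\|_\infty$) lie in $S_{\mathbb{F}}^2(0,T;\mathbb{R}^{n\times n})$ with $\mathbb{E}|\Phi_t|^2\vee\mathbb{E}|\Phi_t^{-1}|^2\le n e^{ct}$, where throughout $c>0$ denotes a constant depending only on $\|A\|_\infty$ and $\|C\|_\infty$. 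Applying It\^o's formula to $\Phi_s^\top K_s\Phi_s$ and substituting the drift of $K$ from \eqref{linear BSDE equation}, all the drift terms cancel except $-\Phi_s^\top\Lambda_s\Phi_s$, so that $\d(\Phi_s^\top K_s\Phi_s)=-\Phi_s^\top\Lambda_s\Phi_s\,\d s+(\,\cdots)\,\d W_s$. A standard localization argument, together with $K_t\ge 0$, then gives $\mathbb{E}\int_0^t\Phi_s^\top\Lambda_s\Phi_s\,\d s\le K_0$ for every $t\ge 0$; since $\Lambda$ is uniformly positive definite, say $\Lambda_s\ge\alpha_\Lambda I_n$ with $\alpha_\Lambda>0$, taking traces yields $\mathbb{E}\int_0^\infty|\Phi_s|^2\,\d s\le\text{tr}(K_0)/\alpha_\Lambda<\infty$, i.e.\ $\Phi\in L_{\mathbb{F}}^2(0,\infty;\mathbb{R}^{n\times n})$.

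Next I would prove the exponential decay. Put $a_k:=\mathbb{E}|\Phi_{k\tau}|^2$. By Lemma \ref{time shift property of homogeneous system}, $\Phi_{(k+m)\tau}\Phi_{k\tau}^{-1}=\theta_{k\tau}\circ\Phi_{m\tau}$ is independent of $\Phi_{k\tau}$ and $\theta_{k\tau}$ preserves $\mathbb{P}$, so submultiplicativity of the Frobenius norm gives $a_{k+m}\le a_k a_m$. Conversely, writing $u=v+k\tau$ with $v\in[0,\tau)$, Lemma \ref{time shift property of homogeneous system} gives $|\Phi_{k\tau}|\le|(\theta_{k\tau}\circ\Phi_v)^{-1}|\,|\Phi_{v+k\tau}|$ with $\theta_{k\tau}\circ\Phi_v$ independent of $\Phi_{k\tau}$, hence
\begin{equation*}
\mathbb{E}|\Phi_{v+k\tau}|^2\ge\mathbb{E}\big[\,|\Phi_{k\tau}|^2\,|(\theta_{k\tau}\circ\Phi_v)^{-1}|^{-2}\,\big]=a_k\,\mathbb{E}\big[|\Phi_v^{-1}|^{-2}\big]\ge a_k/(n e^{c\tau}),
\end{equation*}
using the measure preservation of $\theta_{k\tau}$, Jensen's inequality, and the bound on $\mathbb{E}|\Phi_v^{-1}|^2$. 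Integrating over $v\in[0,\tau)$ gives $a_k\le\frac{n e^{c\tau}}{\tau}\,\mathbb{E}\int_{k\tau}^{(k+1)\tau}|\Phi_s|^2\,\d s$, whose right-hand sides are summable by the first step, so $a_k\to 0$. Thus $a_N<1$ for some $N$, and submultiplicativity gives $a_k\le C\rho^k$ with $\rho:=a_N^{1/N}\in(0,1)$ and some constant $C$. Finally, for $t=k\tau+s$ with $k\in\mathbb{N}$ and $s\in[0,\tau)$, Lemma \ref{time shift property of homogeneous system} gives $\Phi_t=(\theta_{k\tau}\circ\Phi_s)\Phi_{k\tau}$ with independent factors, whence $\mathbb{E}|\Phi_t|^2\le\big(\sup_{0\le s\le\tau}\mathbb{E}|\Phi_s|^2\big)a_k\le\beta e^{-\lambda t}$ with $\lambda:=-\tau^{-1}\ln\rho>0$ and a suitable $\beta$, which is the first inequality in \eqref{stability condition}.

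For the second inequality, fix $r\in[0,\tau)$ and set $\Psi_s:=\Phi_s\Phi_r^{-1}$ for $s\ge r$, which solves \eqref{homogeneous system} on $[r,\infty)$ with $\Psi_r=I_n$. Conditioning the It\^o identity of the first step on $\mathscr{F}_r$ (again with localization) gives, for $T>r$,
\begin{equation*}
K_r=\mathbb{E}\Big[\int_r^T\Psi_s^\top\Lambda_s\Psi_s\,\d s\,\Big|\,\mathscr{F}_r\Big]+\mathbb{E}\big[\Psi_T^\top K_T\Psi_T\,\big|\,\mathscr{F}_r\big],
\end{equation*}
with both summands in $\bar{\mathbb{S}}_{+}^n$; hence the first one is $\le K_r$ and increases, as $T\to\infty$, to $\mathbb{E}\big[\int_r^\infty\Psi_s^\top\Lambda_s\Psi_s\,\d s\,\big|\,\mathscr{F}_r\big]\le K_r$, while the second converges. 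To see that its limit is $0$, factor $\Psi_T=(\Phi_T\Phi_\tau^{-1})(\Phi_\tau\Phi_r^{-1})$ for $T>\tau$: by Lemma \ref{time shift property of homogeneous system} the first factor is independent of $\mathscr{F}_\tau\supseteq\mathscr{F}_r$ with $\mathbb{E}|\Phi_T\Phi_\tau^{-1}|^2=\mathbb{E}|\Phi_{T-\tau}|^2\le\beta e^{-\lambda(T-\tau)}$, while a conditional Gronwall estimate gives $\mathbb{E}\big[|\Phi_\tau\Phi_r^{-1}|^2\,\big|\,\mathscr{F}_r\big]\le n e^{c\tau}$ since this flow starts from $I_n$; the tower property then yields $\mathbb{E}\big[|\Psi_T|^2\,\big|\,\mathscr{F}_r\big]\le n e^{c\tau}\beta e^{-\lambda(T-\tau)}\to 0$ uniformly in $r\in[0,\tau)$, and since $0\le\Psi_T^\top K_T\Psi_T\le\|K\|_\infty\Psi_T^\top\Psi_T$, the boundary term indeed vanishes. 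Therefore $K_r=\mathbb{E}\big[\int_r^\infty\Psi_s^\top\Lambda_s\Psi_s\,\d s\,\big|\,\mathscr{F}_r\big]\le\|\Lambda\|_\infty\,\mathbb{E}\big[\int_r^\infty(\Phi_s\Phi_r^{-1})^\top\Phi_s\Phi_r^{-1}\,\d s\,\big|\,\mathscr{F}_r\big]$, and since $K$ is uniformly positive definite, $K_r\ge\alpha_K I_n$ for some $\alpha_K>0$, so \eqref{stability condition} holds with $\delta:=\alpha_K/\|\Lambda\|_\infty$.

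The step I expect to be the main obstacle is the passage from $\Phi\in L_{\mathbb{F}}^2(0,\infty;\mathbb{R}^{n\times n})$ to genuine exponential decay: square integrability by itself does not control the behaviour of $t\mapsto\mathbb{E}|\Phi_t|^2$ inside a period, so one has to use the cocycle structure of Lemma \ref{time shift property of homogeneous system} both for the submultiplicativity of $(a_k)$ and---less routinely---for the reverse bound of $a_k$ by the $L^2$-mass of $\Phi$ over the $k$-th period, which I carry out through the inverse flow $\Phi^{-1}$ and Jensen's inequality. A secondary technical point is the uniform-in-$r$ vanishing of the boundary term $\mathbb{E}[\Psi_T^\top K_T\Psi_T\,|\,\mathscr{F}_r]$: this does not follow from independence directly, because for $r\in(0,\tau)$ the flow $\Phi_s\Phi_r^{-1}$ is not independent of $\mathscr{F}_r$, and I handle it by factoring the flow through time $\tau$.
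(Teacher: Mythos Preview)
Your argument is correct, but the route you take to mean-square exponential decay is considerably longer than necessary. The paper obtains it in two lines: from the same It\^o identity $\d\,\mathbb{E}[\Phi_t^\top K_t\Phi_t]=-\mathbb{E}[\Phi_t^\top\Lambda_t\Phi_t]\,\d t$ you derive, it uses \emph{both} the uniform lower bound $\Lambda_t\ge\alpha_\Lambda I_n$ and the uniform upper bound $K_t\le\|K\|_\infty I_n$ simultaneously to get
\[
\frac{\d}{\d t}\,\text{tr}\,\mathbb{E}\big[\Phi_t^\top K_t\Phi_t\big]
\le -\alpha_\Lambda\,\text{tr}\,\mathbb{E}\big[\Phi_t^\top\Phi_t\big]
\le -\frac{\alpha_\Lambda}{\|K\|_\infty}\,\text{tr}\,\mathbb{E}\big[\Phi_t^\top K_t\Phi_t\big],
\]
and then Gronwall plus the lower bound $K_t\ge\alpha_K I_n$ give $\mathbb{E}|\Phi_t|^2\le\beta e^{-\lambda t}$ directly. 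You instead extract only $\Phi\in L_{\mathbb{F}}^2(0,\infty)$ from the Lyapunov identity, and then recover exponential decay by a separate submultiplicativity argument on $a_k=\mathbb{E}|\Phi_{k\tau}|^2$ together with a reverse estimate through $\Phi^{-1}$ and Jensen. This is valid and self-contained, but it bypasses the structural fact that $K$ is bounded above, which is precisely what closes the Gronwall loop.

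For the second inequality in \eqref{stability condition} your treatment is essentially the same as the paper's---both rest on the representation $K_r=\mathbb{E}\big[\int_r^\infty\Psi_s^\top\Lambda_s\Psi_s\,\d s\,\big|\,\mathscr{F}_r\big]$ and the bound $\Psi_s^\top\Lambda_s\Psi_s\le\|\Lambda\|_\infty\,\Psi_s^\top\Psi_s$---but you are more careful than the paper in justifying this representation: the paper simply quotes \eqref{linear BSDE formula}, whose derivation in Proposition~\ref{linear BSDE} already assumed stability, whereas you re-derive it here by explicitly showing the boundary term $\mathbb{E}[\Psi_T^\top K_T\Psi_T\,|\,\mathscr{F}_r]\to 0$ via the factorization through time $\tau$ and the exponential decay just established. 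That extra care is a genuine advantage of your write-up.
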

\begin{proof}Let $\Phi_t$ be the solution to \eqref{homogeneous system}. Applying It\^{o} formula to $\Phi_t^\top K_t\Phi_t$, we have
	\begin{equation*}
		\begin{aligned}
			\frac{\d}{\d t}\mathbb{E}\Big(\Phi_t^\top K_t\Phi_t\Big)=-\mathbb{E}\Big(\Phi_t^\top \Lambda_t\Phi_t\Big).
		\end{aligned}
	\end{equation*}
	Since $\Lambda, K\in\mathscr{B}_\tau(\mathbb{S}_+^n)$ are uniformly positive definite, there exist $\delta,\alpha>0$, such that for any $t\ge 0$,
	\begin{equation*}
		\begin{aligned}
			\delta I_n\le\Lambda_t,\ \ K_t\le\alpha I_n\ \ {\text a.s.}
		\end{aligned}
	\end{equation*}
	Then
	\begin{equation*}
		\begin{aligned}
			\frac{\d}{\d t}\mathbb{E}\Big(\Phi_t^\top K_t\Phi_t\Big)&\le-\delta\mathbb{E}\Big(\Phi_t^\top\Phi_t\Big)\le-\frac{\delta}{\alpha}\mathbb{E}\Big(\Phi_t^\top K_t\Phi_t\Big).
		\end{aligned}
	\end{equation*}
	By Gronwall inequality, for any $t\ge0$,
	\begin{equation*}
		\begin{aligned}
			\mathbb{E}\text{tr}\Big(\Phi_t^\top\Phi_t\Big)\le\frac{1}{\delta}\mathbb{E}\text{tr}\Big(\Phi_t^\top K_t\Phi_t\Big)\le\frac{\alpha}{\delta}e^{-\frac{\delta}{\alpha}k\tau}.
		\end{aligned}
	\end{equation*}
Noticing \eqref{linear BSDE formula}, we have for any $r\in [0,\tau)$,
	\begin{equation*}
		\begin{aligned}
			\mathbb{E}\Big(\int_r^\infty(\Phi_s\Phi_r^{-1})^\top\Phi_s\Phi_r^{-1}\d s\Big|\mathscr{F}_r\Big)
			\ge&\frac{1}{\alpha}\mathbb{E}\Big(\int_r^\infty(\Phi_s\Phi_r^{-1})^\top\Lambda_s\Phi_s\Phi_r^{-1}\d s\Big|\mathscr{F}_r\Big)\\
			=&\frac{1}{\alpha}K_r
			\ge\frac{\delta}{\alpha}I_n.
		\end{aligned}
	\end{equation*}
The desired results is derived.
\end{proof}

Based on the equivalent characterization of the
random periodic mean-square exponentially stable condition, we further have the well-posedness and periodicity of the solution to the dual equation of closed-loop state equation.
\begin{prop}\label{linear one order BSDE} For a given $\tau>0$, assume
that $[A,C]$ is $\tau$-random periodic mean-square exponentially stable and $(K,L)\in L_{\mathbb{F}}^\infty(0,\infty;\mathbb{S}^n)\times L_{\mathbb{F}}^{2,loc}(0,\infty;\mathbb{S}^n)$ is the solution to \eqref{linear BSDE equation}. Then for any $b,\sigma,\lambda\in\mathscr{B}_\tau(\mathbb{R}^n)$, BSDE
	\begin{equation}\label{linear one order BSDE equation}
		\begin{aligned}
			\d \eta_t=-\big(A_t^\top\eta_t+C_t^\top\zeta_t+K_tb_t+C_t^\top K_t\sigma_t+L_t\sigma_t+\lambda_t\big)\d t+\zeta_t\d W_t
		\end{aligned}
	\end{equation}
admits a unique $\tau$-random periodic solution $(\eta,\zeta)\in S_{\mathbb{F}}^2(0,\infty;\mathbb{R}^n)\times L_{\mathbb{F}}^{2,loc}(0,\infty;\mathbb{R}^n)$.
\end{prop}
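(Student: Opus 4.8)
The plan is to follow the scheme of the proof of Proposition \ref{linear BSDE}: solve \eqref{linear one order BSDE equation} on the single interval $[0,\tau]$ with a deterministic terminal value, choose that terminal value so the solution closes up periodically, and extend the result to $[0,\infty)$ by the shifts $\theta_{k\tau}$. Write $\phi_t:=K_tb_t+C_t^\top K_t\sigma_t+L_t\sigma_t+\lambda_t$ for the free term of \eqref{linear one order BSDE equation}. Since $K\in L_{\mathbb F}^\infty(0,\infty;\mathbb S^n)$, $L\in L_{\mathbb F}^{2,loc}(0,\infty;\mathbb S^n)$ and $b,\sigma,\lambda,C\in\mathscr B_\tau$, and since $\theta_\tau\circ$ commutes (for each fixed $\omega$) with pointwise products and matrix multiplication, $\phi$ is $\mathbb F$-progressively measurable, $\tau$-random periodic, and belongs to $L_{\mathbb F}^2(0,\tau;\mathbb R^n)$. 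Hence \eqref{linear one order BSDE equation} is a linear BSDE on $[0,\tau]$ with bounded coefficients $A^\top,C^\top$ and square-integrable free term, so by standard linear BSDE theory it has, for every $\xi\in L_{\mathscr F_\tau}^2(\Omega;\mathbb R^n)$, a unique solution $(\eta^*,\zeta^*)\in S_{\mathbb F}^2(0,\tau;\mathbb R^n)\times L_{\mathbb F}^2(0,\tau;\mathbb R^n)$ with $\eta^*_\tau=\xi$.

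Next I would pin down the right terminal value. Let $\Phi$ solve \eqref{homogeneous system}. Applying It\^o's formula to $\Phi_t^\top\eta^*_t$, the drift from $A^\top\eta^*$ cancels with the one carried by $\d\Phi^\top$ and the cross-variation cancels the $C^\top\zeta^*$ drift, so
\begin{equation*}
	\d(\Phi_t^\top\eta^*_t)=-\Phi_t^\top\phi_t\,\d t+\Phi_t^\top\big(C_t^\top\eta^*_t+\zeta^*_t\big)\,\d W_t,
\end{equation*}
whence, for any $r\in[0,\tau]$,
\begin{equation*}
	\eta^*_r=\mathbb E\Big(\int_r^\tau(\Phi_s\Phi_r^{-1})^\top\phi_s\,\d s+(\Phi_\tau\Phi_r^{-1})^\top\eta^*_\tau\ \Big|\ \mathscr F_r\Big).
\end{equation*}
For the periodic extension to be consistent we need $\eta^*_0=\eta^*_\tau$; since $\mathscr F_0$ is trivial this common value is a deterministic $m\in\mathbb R^n$, and taking $r=0$ turns the requirement into the fixed-point equation $m=g+\mathbb E(\Phi_\tau)^\top m$ with $g:=\mathbb E\big(\int_0^\tau\Phi_s^\top\phi_s\,\d s\big)$. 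I would solve this exactly as in Proposition \ref{linear BSDE}: set $m_0=g$, $m_k=g+\mathbb E(\Phi_\tau^\top m_{k-1})$, and use Lemma \ref{time shift property of homogeneous system} (the identity $\Phi_{t+k\tau}\Phi_{k\tau}^{-1}=\theta_{k\tau}\circ\Phi_t$ and the independence of $\Phi_{t+k\tau}\Phi_{k\tau}^{-1}$ from $\Phi_{k\tau}$) together with the $\tau$-random periodicity of $\phi$ and the measure-preserving property of $\theta_\tau$ to telescope this into $m_N=\sum_{k=0}^N\mathbb E(\Phi_{k\tau})^\top g$. By \eqref{stability condition}, $|\mathbb E\Phi_{k\tau}|\le(\mathbb E|\Phi_{k\tau}|^2)^{1/2}\le\sqrt\beta\,e^{-\lambda k\tau/2}$, so the series converges, $\mathbb E(\Phi_\tau)$ has spectral radius $<1$, $I-\mathbb E(\Phi_\tau)^\top$ is invertible, and $m:=\sum_{k\ge0}\mathbb E(\Phi_{k\tau})^\top g$ is the unique solution of the fixed-point equation; the same period-splitting computation as in Proposition \ref{linear BSDE} moreover gives $m=\mathbb E\big(\int_0^\infty\Phi_s^\top\phi_s\,\d s\big)$, the integral converging absolutely by Cauchy--Schwarz over successive periods and the exponential decay of $\mathbb E|\Phi_{k\tau}|^2$.

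Finally I would assemble the solution. Let $(\eta^*,\zeta^*)$ be the $[0,\tau]$-solution with $\eta^*_\tau=m$, so that $\eta^*_0=m$ by the previous step, and for $t\in(k\tau,(k+1)\tau]$ set $\eta_t:=\theta_{k\tau}\circ\eta^*_{t-k\tau}$, $\zeta_t:=\theta_{k\tau}\circ\zeta^*_{t-k\tau}$. Since $A,C,K,L,b,\sigma,\lambda$ are $\tau$-random periodic and $\theta_{k\tau}\circ W_t=W_{t+k\tau}-W_{k\tau}$, the shifted pair solves \eqref{linear one order BSDE equation} on each $(k\tau,(k+1)\tau]$, and the paths match at the junction times $k\tau$ precisely because $\eta^*_0=\eta^*_\tau=m$ and $\theta_{k\tau}\circ m=m$; hence $(\eta,\zeta)$ solves \eqref{linear one order BSDE equation} on $[0,\infty)$ and is $\tau$-random periodic by construction. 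The standard a priori estimate for the linear BSDE on $[k\tau,(k+1)\tau]$, combined with $\mathbb E\int_{k\tau}^{(k+1)\tau}|\phi_s|^2\,\d s=\mathbb E\int_0^\tau|\phi_s|^2\,\d s$, yields $\mathbb E\sup_{t\in[k\tau,(k+1)\tau]}|\eta_t|^2\le C(|m|^2+\mathbb E\int_0^\tau|\phi_s|^2\,\d s)$ with $C$ independent of $k$, which with the periodicity places $(\eta,\zeta)$ in $S_{\mathbb F}^2(0,\infty;\mathbb R^n)\times L_{\mathbb F}^{2,loc}(0,\infty;\mathbb R^n)$. For uniqueness, any $\tau$-random periodic solution restricted to $[0,\tau]$ has the deterministic terminal value $\eta_\tau=\theta_\tau\circ\eta_0=\eta_0$, which must solve the same fixed-point equation and is therefore equal to $m$; the $[0,\tau]$-BSDE then has a unique solution and its periodic extension is forced.

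I expect the terminal-value step to be the one needing genuine care: as in Proposition \ref{linear BSDE}, making the telescoping of $\{m_k\}$ rigorous calls for the careful use of the shift and independence in Lemma \ref{time shift property of homogeneous system} and of the $\tau$-random periodicity of $\phi$, and one must check that the stochastic integral produced by It\^o's formula applied to $\Phi_t^\top\eta^*_t$ is a true martingale (a routine localization using $\Phi\in S_{\mathbb F}^2(0,\tau)$ and the $S^2$-bound on $\eta^*$). The rest — solvability on $[0,\tau]$, the periodic gluing, and uniqueness — is routine and parallels the matrix case.
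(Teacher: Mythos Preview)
Your proposal is correct and follows essentially the same route as the paper's proof: solve \eqref{linear one order BSDE equation} on $[0,\tau]$, apply It\^o's formula to $\Phi_t^\top\eta^*_t$ to derive the fixed-point equation $m=h+\mathbb E(\Phi_\tau)^\top m$ for the deterministic terminal value, iterate and telescope via Lemma \ref{time shift property of homogeneous system} to obtain $m_N=\sum_{k=0}^N\mathbb E(\Phi_{k\tau})^\top h$, use the exponential decay in \eqref{stability condition} to get convergence, and extend periodically by $\theta_{k\tau}$. Your treatment is in fact slightly more thorough than the paper's --- you spell out the spectral-radius argument for uniqueness of $m$ (using $\mathbb E\Phi_{k\tau}=(\mathbb E\Phi_\tau)^k$), the junction matching $\eta^*_0=\eta^*_\tau=m$, and the uniform $S^2$ bound over periods --- whereas the paper leaves these implicit.
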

\begin{proof}For any $\eta^*_\tau\in L_{\mathscr{F}_\tau}^2(\Omega;\mathbb{R}^n)$, BSDE \eqref{linear one order BSDE equation} has a unique solution $(\eta^*,\zeta^*)\in S_{\mathbb{F}}^2(0,\infty;\mathbb{R}^n)\times L_{\mathbb{F}}^2(0,\infty;\mathbb{R}^n)$.

Let $\Phi_t$ be the solution to \eqref{homogeneous system}. Applying It\^{o} formula to $\Phi_s^\top\eta_s^*$, we have
	\begin{equation*}
		\begin{aligned}
			\eta^*_0=\mathbb{E}\Big(\int_0^\tau\Phi_s^\top(K_sb_s+C_s^\top K_s\sigma_s+L_s\sigma_s+\lambda_s)\d s+\Phi_\tau^\top\eta^*_\tau\Big).
		\end{aligned}
	\end{equation*}
	Our target is to show there exist a unique $g\in\mathbb{R}^n$ such that
	\begin{equation*}
		\begin{aligned}
			g=\mathbb{E}\Big(\int_0^\tau\Phi_s^\top(K_sb_s+C_s^\top K_s\sigma_s+L_s\sigma_s+\lambda_s)\d s+\Phi_\tau^\top g\Big).
		\end{aligned}
	\end{equation*}
For this, let $g_0=h\triangleq\mathbb{E}\Big(\int_0^\tau\Phi_s^\top(K_sb_s+C_s^\top K_s\sigma_s+L_s\sigma_s+\lambda_s)\d s\Big)$, and for any $k\in\mathbb{N}^+$, define
	\begin{equation*}
		\begin{aligned}
			g_k=h+\mathbb{E}\Big(\Phi_\tau^\top g_{k-1}\Big).
		\end{aligned}
	\end{equation*}
It follows from \eqref{shift property} and measure preserving property of $\theta_\tau$ that
	\begin{equation*}
		\begin{aligned}
			g_k=&h+\mathbb{E}\Big(\Phi_\tau^\top g_{k-1}\Big)\\
			=&h+\mathbb{E}\Big(\Phi_\tau^\top h\Big)+\mathbb{E}\Big(\Phi_\tau^\top\mathbb{E}\Big(\Phi_\tau^\top g_{k-2}\Big)\Big)\\
			=&h+\mathbb{E}\Big(\Phi_\tau^\top h\Big)+\mathbb{E}\Big(\Phi_\tau^\top\mathbb{E}\Big((\Phi_{2\tau}\Phi_\tau^{-1})^\top g_{k-2}\Big)\Big)\\
			=&h+\mathbb{E}\Big(\Phi_\tau^\top h\Big)+\mathbb{E}\Big(\Phi_{2\tau}^\top g_{k-2}\Big).
		\end{aligned}
	\end{equation*}
	By induction, we have
$g_N=\sum_{k=0}^{N}\mathbb{E}\Big(\Phi_{k\tau}^\top h\Big)$.
Hence by \eqref{stable ktau}, for any $N_1,N_2\in\mathbb{N}^+$ and $N_1<N_2$, it yields that
	\begin{equation*}
		\begin{aligned}
			\big|g_{N_2}-g_{N_1}\big|^2=&\text{tr}\Big(\mathbb{E}\big(\sum_{k=N_1+1}^{N_2}\Phi_{k\tau}^\top h\big)^\top\mathbb{E}\sum_{k=N_1+1}^{N_2}\Phi_{k\tau}^\top h\Big)\\
			\le&\sum_{k=N_1+1}^{N_2}\text{tr}\Big(hh^\top e^{\frac{\lambda k\tau}{2}}\mathbb{E}\Phi_{k\tau}\mathbb{E}\Phi_{k\tau}^\top\Big)\sum_{k=N_1+1}^{N_2}e^{-\frac{\lambda k\tau}{2}}\\
			\le&\sum_{k=N_1+1}^{N_2}\mathbb{E}\Bigg(\text{tr}\Big(hh^\top e^{\frac{\lambda k\tau}{2}}\Phi_{k\tau}\Phi_{k\tau}^\top\Big)\Bigg)\sum_{k=N_1+1}^{N_2}e^{-\frac{\lambda k\tau}{2}}\\
			\le&c\sum_{k=N_1+1}^{N_2}e^{-\frac{\lambda k\tau}{2}}\sum_{k=N_1+1}^{N_2}e^{-\frac{\lambda k\tau}{2}}\text{tr}\Big(hh^\top\Big)\\
			\le&ce^{-\lambda(N_1+1)\tau}(1-e^{-\frac{\lambda\tau}{2}})^{-2}\text{tr}\Big(hh^\top\Big).
		\end{aligned}
	\end{equation*}
Here and in the rest of this paper, $c$ is a generic constant which may change from line to line.	This shows that $\left\{g_N\right\}_{N\in\mathbb{N}^+}$ is a Cauchy sequence in $\mathbb{R}^n$, and thus there exists a $g\in\mathbb{R}^n$ such that $g_N\rightarrow g$ as $N\rightarrow\infty$.
	
	In general, for any $k\in\mathbb{N}$, $t\in(k\tau,(k+1)\tau]$, define
	\begin{equation*}
		\begin{aligned}
			\eta_t=\theta_{k\tau}\circ \eta^*_{t-k\tau},\zeta_t=\theta_{k\tau}\circ \zeta^*_{t-k\tau}.
		\end{aligned}
	\end{equation*}
Since $A,C,b,\sigma,\lambda,K,L$ are $\tau$-random periodic, $(\eta,\zeta)\in L_{\mathbb{F}}^\infty(0,\infty;\mathbb{R}^n)\times L_{\mathbb{F}}^{2,loc}(0,\infty;\mathbb{R}^n)$ satisfies \eqref{linear BSDE equation}.
Hence for any $t\ge 0$,
	\begin{equation*}
		\begin{aligned}
			&\eta_{t+\tau}=\theta_{\tau}\circ \eta_t,\ \ \zeta_{t+\tau}=\theta_{\tau}\circ \zeta_t.
		\end{aligned}
	\end{equation*}
	The proof is complete.
\end{proof}

\section{Random periodic solution to SDE}

With the help of Propositions \ref{linear BSDE} and \ref{linear one order BSDE}, we have some useful  estimates for a linear SDE.
\begin{prop}\label{priori estimate} For a given $\tau>0$, assume
	that $[A,C]$ is $\tau$-random periodic mean-square exponentially stable and $b,\sigma \in\mathscr{B}_\tau(\mathbb{R}^n)$. Then there exists $\beta,\lambda>0$ such that for any $r\in[0,\tau)$, SDE with the $\mathscr{F}_{r,\infty}^W$-independent initial state $\xi\in L_{\mathscr{F}_r}^2(\Omega;\mathbb{R}^n)$
	\begin{equation}\label{semi linear SDE}
		\left\{\begin{aligned}
			d X_t&=\big(A_tX_t+b_t\big)\d t+\big(C_tX_t+\sigma_t\big)\d W_t,\ \ \ t\ge r,\\
			X_r&=\xi
		\end{aligned}\right.
	\end{equation}
admits a unique solution $X^{r,\xi}\in L_{\mathbb{F}}^{2,loc}(r,\infty;\mathbb{R}^{n})$,	and for any $\mathscr{F}_{r,\infty}^W$-independent $\xi, \xi_1, \xi_2\in L_{\mathscr{F}_r}^2(\Omega;\mathbb{R}^n)$, $k\in\mathbb{N}$,
	\begin{equation}\label{delta square estimate}
		\begin{aligned}
			\mathbb{E}\big|X^{r,\xi_1}_{k\tau+r}-X^{r,\xi_2}_{k\tau+r}\big|^2\le\beta e^{-\lambda k\tau}\mathbb{E}|\xi_1-\xi_2|^2
		\end{aligned}
	\end{equation}
and
	\begin{equation}\label{square estimate}
		\begin{aligned}
			\mathbb{E}\big|X^{r,\xi}_{k\tau+r}\big|^2\le \beta(1+\mathbb{E}|\xi|^2).
		\end{aligned}
	\end{equation}
\end{prop}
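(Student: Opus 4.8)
The plan is to treat \eqref{semi linear SDE} as a linear SDE whose homogeneous part is governed by $\Phi$ from \eqref{homogeneous system}, and then to extract the exponential decay in \eqref{delta square estimate} and the uniform bound \eqref{square estimate} by writing the solution explicitly via the variation-of-constants formula and invoking the random periodic mean-square exponentially stable condition. First I would note that existence and uniqueness of $X^{r,\xi}\in L_{\mathbb{F}}^{2,loc}(r,\infty;\mathbb{R}^n)$ is standard for a linear SDE with coefficients in $\mathscr{B}_\tau$ (which are in particular bounded and progressively measurable), so the substance is the two estimates. For \eqref{delta square estimate} the key observation is that the difference $\Delta_t \triangleq X^{r,\xi_1}_t - X^{r,\xi_2}_t$ solves the \emph{homogeneous} system $\d\Delta_t = A_t\Delta_t\,\d t + C_t\Delta_t\,\d W_t$ with $\Delta_r = \xi_1-\xi_2$; hence $\Delta_{t} = \Phi_t\Phi_r^{-1}(\xi_1-\xi_2)$ for $t\ge r$. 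Using Lemma \ref{time shift property of homogeneous system}, $\Phi_{k\tau+r}\Phi_r^{-1} = \theta_{?}\circ(\cdots)$ — more carefully, I would write $\Phi_{k\tau+r}\Phi_r^{-1}$ in terms of shifts so that the $\mathscr{F}_{r,\infty}^W$-independence of $\xi_1-\xi_2$ lets me factor the expectation:
\begin{equation*}
	\begin{aligned}
		\mathbb{E}\big|\Phi_{k\tau+r}\Phi_r^{-1}(\xi_1-\xi_2)\big|^2 \le \mathbb{E}\big|\Phi_{k\tau+r}\Phi_r^{-1}\big|^2\,\mathbb{E}|\xi_1-\xi_2|^2,
	\end{aligned}
\end{equation*}
after which it remains to bound $\mathbb{E}\big|\Phi_{k\tau+r}\Phi_r^{-1}\big|^2$ by $\beta e^{-\lambda k\tau}$ uniformly in $r\in[0,\tau)$. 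This last bound follows by combining the decay $\mathbb{E}|\Phi_t|^2\le\beta e^{-\lambda t}$ from \eqref{stability condition}, the cocycle/shift identity for $\Phi$, and a short-interval estimate on $\mathbb{E}\sup_{r\in[0,\tau)}|\Phi_r^{-1}|^{?}$ — or, more robustly, by applying the already-proven decay directly to the shifted homogeneous system started at time $r$, whose solution is precisely $\Phi_\cdot\Phi_r^{-1}$, using the $\tau$-random periodicity of $[A,C]$ to relate it back to $\Phi$ over one more period.

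For \eqref{square estimate}, I would use the variation-of-constants representation $X^{r,\xi}_t = \Phi_t\Phi_r^{-1}\xi + (\text{particular solution driven by }b,\sigma)$, so that $\mathbb{E}|X^{r,\xi}_{k\tau+r}|^2 \le 2\,\mathbb{E}|\Phi_{k\tau+r}\Phi_r^{-1}\xi|^2 + 2\,\mathbb{E}|X^{r,0}_{k\tau+r}|^2$. The first term is handled exactly as above and contributes $\le c\,e^{-\lambda k\tau}\mathbb{E}|\xi|^2 \le c\,\mathbb{E}|\xi|^2$. The second term, the response to the inhomogeneity starting from zero, must be bounded by a constant independent of $k$; this is where I expect the one of the two mild obstacles. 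The clean way is to invoke Proposition \ref{linear one order BSDE} (or Proposition \ref{linear BSDE}) to produce the relevant $\tau$-random periodic adjoint process and then apply an It\^o/duality computation: for instance, taking $\Lambda \equiv I_n$ in \eqref{linear BSDE equation} yields $K\in\mathscr{B}_\tau(\mathbb{S}_+^n)$ with $K_t = \mathbb{E}\big(\int_t^\infty (\Phi_s\Phi_t^{-1})^\top\Phi_s\Phi_t^{-1}\,\d s\,\big|\,\mathscr{F}_t\big)$ uniformly bounded; applying the It\^o formula to $t\mapsto \langle K_t, X^{r,0}_t (X^{r,0}_t)^\top\rangle$ plus possibly a linear term from Proposition \ref{linear one order BSDE} produces a Lyapunov-type inequality $\frac{\d}{\d t}\mathbb{E}\langle K_t, X_tX_t^\top\rangle \le -\mathbb{E}|X_t|^2 + c\,\mathbb{E}(|b_t|^2+|\sigma_t|^2) + c\sqrt{\mathbb{E}|X_t|^2}$, and then a Gronwall-type argument using the boundedness of $b,\sigma$ gives $\sup_{t\ge r}\mathbb{E}|X^{r,0}_t|^2 \le c$ uniformly in $r\in[0,\tau)$. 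Combining the two terms gives \eqref{square estimate}.

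The main obstacle, as I see it, is bookkeeping with the time shifts: making precise that $\Phi_{k\tau+r}\Phi_r^{-1}$ has the same law as a solution of the homogeneous system over a length-$k\tau$ interval (so that the exponential decay applies with constants uniform in $r$), and simultaneously that it is measurable with respect to $\mathscr{F}_{r,\infty}^W$ so that the independence of $\xi$ can be used to split the expectations. Lemma \ref{time shift property of homogeneous system} gives exactly the needed shift and independence identities at integer multiples of $\tau$, and the $\tau$-random periodicity of $A,C$ together with the measure-preserving property of $\theta$ lets me absorb the extra offset $r\in[0,\tau)$ into one additional period at the cost of enlarging the constant $\beta$; once this is set up carefully, both \eqref{delta square estimate} and \eqref{square estimate} follow by the routine estimates sketched above.
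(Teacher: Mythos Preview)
Your proposal is broadly correct and, for the uniform bound \eqref{square estimate}, essentially coincides with the paper: both invoke the $\tau$-random periodic solution $K$ of \eqref{linear BSDE equation} with $\Lambda=I_n$ together with the linear adjoint $(\eta,\zeta)$ from Proposition \ref{linear one order BSDE}, apply the It\^o formula to $\langle K_tX_t,X_t\rangle+2\langle\eta_t,X_t\rangle$, and close with a Gronwall argument. The only cosmetic difference is that you first split off the homogeneous part $\Phi_t\Phi_r^{-1}\xi$ and bound $X^{r,0}$ separately, whereas the paper runs the Lyapunov computation on $X^{r,\xi}$ directly.

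For the contraction estimate \eqref{delta square estimate} your route diverges from the paper. You represent $\widehat X_t=\Phi_t\Phi_r^{-1}(\xi_1-\xi_2)$, use independence to split the expectation, and then have to bound $\mathbb{E}|\Phi_{k\tau+r}\Phi_r^{-1}|^2$ uniformly in $r\in[0,\tau)$. This last step is the real work: the assumed decay in \eqref{stability condition} is stated only for $\Phi_t$, not for $\Phi_t\Phi_r^{-1}$, so one must factor $\Phi_{k\tau+r}\Phi_r^{-1}=(\Phi_{k\tau+r}\Phi_\tau^{-1})(\Phi_\tau\Phi_r^{-1})$, invoke Lemma \ref{time shift property of homogeneous system} to identify the first factor in law with $\Phi_{(k-1)\tau+r}$, bound the second factor uniformly over the single period $[r,\tau]$, and check the requisite independences. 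This can be carried out, so your sketch is not wrong, but it is more delicate than you indicate. The paper avoids all of this by reusing the same Lyapunov function $K$: applying the It\^o formula to $\widehat X_t^\top K_t\widehat X_t$ yields $\frac{\d}{\d t}\mathbb{E}\langle K_t\widehat X_t,\widehat X_t\rangle\le-\tfrac{1}{\alpha}\mathbb{E}\langle K_t\widehat X_t,\widehat X_t\rangle$ directly, for any starting time $r$, and Gronwall gives the exponential rate with constants automatically uniform in $r$. The payoff of the paper's approach is uniformity and economy---one tool (the periodic Lyapunov pair $(K,\eta)$) handles both estimates without any shift bookkeeping; the payoff of your approach is that it makes the role of the fundamental solution and of the independence hypothesis on $\xi$ completely explicit.
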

\begin{proof} The solvability of SDE \eqref{semi linear SDE} is obvious, and we only need to prove \eqref{delta square estimate} and \eqref{square estimate}. For any $r\in[0,\tau)$, let $X^{r,\xi_1},X^{r,\xi_2}$ be the solutions to \eqref{semi linear SDE} with the initial states $\xi_1,\xi_2$, respectively. Set
	\begin{equation*}
		\begin{aligned}
			\widehat{\xi}\triangleq\xi_1-\xi_2\ \ {\text and}\ \ \widehat{X}_t\triangleq X_t^{r,\xi_1}-X_t^{r,\xi_2},\ \ \ t\geq r.
		\end{aligned}
	\end{equation*}
	Then $\widehat{X}$ satisfies SDE
	\begin{equation*}
		\left\{\begin{aligned}
			\d \widehat{X}_t&=A_t\widehat{X}_t\d t+C_t\widehat{X}_t\d W_t,\ \ \ t\ge r,\\
			\widehat{X}_r&=\widehat{\xi}.
		\end{aligned}\right.
	\end{equation*}
	By Proposition \ref{linear BSDE}, BSDE
	\begin{equation}\label{linear BSDE Lambda=I}
		\begin{aligned}
			\d K_t=&-\big(K_tA_t+A_t^\top K_t+C_t^\top K_tC_t+L_tC_t+C_t^\top L_t+I_n\big)\d t+L_t\d W_t
		\end{aligned}
	\end{equation}
admits a unique $\tau$-random periodic solution $(K,L)\in L_{\mathbb{F}}^\infty(0,\infty;\mathbb{S}^n)\times L_{\mathbb{F}}^{2,loc}(0,\infty;\mathbb{S}^n)$, and $K\in\mathscr{B}_\tau(\mathbb{S}_+^n)$ is uniformly positive definite.	Then there exist $\delta,\alpha>0$, such that for any $t\ge 0$,
	\begin{equation*}
		\begin{aligned}
			\delta I_n\le K_t\le\alpha I_n\ \ {\text a.s.}
		\end{aligned}
	\end{equation*}
Applying It\^{o} formula to $\widehat{X}_t^\top K_t\widehat{X}_t$, we have
	\begin{equation*}
		\begin{aligned}
			\frac{\d}{\d t}\mathbb{E}\Big(\widehat{X}_t^\top K_t\widehat{X}_t\Big)=-\mathbb{E}\Big(\widehat{X}_t^\top\widehat{X}_t\Big)
			\le-\frac{1}{\alpha}\mathbb{E}\Big(\widehat{X}_t^\top K_t\widehat{X}_t\Big).
		\end{aligned}
	\end{equation*}
	By Gronwall inequality, for any $r\in[0,\tau)$, $k\in\mathbb{N}$, 
	\begin{equation}\label{contract property}
		\begin{aligned}
			\mathbb{E}\Big(\widehat{X}_{k\tau+r}^\top K_{k\tau+r}\widehat{X}_{k\tau+r}\Big)\le\mathbb{E}\Big(\widehat{\xi}^\top K_r\widehat{\xi}\Big)e^{-\frac{k\tau}{\alpha}}.
		\end{aligned}
	\end{equation}
	Thus
	\begin{equation*}
		\begin{aligned}			\mathbb{E}\Big(\widehat{X}_{k\tau+r}^\top\widehat{X}_{k\tau+r}\Big)\le\frac{1}{\delta}\mathbb{E}\Big(\widehat{X}_{k\tau+r}^\top K_{k\tau+r}\widehat{X}_{k\tau+r}\Big)\le\frac{\alpha }{\delta}e^{-\frac{k\tau}{\alpha}}\mathbb{E}\big|\widehat{\xi}\big|^2.
		\end{aligned}
	\end{equation*}

On the other hand, by Proposition \ref{linear one order BSDE}, BSDE
	\begin{equation*}
		\begin{aligned}
			\d \eta_t=&-\big(A_t^\top\eta_t+C_t^\top\zeta_t+K_tb_t+C_t^\top K_t\sigma_t+L_t\sigma_t\big)\d t+\zeta_t\d W_t
		\end{aligned}
	\end{equation*}
admits a unique $\tau$-random periodic solution $(\eta,\zeta)\in S_{\mathbb{F}}^2(0,\infty;\mathbb{S}^n)\times L_{\mathbb{F}}^{2,loc}(0,\infty;\mathbb{S}^n)$, where $(K,L)$ is the $\tau$-random periodic solution to \eqref{linear BSDE Lambda=I}.	Applying It\^{o} formula to $\big\langle K_tX^{r,\xi}_t,X^{r,\xi}_t\big\rangle+2\big\langle\eta_t,X^{r,\xi}_t\big\rangle$, we have
\begin{equation*}
		\begin{aligned}
			&\d\mathbb{E}\Big(\big\langle K_tX^{r,\xi}_t,X^{r,\xi}_t\big\rangle+2\big\langle\eta_t,X^{r,\xi}_t\big\rangle\Big)\\
			=&\mathbb{E}\Big(-\frac{\alpha}{1+\alpha}\big\langle X^{r,\xi}_t,X^{r,\xi}_t\big\rangle-\frac{1}{1+\alpha}\big\langle X^{r,\xi}_t,X^{r,\xi}_t\big\rangle+\big\langle K_t\sigma_t,\sigma_t\big\rangle+2\big\langle b_t,\eta_t\big\rangle+2\big\langle \sigma_t,\zeta_t\big\rangle\Big)&\nonumber\\
			\le&\mathbb{E}\Big(-\frac{1}{1+\alpha}\big\langle K_tX^{r,\xi}_t,X^{r,\xi}_t\big\rangle-\frac{1}{1+\alpha}\Big(-\big\langle \eta_t,\eta_t\big\rangle+2\big\langle \eta_t,X^{r,\xi}_t\big\rangle\Big)+\big\langle K_t\sigma_t,\sigma_t\big\rangle\\
			&\ \ \ \ +2\big\langle b_t,\eta_t\big\rangle+2\big\langle \sigma_t,\zeta_t\big\rangle\Big)\\
			=&\mathbb{E}\Big(-\frac{1}{1+\alpha}\Big(\big\langle K_tX^{r,\xi}_t,X^{r,\xi}_t\big\rangle+2\big\langle \eta_t,X^{r,\xi}_t\big\rangle\Big)+\frac{1}{1+\alpha}\big\langle\eta_t,\eta_t\big\rangle+\big\langle K_t\sigma_t,\sigma_t\big\rangle\\
			&\ \ \ \ +2\big\langle b_t,\eta_t\big\rangle+2\big\langle\sigma_t,\zeta_t\big\rangle\Big).
		\end{aligned}
	\end{equation*}
	By Gronwall inequality, for any $r\in[0,\tau)$, $k\in\mathbb{N}$,
	\begin{equation*}
		\begin{aligned}
			&\mathbb{E}\Big(\frac{1}{2}\big\langle K_{k\tau+r}X^{r,\xi}_{k\tau+r},X^{r,\xi}_{k\tau+r}\big\rangle-2\big\langle K_{k\tau+r}^{-1}\eta_{k\tau+r},\eta_{k\tau+r}\big\rangle\Big)\\
			\le&\mathbb{E}\Big(\big\langle K_{k\tau+r}X^{r,\xi}_{k\tau+r},X^{r,\xi}_{k\tau+r}\big\rangle+2\big\langle \eta_{k\tau+r},X^{r,\xi}_{k\tau+r}\big\rangle\Big)\\
			\le&\mathbb{E}\Big(\big\langle K_r\xi,\xi\big\rangle+2\big\langle \eta_r,\xi\big\rangle\Big)e^{-\frac{k\tau}{1+\alpha}}\\
			&+\mathbb{E}\Big(\int_r^{k\tau+r} e^{-\frac{k\tau+r-s}{1+\alpha}}\Big(\frac{1}{1+\alpha}\big\langle \eta_s,\eta_s\big\rangle+\big\langle K_s\sigma_s,\sigma_s\big\rangle+2\big\langle b_s,\eta_s\big\rangle+2\big\langle \sigma_s,\zeta_s\big\rangle\Big)\d s\Big).
		\end{aligned}
	\end{equation*}
	Thus
\begin{equation*}
		\begin{aligned}
			&\mathbb{E}\big|X^{r,\xi}_{k\tau+r}\big|^2\\
			\le&\frac{2}{\delta}\Big(\frac{2}{\delta}\sup\limits_{s\in[k\tau,(k+1)\tau]}\mathbb{E}\big|\eta_s\big|^2+\Big((\alpha+1)\mathbb{E}|\xi|^2+\sup\limits_{s\in[0,\tau]}\mathbb{E}|\eta_s|^2\Big)e^{-\frac{k\tau}{1+\alpha}}\\
			&+\mathbb{E}e^{\frac{\tau}{1+\alpha}}\int_0^{(k+1)\tau}e^{-\frac{(k+1)\tau-s}{1+\alpha}}\Big(\frac{1}{1+\alpha}\big|\eta_s\big|^2+\alpha\big|\sigma_s\big|^2+\big|\eta_s\big|^2+\big|b_s\big|^2+\big|\zeta_s\big|^2+\big|\sigma_s\big|^2\Big)\d s\Big)\\
			\le&\Big(\frac{2}{\delta}\big(\frac{2}{\delta}+1+\alpha\big)\Big)\Big(\sup\limits_{s\in[0,\tau]}\mathbb{E}|\eta_s|^2+\mathbb{E}|\xi|^2\Big)\\
			&+\frac{2}{\delta}\big(\frac{1}{1+\alpha}+1+\alpha\big) e^{\frac{\tau}{1+\alpha}}\big(1-e^{-\frac{\tau}{1+\alpha}}\big)^{-1}\mathbb{E}\int_0^\tau\big(\big|\eta_s\big|^2+\big|\zeta_s\big|^2+\big|b_s\big|^2+\big|\sigma_s\big|^2\big)\d s,
		\end{aligned}
	\end{equation*}
	which puts the end of the proof.
\end{proof}

In particular, if the initial time of state equation is $0$, we immediately have the following corollary.
\begin{cor}\label{priori estimate zero intial time} For a given $\tau>0$, assume that
	$[A,C]$ is $\tau$-random periodic mean-square exponentially stable and $b,\sigma \in\mathscr{B}_\tau(\mathbb{R}^n)$. Then there exists $\beta,\lambda>0$ such that SDE with the $\mathbb{F}$-independent initial state $\xi\in L_{\mathscr{F}}^2(\Omega;\mathbb{R}^n)$
	\begin{equation*}\label{semi linear SDE zero intial time}
		\left\{\begin{aligned}
			d X_t&=\big(A_tX_t+b_t\big)\d t+\big(C_tX_t+\sigma_t\big)\d W_t,\ \ \ t\ge 0,\\
			X_0&=\xi.
		\end{aligned}\right.
	\end{equation*}
admits a unique solution $X^{0,\xi}\in L_{\mathbb{F}}^{2,loc}(0,\infty;\mathbb{R}^{n})$,	and for any $\mathbb{F}$-independent $\xi, \xi_1, \xi_2\in L_{\mathscr{F}_r}^2(\Omega;\mathbb{R}^n)$, $t\ge0$,
	\begin{equation}\label{delta square estimate zero intial time}
		\begin{aligned}
			\mathbb{E}\big|X^{0,\xi_1}_t-X^{0,\xi_2}_t\big|^2\le \beta e^{-\lambda t}\mathbb{E}|\xi_1-\xi_2|^2
		\end{aligned}
	\end{equation}
and
	\begin{equation}\label{square estimate zero intial time}
		\begin{aligned}
			\mathbb{E}\big|X^{0,\xi}_t\big|^2\le \beta(1+\mathbb{E}|\xi|^2).
		\end{aligned}
	\end{equation}
\end{cor}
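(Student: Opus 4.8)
The plan is to bootstrap Corollary~\ref{priori estimate zero intial time} from Proposition~\ref{priori estimate}: the latter already supplies both estimates at the periodic instants $t=k\tau$, and it remains only to interpolate over the at-most-$\tau$-long fractional part of each period by an elementary Gronwall bound on a bounded interval.

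First I would record the bounded-horizon estimates. Since $A,C,b,\sigma\in\mathscr{B}_\tau\subset L_{\mathbb{F}}^\infty$, denote by $\|A\|_\infty,\dots,\|\sigma\|_\infty$ their essential suprema. Applying It\^o's formula to $|X^{0,\xi}_t|^2$, and to $|X^{0,\xi_1}_t-X^{0,\xi_2}_t|^2$ (whose integrand solves the homogeneous equation $\d\widehat X_t=A_t\widehat X_t\,\d t+C_t\widehat X_t\,\d W_t$), bounding the cross terms by Young's inequality and then invoking Gronwall's inequality, produces a constant $c_\tau>0$ depending only on $\tau$ and these norms such that
\begin{equation*}
\mathbb{E}\big|X^{0,\xi}_s\big|^2\le c_\tau\big(1+\mathbb{E}|\xi|^2\big),\qquad \mathbb{E}\big|X^{0,\xi_1}_s-X^{0,\xi_2}_s\big|^2\le c_\tau\,\mathbb{E}|\xi_1-\xi_2|^2\qquad\text{for all }s\in[0,\tau].
\end{equation*}
Existence and uniqueness of $X^{0,\xi}\in L_{\mathbb{F}}^{2,loc}(0,\infty;\mathbb{R}^n)$ is standard, exactly as in Proposition~\ref{priori estimate}.

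Next, fix $t\ge0$ and write $t=k\tau+r$ with $k\in\mathbb{N}$, $r\in[0,\tau)$; note that $r$ is an admissible initial time in Proposition~\ref{priori estimate}. By uniqueness of solutions to \eqref{semi linear SDE} (the flow property), on $[r,\infty)$ the trajectory $X^{0,\xi}$ coincides with $X^{r,\zeta}$ where $\zeta:=X^{0,\xi}_r$. Since $\xi$ is independent of $\mathscr{F}_{0,\infty}^W\supset\mathscr{F}_{r,\infty}^W$ whereas $\mathscr{F}_{r}^W$ and $\mathscr{F}_{r,\infty}^W$ are independent by the independent-increments property of $W$, the restarted datum $\zeta$ is again $\mathscr{F}_{r,\infty}^W$-independent, and $\zeta\in L^2$ by the first step. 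Hence, applying \eqref{square estimate} and \eqref{delta square estimate} with initial time $r$ and initial data $\zeta_i:=X^{0,\xi_i}_r$, and then the first-step bounds,
\begin{equation*}
\mathbb{E}\big|X^{0,\xi}_t\big|^2=\mathbb{E}\big|X^{r,\zeta}_{k\tau+r}\big|^2\le\beta\big(1+\mathbb{E}|\zeta|^2\big)\le\beta\big(1+c_\tau(1+\mathbb{E}|\xi|^2)\big),
\end{equation*}
\begin{equation*}
\mathbb{E}\big|X^{0,\xi_1}_t-X^{0,\xi_2}_t\big|^2=\mathbb{E}\big|X^{r,\zeta_1}_{k\tau+r}-X^{r,\zeta_2}_{k\tau+r}\big|^2\le\beta e^{-\lambda k\tau}\,\mathbb{E}|\zeta_1-\zeta_2|^2\le\beta c_\tau e^{-\lambda k\tau}\,\mathbb{E}|\xi_1-\xi_2|^2 .
\end{equation*}
Finally, since $k\tau=t-r>t-\tau$ we have $e^{-\lambda k\tau}<e^{\lambda\tau}e^{-\lambda t}$, so after relabelling the constant $\beta$ (keeping the same $\lambda$) we obtain \eqref{square estimate zero intial time} and \eqref{delta square estimate zero intial time}.

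There is no genuine obstacle here; the only point needing a little care is the flow/uniqueness identification $X^{0,\xi}|_{[r,\infty)}=X^{r,X^{0,\xi}_r}$ together with the verification that $X^{0,\xi}_r$ remains independent of $\mathscr{F}_{r,\infty}^W$ --- which is precisely where the independent-increments property of the Brownian motion enters --- while everything else is the routine interpolation between the periodic time points already controlled by Proposition~\ref{priori estimate}.
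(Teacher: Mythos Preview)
Your argument is correct and follows essentially the same route as the paper: decompose $t=k\tau+r$, use the flow identity $X^{0,\xi}_t=X^{r,X^{0,\xi}_r}_{k\tau+r}$, apply Proposition~\ref{priori estimate} at the periodic time, and control the restarted datum by a standard Gronwall bound on $[0,\tau]$ before converting $e^{-\lambda k\tau}$ to $e^{-\lambda t}$ via the factor $e^{\lambda\tau}$. The paper's proof is terser---it leaves the bounded-horizon Gronwall step and the independence check for $X^{0,\xi}_r$ implicit---but the structure is identical.
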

\begin{proof} We only need to prove \eqref{delta square estimate zero intial time} and \eqref{square estimate zero intial time}. For any $t=k\tau+r$, $r\in[0,\tau)$, $k\in\mathbb{N}$, by \eqref{delta square estimate} and \eqref{square estimate},
	\begin{equation*}
		\begin{aligned}			\mathbb{E}\big|X^{0,\xi_1}_t-X^{0,\xi_2}_t\big|^2=&\mathbb{E}\big|X^{r,X^{0,\xi_1}_r}_{k\tau+r}-X^{r,X^{0,\xi_2}_r}_{k\tau+r}\big|^2\le c e^{-\lambda k\tau}\mathbb{E}|X^{0,\xi_1}_r-X^{0,\xi_2}_r|^2\\
			\le& ce^{\lambda \tau}e^{-\lambda (k\tau+r)}\sup\limits_{s\in[0,\tau]}\mathbb{E}|X^{0,\xi_1}_s-X^{0,\xi_2}_s|^2\le c e^{-\lambda t}\mathbb{E}|\xi_1-\xi_2|^2
	\end{aligned}
	\end{equation*}
and
	\begin{equation*}
		\begin{aligned}			
\mathbb{E}\big|X^{0,\xi}_t\big|^2=\mathbb{E}\big|X^{r,X^{0,\xi}_r}_{k\tau+r}\big|^2
			\le c(1+\sup\limits_{s\in[0,\tau]}\mathbb{E}|X^{0,\xi}_s|^2)\le c(1+\mathbb{E}|\xi|^2).
		\end{aligned}
	\end{equation*}
The desired estimates follows.
\end{proof}

Next we show how the shift operator of Brownian path $\theta$ shifts the solution to SDE \eqref{semi linear SDE}.
\begin{prop}\label{periodic time shift property} For a given $\tau>0$, assume
	$[A,C]$ is $\tau$-random periodic mean-square exponentially stable and $b,\sigma\in\mathscr{B}_\tau(\mathbb{R}^n)$. Then the solution $X^{0,\xi}$ to SDE \eqref{semi linear SDE} with the initial time $0$ and $\mathbb{F}$-independent initial state $\xi\in L_{\mathscr{F}}^2(\Omega;\mathbb{R}^n)$ satisfies for any $t\ge 0$, $k\in\mathbb{N}$,
	\begin{equation*}
		\begin{aligned}
			X^{k\tau,\theta_{k\tau}\circ\xi}_{t+k\tau}=\theta_{k\tau}\circ X^{0,\xi}_{t}\ \ {\text a.s.}
		\end{aligned}
	\end{equation*}
\end{prop}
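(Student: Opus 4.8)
The plan is to identify two SDEs on $[0,\infty)$ that both have $\theta_{k\tau}\circ X^{0,\xi}_{\cdot}$ and $X^{k\tau,\theta_{k\tau}\circ\xi}_{\cdot+k\tau}$ as solutions, and then invoke pathwise uniqueness, exactly in the spirit of the proof of Lemma \ref{time shift property of homogeneous system}. First I would write down the equation satisfied by $\phi_t\triangleq X^{k\tau,\theta_{k\tau}\circ\xi}_{t+k\tau}$: by the time-shift $t\mapsto t+k\tau$ in \eqref{semi linear SDE} (with initial time $r=k\tau$) it solves
\begin{equation*}
	\left\{\begin{aligned}
		\d\phi_t&=\big(A_{t+k\tau}\phi_t+b_{t+k\tau}\big)\d t+\big(C_{t+k\tau}\phi_t+\sigma_{t+k\tau}\big)\d W_{t+k\tau},\ \ \ t\ge 0,\\
		\phi_0&=\theta_{k\tau}\circ\xi.
	\end{aligned}\right.
\end{equation*}
On the other hand, $\psi_t\triangleq\theta_{k\tau}\circ X^{0,\xi}_t$ solves, by applying $\theta_{k\tau}$ to \eqref{semi linear SDE} with $r=0$,
\begin{equation*}
	\left\{\begin{aligned}
		\d\psi_t&=\big((\theta_{k\tau}\circ A_t)\psi_t+\theta_{k\tau}\circ b_t\big)\d t+\big((\theta_{k\tau}\circ C_t)\psi_t+\theta_{k\tau}\circ\sigma_t\big)\d(\theta_{k\tau}\circ W_t),\ \ \ t\ge 0,\\
		\psi_0&=\theta_{k\tau}\circ\xi.
	\end{aligned}\right.
\end{equation*}
Since $A,C,b,\sigma$ are $\tau$-random periodic, their $k$-fold iterates give $\theta_{k\tau}\circ A_t=A_{t+k\tau}$ a.s., and likewise for $C,b,\sigma$; and by \eqref{shift property of Brownian motion}, $\theta_{k\tau}\circ W_t=W_{t+k\tau}-W_{k\tau}$, so $\d(\theta_{k\tau}\circ W_t)=\d W_{t+k\tau}$. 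Hence $\phi$ and $\psi$ satisfy the same linear SDE with the same initial condition.

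The second step is to pin down in which filtration uniqueness is being applied, since the driving process is $\{W_{t+k\tau}-W_{k\tau}\}_{t\ge 0}$, a Brownian motion with respect to $\{\mathscr{F}^W_{k\tau,t+k\tau}\}_{t\ge 0}$, and the coefficients $A_{t+k\tau},C_{t+k\tau},b_{t+k\tau},\sigma_{t+k\tau}$ are progressively measurable with respect to this filtration by the remark following Definition \ref{wz28} on the compatibility of $\tau$-random periodicity with $\mathbb{F}$-progressive measurability. The initial datum $\theta_{k\tau}\circ\xi$ is $\theta_{k\tau}\circ\mathscr{F}$-measurable, hence independent of $\{W_{t+k\tau}-W_{k\tau}\}_{t\ge 0}$ — this is the role of the $\mathbb{F}$-independence hypothesis on $\xi$, and it is precisely the setting of Proposition \ref{priori estimate} (with $r$ replaced by $k\tau$, using $\tau$-periodicity to extend that proposition's conclusions from $[0,\tau)$ to any $k\tau$). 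Both $\phi$ and $\psi$ are progressively measurable in this filtration and lie in $L^{2,loc}_{\mathbb{F}}$ (for $\phi$ by Proposition \ref{priori estimate}; for $\psi$ because $\theta_{k\tau}$ is measure-preserving so it preserves $L^{2,loc}$ norms). Standard pathwise uniqueness for linear SDEs with bounded coefficients then forces $\phi_t=\psi_t$ a.s. for every $t\ge 0$, which is the claim.

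The only genuinely delicate point — and the one I would spend the most care on — is the bookkeeping of measurability and the justification that $\theta_{k\tau}$ may be pushed through the stochastic integral to produce $\d(\theta_{k\tau}\circ W_t)=\d W_{t+k\tau}$; this is the same subtlety already handled in Lemma \ref{time shift property of homogeneous system}, where one checks that both candidate processes are $\mathscr{F}^W_{k\tau,t+k\tau}$-measurable and then identifies them. Everything else is routine: no new estimates are needed, and the $\mathbb{F}$-independence of $\xi$ is exactly what guarantees the shifted initial condition is independent of the shifted Brownian motion, so that the uniqueness statement one wants is available. One could also prove the identity by induction on $k$ using $\theta_{\tau}\circ\theta_{(k-1)\tau}=\theta_{k\tau}$ and the $k=1$ case, but the direct argument above is cleaner.
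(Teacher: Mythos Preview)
Your proposal is correct and follows essentially the same approach as the paper: write down the SDE satisfied by $X^{k\tau,\theta_{k\tau}\circ\xi}_{t+k\tau}$ and by $\theta_{k\tau}\circ X^{0,\xi}_t$, use the $\tau$-random periodicity of $A,C,b,\sigma$ together with $\theta_{k\tau}\circ W_t=W_{t+k\tau}-W_{k\tau}$ to see that the two equations coincide, and conclude by uniqueness of the strong solution after noting both processes are measurable with respect to $\mathscr{F}^W_{k\tau,t+k\tau}\vee\sigma(\theta_{k\tau}\circ\xi)$. Your discussion of the filtration and of the role of the $\mathbb{F}$-independence of $\xi$ is slightly more explicit than the paper's, but the argument is the same.
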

\begin{proof} Notice that for any $t\ge 0$, $k\in\mathbb{N}$, $X^{k\tau,\theta_{k\tau}\circ\xi}_{t+k\tau}$ satisfies SDE
	\begin{equation*}
		\left\{\begin{aligned}
			\d X_t=&\big(A_{t+k\tau}X_t+b_{t+k\tau}\big)\d t+\big(C_{t+k\tau}X_t+\sigma_{t+k\tau}\big)\d W_{t+k\tau},\\
			X_0=&\theta_{k\tau}\circ\xi.\\	
		\end{aligned}\right.
	\end{equation*}
	On the other hand, for any $t\ge 0$, $k\in\mathbb{N}$, $\theta_{k\tau}\circ X^{0,\xi}_{t}$ satisfies SDE
	\begin{equation*}
		\left\{\begin{aligned}
			\d X_t=&\big((\theta_{k\tau}\circ A_{t})X_t+\theta_{k\tau}\circ b_{t}\big)\d t+\big((\theta_{k\tau}\circ C_{t})X_t+\theta_{k\tau}\circ\sigma_{t}\big)\d (\theta_{k\tau}\circ W_{t}),\\
			X_0=&\theta_{k\tau}\circ\xi.\\	
		\end{aligned}\right.
	\end{equation*}
	Bearing in mind that $\theta_{k\tau}\circ W_{t}=W_{t+k\tau}-W_{k\tau}$, $A,C,b,\sigma$ are $\tau$-random periodic 
and the measurability of $X^{k\tau,\theta_{k\tau}\circ\xi}_{t+k\tau}$ and $\theta_{k\tau}\circ X^{0,\xi}_{t}$ with respect to $\mathscr{F}_{k\tau,t+k\tau}^W\vee\sigma(\theta_{k\tau}\circ\xi)$, we know that the above two equations coincides. So Proposition \ref{periodic time shift property} follows from the uniqueness of strong solution to SDE \eqref{semi linear SDE} follows.
\end{proof}

Then we show that the solution to state equation could be $\tau$-random periodic if its initial state is properly chosen.
\begin{prop}\label{random periodic SDE} For a given $\tau>0$, assume that
	$[A,C]$ is $\tau$-random periodic mean-square exponentially stable and $b,\sigma\in\mathscr{B}_\tau(\mathbb{R}^n)$. Then there exists a unique $\mathbb{F}$-independent $\xi^*\in L_{\mathscr{F}}^2(\Omega;\mathbb{R}^n)$ such that $X^{0,\xi^*}$ is the solution to SDE \eqref{semi linear SDE} with the initial time $0$ and initial state $\xi^*$ satisfying for any $t\ge 0$,
	\begin{equation}\label{random periodic property}
		\begin{aligned}
			X^{0,\xi^*}_{t+\tau}=\theta_{\tau}\circ X^{0,\xi^*}_t\ \ {\text a.s.}
		\end{aligned}
	\end{equation}
\end{prop}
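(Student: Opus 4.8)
The plan is to reduce the random periodicity relation \eqref{random periodic property} to a single identity at $t=0$, and then to produce the required initial state by a contraction argument in $L^2$. I would first show that it suffices to find an $\mathbb{F}$-independent $\xi^*\in L^2_{\mathscr{F}}(\Omega;\mathbb{R}^n)$ satisfying $X^{0,\xi^*}_{\tau}=\theta_{\tau}\circ\xi^*$ a.s. Indeed, once this holds, $X^{0,\xi^*}_{\tau}$ is square-integrable and independent of $\mathscr{F}_{\tau,\infty}^W$ (being built only from $W$ on $[0,\tau]$ and from $\xi^*$), so on $[\tau,\infty)$ the process $X^{0,\xi^*}$ coincides with $X^{\tau,X^{0,\xi^*}_{\tau}}$, the solution of \eqref{semi linear SDE} with initial time $\tau$ and initial state $X^{0,\xi^*}_{\tau}$; applying Proposition \ref{periodic time shift property} with $k=1$ to $\xi^*$ gives $X^{\tau,\theta_{\tau}\circ\xi^*}_{t+\tau}=\theta_{\tau}\circ X^{0,\xi^*}_t$, and since $\theta_{\tau}\circ\xi^*=X^{0,\xi^*}_{\tau}$, uniqueness of the strong solution yields $X^{0,\xi^*}_{t+\tau}=\theta_{\tau}\circ X^{0,\xi^*}_t$ for every $t\ge 0$.

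To produce $\xi^*$, I would work on the complete space $\mathcal{H}$ of square-integrable random variables measurable with respect to the $\sigma$-algebra generated by the increments of $W$ occurring before time $0$; this $\sigma$-algebra is independent of $\mathscr{F}_{\infty}^W$, so every element of $\mathcal{H}$ is $\mathbb{F}$-independent, and the inverse shift $\theta_{-\tau}$ restricts to a measure-preserving map of $\mathcal{H}$ into itself, acting by shifting further into the past. Define $F:\mathcal{H}\to\mathcal{H}$ by $F(\xi):=\theta_{-\tau}\circ X^{0,\xi}_{\tau}$, which is well defined by the growth bound \eqref{square estimate zero intial time}. Using Proposition \ref{periodic time shift property} together with the flow property of \eqref{semi linear SDE} one checks that $F^{k}(\xi)=\theta_{-k\tau}\circ X^{0,\xi}_{k\tau}$, so the measure-preserving property of $\theta$ and the contraction estimate \eqref{delta square estimate zero intial time} of Corollary \ref{priori estimate zero intial time} give
\begin{equation*}
\mathbb{E}\big|F^{k}(\xi_1)-F^{k}(\xi_2)\big|^2=\mathbb{E}\big|X^{0,\xi_1}_{k\tau}-X^{0,\xi_2}_{k\tau}\big|^2\le\beta e^{-\lambda k\tau}\,\mathbb{E}|\xi_1-\xi_2|^2 .
\end{equation*}
For $k$ large $F^{k}$ is then a strict contraction, so by the Banach fixed-point theorem $F$ has a unique fixed point $\xi^*\in\mathcal{H}$ (equivalently, $\xi^*$ is the $L^2$-limit of the time-$0$ states of the solutions started, through the shift, at times $-k\tau$). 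Unwinding $F(\xi^*)=\xi^*$ gives precisely $X^{0,\xi^*}_{\tau}=\theta_{\tau}\circ\xi^*$, and $\xi^*$ is $\mathbb{F}$-independent by construction.

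For uniqueness among all $\mathbb{F}$-independent initial states, I would note that any $\xi^*_1,\xi^*_2$ satisfying \eqref{random periodic property} must obey $X^{0,\xi^*_i}_{k\tau}=\theta_{k\tau}\circ\xi^*_i$ for every $k$ (iterate the relation and use $\theta_s\circ\theta_t=\theta_{s+t}$), so the measure-preserving property of $\theta_{k\tau}$ and \eqref{delta square estimate zero intial time} give
\begin{equation*}
\mathbb{E}|\xi^*_1-\xi^*_2|^2=\mathbb{E}\big|X^{0,\xi^*_1}_{k\tau}-X^{0,\xi^*_2}_{k\tau}\big|^2\le\beta e^{-\lambda k\tau}\,\mathbb{E}|\xi^*_1-\xi^*_2|^2 ,
\end{equation*}
and letting $k\to\infty$ forces $\xi^*_1=\xi^*_2$ a.s. The step I expect to be the main obstacle is the measurability and independence bookkeeping behind the construction: one has to isolate the correct subspace $\mathcal{H}$ and verify carefully that $F$ maps it into itself, i.e. that rewinding $X^{0,\xi}_{\tau}$ (which is measurable with respect to the increments of $W$ up to time $\tau$) by $\theta_{-\tau}$ produces a random variable still independent of $\mathscr{F}_{\infty}^W$, and that this independence is inherited by the fixed point; the remaining steps are routine applications of Corollary \ref{priori estimate zero intial time} and Proposition \ref{periodic time shift property}.
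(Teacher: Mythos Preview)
Your overall strategy --- rewinding $X^{0,\xi}_\tau$ via the inverse shift and finding a fixed point of the resulting map --- is exactly the one the paper uses.  Two points of comparison are worth noting.

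First, the space $\mathcal{H}$ you propose is not available in the paper's framework: here $W$ is only defined for $t\ge 0$, so there are no ``increments of $W$ occurring before time $0$'' to generate a $\sigma$-algebra, and your $\mathcal{H}$ collapses.  The paper sidesteps this by working directly in the larger closed subspace
\[
\mathscr{G}=\big\{\xi\in L^2_{\mathscr{F}}(\Omega;\mathbb{R}^n)\ :\ \xi\text{ is independent of }\mathbb{F}\big\},
\]
and the measurability check you flagged as the main obstacle is precisely the verification that $\psi(\xi):=\theta_\tau^{-1}\circ X^{0,\xi}_\tau$ maps $\mathscr{G}$ to itself: since $X^{0,\xi}_\tau$ is independent of $\mathscr{F}_{\tau,\infty}^W$ and $\theta_\tau$ is measure preserving with $\theta_\tau^{-1}(\mathscr{F}_{\tau,\infty}^W)=\mathscr{F}_\infty^W$, the rewound variable is again $\mathbb{F}$-independent.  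Your iteration formula $F^k(\xi)=\theta_{-k\tau}\circ X^{0,\xi}_{k\tau}$ and the passage from the fixed-point identity to \eqref{random periodic property} via Proposition~\ref{periodic time shift property} are correct and match the paper.

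Second, rather than iterating and invoking \eqref{delta square estimate zero intial time} to make some $F^k$ a strict contraction in the standard $L^2$ norm, the paper equips $\mathscr{G}$ with the equivalent norm
\[
d_0(\xi_1,\xi_2)=\mathbb{E}\big\langle K_0(\xi_1-\xi_2),\xi_1-\xi_2\big\rangle,
\]
where $K$ is the uniformly positive definite $\tau$-random periodic solution of \eqref{linear BSDE Lambda=I}, and uses the estimate \eqref{contract property} to make $\psi$ a one-step contraction.  This is a cosmetic difference; either route yields the same unique fixed point.  Your separate uniqueness argument is fine, though in the paper uniqueness already falls out of the contraction on all of $\mathscr{G}$.
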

\begin{proof}Set
	\begin{equation*}
		\begin{aligned}
			\mathscr{G}=\Big\{\xi\in L_{\mathscr{F}}^2(\Omega;\mathbb{R}^n)\Big|\xi\text{ is independent of }\mathbb{F}\Big\}.
		\end{aligned}
	\end{equation*}
	For any $\xi\in\mathscr{G}$ and at time $\tau$, the solution $X^{0,\xi}_\tau$ to SDE \eqref{semi linear SDE} with the initial time $0$ and initial state $\xi$ is independent of $\mathscr{F}_{\tau,\infty}^W$, and thus $\theta_\tau^{-1}\circ X^{0,\xi}_\tau$ is independent of $\mathbb{F}$.\\
	By Proposition \ref{priori estimate} and measure preserving property of $\theta_\tau$
	\begin{equation*}
		\begin{aligned}
			\mathbb{E}\big|\theta_\tau^{-1}\circ X^{0,\xi}_\tau\big|^2=\mathbb{E}\big|X^{0,\xi}_\tau\big|^2\le c(1+\mathbb{E}|\xi|^2),
		\end{aligned}
	\end{equation*}
	which leads to
$\theta_\tau^{-1}\circ X^{0,\xi}_{\tau}\in L_{\mathscr{F}}^2(\Omega;\mathbb{R}^n)$.
	Then define a measurable mapping $\psi:\mathscr{G}\rightarrow \mathscr{G}$ as below
	\begin{equation*}
		\begin{aligned}
			\psi(\xi)=\theta_\tau^{-1}\circ X^{0,\xi}_\tau.
		\end{aligned}
	\end{equation*}
	Notice that $L_{\mathscr{F}}^2(\Omega;\mathbb{R}^n)$ is equipped with the common distance
	\begin{equation*}
		\begin{aligned}
			d(\xi_1,\xi_2)=\mathbb{E}\Big((\xi_1-\xi_2)^\top(\xi_1-\xi_2)\Big).
		\end{aligned}
	\end{equation*}
	Let $K$ be the solution to \eqref{linear BSDE Lambda=I}, and then
	\begin{equation*}
		\begin{aligned}
			d_0(\xi_1,\xi_2)=\mathbb{E}\Big((\xi_1-\xi_2)^\top K_0(\xi_1-\xi_2)\Big).
		\end{aligned}
	\end{equation*}
introduce a new distance in $L_{\mathscr{F}}^2(\Omega;\mathbb{R}^n)$ since $K_0\in\mathbb{S}_+^n$. Consequently, $d$ is equivalent to $d_0$, and $L_{\mathscr{F}}^2(\Omega;\mathbb{R}^n)$ equipped with $d_0$ is a Banach space. Noticing $\mathscr{G}\subset L_{\mathscr{F}}^2(\Omega;\mathbb{R}^n)$ is a closed linear space, we know that $\mathscr{G}$ equipped with $d_0$ is a Banach space.
Moreover, by \eqref{contract property}
	\begin{equation*}\label{contraction mapping}
		\begin{aligned}
			\mathbb{E}\Big(\widehat{X}_\tau^\top K_0\widehat{X}_\tau\Big)\le\mathbb{E}\Big(\widehat{\xi}^\top K_0\widehat{\xi}\Big)e^{-\lambda\tau}.
		\end{aligned}
	\end{equation*}
Hence $\psi$ is a contraction mapping under $d_0$. By the fixed-point theorem, there exists a unique $\xi^*\in\mathscr{G}$ such that
	\begin{equation*}
		\begin{aligned}
			\theta_\tau^{-1}\circ X^{0,\xi^*}_\tau=\xi^*\ \ {\text a.s.}
		\end{aligned}
	\end{equation*}
Therefore, by Proposition \ref{periodic time shift property}, we have
	\begin{equation*}
		\begin{aligned}
			\theta_\tau\circ X^{0,\xi^*}_t=X^{\tau,\theta_\tau\circ\xi^*}_{t+\tau}=X^{\tau,X^{0,\xi^*}_\tau}_{t+\tau}=X^{0,\xi^*}_{t+\tau}\ \ {\text a.s.}
		\end{aligned}
	\end{equation*}
\end{proof}

\section{Random periodic solutions to stochastic Riccati equations}
To begin with, we formulate our concerned control problem. For a $\mathbb{R}^m$-valued control process $u$ and a given $x\in\mathbb{R}^n$, the linear state equation has a form
\begin{equation}\label{wz3}
	\left\{\begin{aligned}
		\d X_t=&(A_tX_t+B_tu_t+b_t)\d t+(C_tX_t+\sigma_t)\d W_t,\ \ \ t\ge 0,\\
		X_0=&x,
	\end{aligned}\right.
\end{equation}
where $A,C:[0,\infty)\times\Omega\rightarrow\mathbb{R}^{n\times n}$, $B:[0,\infty)\times\Omega\rightarrow\mathbb{R}^{n\times m},b:[0,\infty)\times\Omega\rightarrow\mathbb{R}^n$ and $\sigma:[0,\infty)\times\Omega\rightarrow\mathbb{R}^n$ are $\mathbb{F}$-progressively measurable functions.

For $T>0$, define
\begin{equation}\label{wz4}
	\begin{aligned}
		J_T(x,u)=\mathbb{E}\int_0^TF(t,X_t,u_t)\d t,
	\end{aligned}
\end{equation}
where
\begin{equation*}
	\begin{aligned}
		F(t,X_t,u_t)
		=&\left\langle Q_tX_t,X_t\right\rangle+2\left\langle S_tX_t,u_t\right\rangle+\left\langle R_tu_t,u_t\right\rangle+2\left\langle q_t,X_t\right\rangle+2\left\langle \rho_t,u_t\right\rangle,
	\end{aligned}
\end{equation*}
and $Q:[0,\infty)\times\Omega\rightarrow\mathbb{S}^{n}$, $S:[0,\infty)\times\Omega\rightarrow\mathbb{R}^{m\times n}$, $R:[0,\infty)\times\Omega\rightarrow\mathbb{S}^{m}$, $q:[0,\infty)\times\Omega\rightarrow\mathbb{R}^n$, $\rho:[0,\infty)\times\Omega\rightarrow\mathbb{R}^m$ are $\mathbb{F}$-progressively measurable functions.

The cost functional is defined as
\begin{equation*}
	\begin{aligned}
		\mathcal{E}(x,u)=\varliminf\limits_{T \to \infty}\frac{1}{T}J_T(x,u).
	\end{aligned}
\end{equation*}

We study the closed-loop control for above state equation and cost functional. The control-state pair $(u^{x,\Theta,v}, X^{x,\Theta,v})$ satisfies
\begin{equation}\label{wz1}
	\begin{aligned}
		u^{x,\Theta,v}_t=\Theta_tX^{x,\Theta,v}_t+v_t,\ \ \ t\ge 0
	\end{aligned}
\end{equation}
and the closed-loop state equation becomes
\begin{equation}\label{wz2}
		\left\{\begin{aligned}
			\d X^{x,\Theta,v}_t=&\Big((A_t+B_t\Theta_t)X^{x,\Theta,v}_t+B_tv_t+b_t\Big)\d t+\Big(C_tX^{x,\Theta,v}_t+\sigma_t\Big)\d W_t,\ \ \ t\ge 0,\\
			X^{x,\Theta,v}_0=&x,
		\end{aligned}\right.
\end{equation}
where $(\Theta,v):[0,\infty)\times\Omega\rightarrow\mathbb{R}^{n\times n}\times\mathbb{R}^{n}$ are $\mathbb{F}$-progressively measurable functions.

Denote by $\mathbb{U}$ a set of measurable functions $(\Theta,v)$ which guarantees the control-state pair $(u^{x,\Theta,v}, X^{x,\Theta,v})\in L_{\mathbb{F}}^{2,loc}(0,\infty;\mathbb{R}^{m})\times L_{\mathbb{F}}^{2,loc}(0,\infty;\mathbb{R}^{n})$ and will be further specified. Then we define the admissible set
\begin{equation*}
	\begin{aligned}		
		\mathcal{U}=\Big\{u^{x,\Theta,v}\in L_{\mathbb{F}}^{2,loc}(0,\infty;\mathbb{R}^{m})\Big|u^{x,\Theta,v}\ {\text satisfies}\ (\ref{wz1})\ {\text with}\ (\Theta,v)\in\mathbb{U}\Big\}.
	\end{aligned}
\end{equation*}

Now the concerned ergodic LQ optimal control problem is given as below.

\textbf{Problem (CL-LQE).} Find a closed-loop control $u^{x,\bar{\Theta},\bar{v}}\in\mathcal{U}$ such that
\begin{equation*}
	\begin{aligned}
		\mathcal{E}(x,u^{x,\bar{\Theta},\bar{v}})=
		\inf\limits_{u^{x,\Theta,v}\in\mathcal{U}}\mathcal{E}(x,u^{x,\Theta,v}).
	\end{aligned}
\end{equation*}

\textbf{Problem (CL-LQE)} is called closed-loop solvable, if there exists an optimal control $u^{x,\bar{\Theta},\bar{v}}\in\mathcal{U}$, which together with its corresponding optimal state $X^{x,\bar{\Theta},\bar{v}}$ constitutes an optimal control-state pair. The function $V(x)=\inf\limits_{u^{x,\Theta,v}\in\mathcal{U}}\mathcal{E}(x,u^{x,\Theta,v})$ is called the value function of \textbf{Problem (CL-LQE)}.

To specify $\mathbb{U}$ in the admissible set, we introduce the mean-square exponentially stabilizable condition.
\begin{defn} For a given $\tau>0$, assume $A,C\in\mathscr{B}_\tau(\mathbb{R}^{n\times n})$ and $B,D\in\mathscr{B}_\tau(\mathbb{R}^{n\times m})$. The 4-tuple of coefficients $[A,C;B,D]$ is $\tau$-random periodic mean-square exponentially stabilizable if there exist $\Theta\in\mathscr{B}_\tau(\mathbb{R}^{m\times n})$ and $\beta,\lambda,\delta>0$ such that the homogeneous system
	\begin{equation*}
		\left\{\begin{aligned}
			\d\Phi_t&=(A_t+B_t\Theta_t)\Phi_t\d t+(C_t+D_t\Theta_t)\Phi_t\d W_t,\ \ \ t\ge 0,\\
			\Phi_0&=I_n,
		\end{aligned}\right.
	\end{equation*}
	has a unique solution $\Phi\in L_{\mathbb{F}}^{2}(0,\infty;\mathbb{R}^{n\times n})$ and for any $t\ge 0$,
	\begin{equation*}
		\begin{aligned}
			&\mathbb{E}\big|\Phi_t\big|^2\le\beta e^{-\lambda t},\\
		&\inf\limits_{r\in[0,\tau)}\mathbb{E}\Big(\int_r^\infty(\Phi_s\Phi_r^{-1})^\top\Phi_s\Phi_r^{-1}\d s\Big|\mathscr{F}_r\Big)\ge\delta I_n.
		\end{aligned}
	\end{equation*}
Here $\Theta$ is called a $\tau$-random periodic stabilizer of $[A,C;B,D]$
and the set of all $\tau$-periodic stabilizers of $[A,C;B,D]$ is denoted by $\mathscr{S}_\tau[A,C;B,D]$. Obviously, for any $\Theta\in\mathscr{S}_\tau[A,C;B,0]$, $[A+B\Theta,C]$ is $\tau$-random periodic mean-square exponentially stable.
\end{defn}

Then we give the assumptions on coefficients of state equation (\ref{wz3}) and cost functional (\ref{wz4}).

\begin{ass}
	{\textbf{(A1)}} For a given $\tau>0$, $A,B,C,D,b,Q,S,R,q,\rho\in\mathscr{B}_\tau$.\\
	{\textbf{(A2)}} $R,Q-S^\top R^{-1}S$ are uniformly positive definite.\\
	{\textbf{(A3)}}  For a given $\tau>0$, $[A,C;B,0]$ is $\tau$-random periodic mean-square exponentially stabilizable.
\end{ass}

Define
\begin{equation*}
	\begin{aligned}				\mathbb{U}=\Big\{(\Theta,v)\Big|\Theta\in\mathscr{S}_\tau[A,C;B,0],v\in\mathscr{B}_\tau(\mathbb{R}^{m})\Big\}.
	\end{aligned}
\end{equation*}
Due to \textbf{(A1)} and \textbf{(A3)}, the above $\mathbb{U}$ guarantees $(u^{x,\Theta,v}, X^{x,\Theta,v})\in L_{\mathbb{F}}^{2,loc}(0,\infty;\mathbb{R}^{m})\times L_{\mathbb{F}}^{2,loc}(0,\infty;\mathbb{R}^{n})$ and the well-posedness of ergodic cost functional, i.e. for any $T>0$,
\begin{equation*}
	\begin{aligned}		
		\frac{1}{T}\mathbb{E}\int_0^TF(t,X^{x,\Theta,v}_t,u^{x,\Theta,v}_t)\d t<\infty.
	\end{aligned}
\end{equation*}

We then establish the equivalence between random periodic mean-square exponentially stabilizability and well-posedness of a BSDE.
\begin{prop}\label{Riccati type posedness} For a given $\tau>0$, assume $A,C\in\mathscr{B}_\tau(\mathbb{R}^{n\times n})$, $B\in\mathscr{B}_\tau(\mathbb{R}^{n\times m})$ and that $Q\in \mathscr{B}_\tau(\mathbb{S}_+^n)$, $R\in\mathscr{B}_\tau(\mathbb{S}_+^m)$ are uniformly positive definite. Then $[A,C;B,0]$ is $\tau$-random periodic mean-square exponentially stabilizable if and only if BSDE
	\begin{equation}\label{Riccati type equation}
		\begin{aligned}
			\d K_t=&-\big(K_tA_t+A_t^\top K_t+C_t^\top K_tC_t+L_tC_t+C_t^\top L_t+Q_t-K_tB_tR_t^{-1}B_t^\top K_t\big)\d t\\
			&+L_t\d W_t
		\end{aligned}
	\end{equation}
admits a unique $\tau$-random periodic solution $(K,L)\in L_{\mathbb{F}}^\infty(0,\infty;\mathbb{S}^n)\times L_{\mathbb{F}}^{2,loc}(0,\infty;\mathbb{S}^n)$ and $K\in\mathscr{B}_\tau(\mathbb{S}_+^n)$ is uniformly positive definite, and moreover,
\begin{equation*}
		\begin{aligned}
			-R^{-1}B^\top K\in\mathscr{S}_\tau[A,C;B,0].
		\end{aligned}
	\end{equation*}
\end{prop}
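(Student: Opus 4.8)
The plan is to establish the two implications through a single Lyapunov-type lemma, from which the ``moreover'' assertion then drops out for free. The lemma I would isolate is this: if $\bar\Theta\in\mathscr{B}_\tau(\mathbb{R}^{m\times n})$, $\Lambda\in\mathscr{B}_\tau(\mathbb{S}_+^n)$ is uniformly positive definite, and some pair $(\bar K,\bar L)\in L_{\mathbb{F}}^\infty(0,\infty;\mathbb{S}^n)\times L_{\mathbb{F}}^{2,loc}(0,\infty;\mathbb{S}^n)$ with $\bar K$ uniformly positive definite solves BSDE \eqref{linear BSDE equation} with $A$ replaced by $A+B\bar\Theta$ and $\Lambda$ taken to be the given $\Lambda$, then $[A+B\bar\Theta,C]$ is $\tau$-random periodic mean-square exponentially stable, i.e.\ $\bar\Theta\in\mathscr{S}_\tau[A,C;B,0]$. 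Its proof mimics Proposition \ref{stable condition characterization}: letting $\Phi$ be the homogeneous solution associated with $A+B\bar\Theta$ and $C$, It\^o's formula gives $\frac{\d}{\d t}\mathbb{E}(\Phi_t^\top\bar K_t\Phi_t)=-\mathbb{E}(\Phi_t^\top\Lambda_t\Phi_t)$, whence the two-sided bounds on $\bar K$ and $\Lambda$ together with Gronwall's inequality yield $\mathbb{E}|\Phi_t|^2\le\beta e^{-\lambda t}$ (so in particular $\Phi\in L_{\mathbb{F}}^2(0,\infty;\mathbb{R}^{n\times n})$), while the conditional version of the same computation over $[r,\infty)$ produces the representation $\bar K_r=\mathbb{E}\big(\int_r^\infty(\Phi_s\Phi_r^{-1})^\top\Lambda_s\Phi_s\Phi_r^{-1}\d s\big|\mathscr{F}_r\big)$ and hence $\inf_{r\in[0,\tau)}\mathbb{E}\big(\int_r^\infty(\Phi_s\Phi_r^{-1})^\top\Phi_s\Phi_r^{-1}\d s\big|\mathscr{F}_r\big)\ge\delta I_n$ for some $\delta>0$. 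It is essential here to argue from scratch rather than to quote \eqref{linear BSDE formula}, since stability of $[A+B\bar\Theta,C]$ is the conclusion, not a hypothesis.

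For the ``only if'' direction, given a $\tau$-random periodic solution $(K,L)$ of \eqref{Riccati type equation} with $K\in\mathscr{B}_\tau(\mathbb{S}_+^n)$ uniformly positive definite, I would set $\Theta^\ast:=-R^{-1}B^\top K\in\mathscr{B}_\tau(\mathbb{R}^{m\times n})$ (bounded and $\tau$-random periodic because $R^{-1},B,K$ are). Completing the square, $-KBR^{-1}B^\top K=KB\Theta^\ast+(\Theta^\ast)^\top B^\top K+(\Theta^\ast)^\top R\Theta^\ast$, so $(K,L)$ solves BSDE \eqref{linear BSDE equation} with $A$ replaced by $A+B\Theta^\ast$ and $\Lambda$ taken to be $Q+(\Theta^\ast)^\top R\Theta^\ast\ge Q$, which is uniformly positive definite. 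The lemma then gives $\Theta^\ast\in\mathscr{S}_\tau[A,C;B,0]$; in particular $[A,C;B,0]$ is $\tau$-random periodic mean-square exponentially stabilizable, and this argument simultaneously establishes the ``moreover'' claim.

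For the ``if'' direction I would run a Newton--Kleinman iteration. Start from any $\Theta^{(0)}\in\mathscr{S}_\tau[A,C;B,0]$; given $\Theta^{(k)}\in\mathscr{S}_\tau[A,C;B,0]$, Proposition \ref{linear BSDE} applied with $A$ replaced by $A+B\Theta^{(k)}$ and $\Lambda$ taken to be $Q+(\Theta^{(k)})^\top R\Theta^{(k)}\ge Q$ furnishes a $\tau$-random periodic solution $(K^{(k)},L^{(k)})$ with $K^{(k)}\in\mathscr{B}_\tau(\mathbb{S}_+^n)$ uniformly positive definite; set $\Theta^{(k+1)}:=-R^{-1}B^\top K^{(k)}$. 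Two algebraic identities drive the recursion. First, re-expressing the driver of \eqref{linear BSDE equation} relative to a new feedback shows that whenever $K$ solves the equation with feedback $\Theta$ and $\Lambda=Q+\Theta^\top R\Theta$, it also solves it with feedback $\widehat\Theta:=-R^{-1}B^\top K$ and $\Lambda=Q+\widehat\Theta^\top R\widehat\Theta+(\Theta-\widehat\Theta)^\top R(\Theta-\widehat\Theta)\ge Q$; applying the lemma to $K^{(k)}$ and $\Theta^{(k+1)}$ shows $\Theta^{(k+1)}\in\mathscr{S}_\tau[A,C;B,0]$, so the recursion is well defined. Second, subtracting the equations for $K^{(k)}$ and $K^{(k+1)}$ written relative to the common feedback $\Theta^{(k+1)}$ shows $K^{(k)}-K^{(k+1)}$ solves \eqref{linear BSDE equation} with $A$ replaced by $A+B\Theta^{(k+1)}$ and $\Lambda$ taken to be $(\Theta^{(k)}-\Theta^{(k+1)})^\top R(\Theta^{(k)}-\Theta^{(k+1)})\in\mathscr{B}_\tau(\bar{\mathbb{S}}_+^n)$, so $K^{(k+1)}\le K^{(k)}$ by Proposition \ref{linear BSDE}. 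Thus $0\le K^{(k+1)}\le K^{(k)}\le\|K^{(0)}\|_\infty I_n$; since $\Theta^{(k)}=-R^{-1}B^\top K^{(k-1)}$ with $K^{(k-1)}\le K^{(0)}$, the feedbacks $\Theta^{(k)}$ are bounded uniformly in $k$, hence the closed-loop fundamental solutions $\Phi^{(k)}$ have uniformly bounded coefficients, which yields an elementary bound $\mathbb{E}\big(\int_r^{r+\tau}(\Phi^{(k)}_s(\Phi^{(k)}_r)^{-1})^\top\Phi^{(k)}_s(\Phi^{(k)}_r)^{-1}\d s\big|\mathscr{F}_r\big)\ge c_0 I_n$ with $c_0>0$ independent of $k$; combining this with \eqref{linear BSDE formula} and $Q\ge\delta_0 I_n$ gives the uniform lower bound $K^{(k)}\ge\delta_0 c_0 I_n$.

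It remains to pass to the limit. The monotone bounded sequence $K^{(k)}$ converges pointwise almost surely to a limit $K$ with $\delta_0 c_0 I_n\le K\le\|K^{(0)}\|_\infty I_n$, which is $\tau$-random periodic and bounded, hence in $\mathscr{B}_\tau(\mathbb{S}_+^n)$ and uniformly positive definite; moreover $\Theta^{(k)}\to\Theta^\ast:=-R^{-1}B^\top K$ pointwise almost surely and boundedly. Working on $[0,\tau]$ with the random-periodic terminal condition $K^{(k)}_\tau=\theta_\tau\circ K^{(k)}_0$, It\^o's formula applied to $|K^{(k)}_t-K^{(j)}_t|^2$ together with the uniform bounds shows $\{L^{(k)}\}$ is Cauchy in $L_{\mathbb{F}}^2(0,\tau;\mathbb{S}^n)$; let $L$ be its limit, extended $\tau$-random periodically to $L_{\mathbb{F}}^{2,loc}(0,\infty;\mathbb{S}^n)$. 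Dominated convergence for the bounded drift terms and $L^2$-convergence for the $L$-terms then allow passage to the limit in the equation for $K^{(k)}$, yielding that $(K,L)$ solves \eqref{linear BSDE equation} with feedback $\Theta^\ast$ and $\Lambda=Q+(\Theta^\ast)^\top R\Theta^\ast$, which by completing the square is precisely \eqref{Riccati type equation}. Uniqueness among $\tau$-random periodic solutions with $K$ uniformly positive definite follows by the same subtraction device: for two such solutions $(K^1,L^1),(K^2,L^2)$ with $\Theta^i:=-R^{-1}B^\top K^i$, rewriting both equations relative to $\Theta^2$ shows $K^2-K^1$ solves \eqref{linear BSDE equation} with $A$ replaced by $A+B\Theta^2$ and $\Lambda$ taken to be $(\Theta^1-\Theta^2)^\top R(\Theta^1-\Theta^2)\ge0$, so $K^2\ge K^1$ by Proposition \ref{linear BSDE}, and by symmetry $K^1=K^2$, whence $L^1=L^2$ from the equation. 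In my view the two delicate points are establishing the Lyapunov lemma independently of Proposition \ref{linear BSDE} (to avoid circularity) and securing the uniform-in-$k$ positive lower bound on $K^{(k)}$, without which the limit $K$ could degenerate and the lemma could not be reapplied to $\Theta^\ast$; the BSDE limit passage itself is routine once these uniform bounds are in hand.
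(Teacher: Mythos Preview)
Your proposal is correct and follows essentially the same architecture as the paper: the ``Lyapunov lemma'' you isolate is exactly Proposition~\ref{stable condition characterization} (applied with $A$ replaced by $A+B\bar\Theta$), so there is no need to reprove it---that proposition already takes the BSDE solution as hypothesis and concludes stability, not the other way around; the concern about circularity via \eqref{linear BSDE formula} is overstated since the exponential decay is established first in that proof, after which the representation follows by the same It\^o computation. Note also that you have the labels ``if'' and ``only if'' reversed relative to the statement as written.

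The one substantive variation is your treatment of the uniform positive definiteness of the limit $K$. You secure a uniform-in-$k$ lower bound $K^{(k)}\ge\delta_0 c_0 I_n$ \emph{before} passing to the limit, via an elementary bounded-coefficient estimate on the conditional integral $\mathbb{E}\big(\int_r^{r+\tau}(\Phi^{(k)}_s(\Phi^{(k)}_r)^{-1})^\top\Phi^{(k)}_s(\Phi^{(k)}_r)^{-1}\,\d s\,\big|\,\mathscr{F}_r\big)$. The paper instead passes to the limit first (getting only $K\ge0$ a priori) and then argues post hoc: since $\Theta^N\to-R^{-1}B^\top K$, it picks $\widehat N$ with $Q-(\Theta^{\widehat N}+R^{-1}B^\top K)^\top R(\Theta^{\widehat N}+R^{-1}B^\top K)$ still uniformly positive definite, rewrites \eqref{Riccati type equation} relative to the feedback $\Theta^{\widehat N}$, and invokes Proposition~\ref{linear BSDE} once more. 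Your route is arguably cleaner, as it sidesteps the question of why pointwise convergence of $\Theta^N$ suffices to make that residual uniformly small in $(t,\omega)$.
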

\begin{proof} Sufficiency. Suppose that \eqref{Riccati type equation} admits a unique $\tau$-random periodic solution $(K,L)\in L_{\mathbb{F}}^\infty(0,\infty;\mathbb{S}^n)\times L_{\mathbb{F}}^{2,loc}(0,\infty;\mathbb{S}^n)$ and $K\in\mathscr{B}_\tau(\mathbb{S}_+^n)$ is uniformly positive definite. Set
		\begin{equation*}
			\begin{aligned}
				\Theta_t\triangleq-R_t^{-1}B_t^\top K_t,\ \ \ t\geq0.
			\end{aligned}
		\end{equation*}
		Then \eqref{Riccati type equation} is rewritten to
		\begin{equation}\label{wz30}
			\begin{aligned}
				\d K_t=&-\Big(K_t(A_t+B_t\Theta_t)+(A_t+B_t\Theta_t)^\top K_t+C_t^\top K_tC_t+L_tC_t+C_t^\top L_t+Q_t\\
				&+K_tB_tR_t^{-1}B_t^\top K_t\Big)\d t+L_t\d W_t.
			\end{aligned}
		\end{equation}
		By Proposition \ref{stable condition characterization}, $[A+B\Theta,C]$ is $\tau$-random periodic mean-square exponentially stable, and hence $[A,C;B,0]$ is $\tau$-random periodic mean-square exponentially stabilizable.
		
		Necessity. Suppose that $[A,C;B,0]$ is $\tau$-random periodic mean-square exponentially stabilizable.
		
		We first prove the existence of solution to \eqref{Riccati type equation}. Let $\Theta^0\in\mathscr{S}_\tau[A,C;B,0]$. By Proposition \ref{linear BSDE}, BSDE
		\begin{equation}
			\begin{aligned}\label{equation 1}
				\d K^1_t=&-\Big(K^1_t(A_t+B_t\Theta^0_t)+(A_t+B_t\Theta^0_t)^\top K^1_t+C_t^\top K^1_tC_t+L^1_tC_t+C_t^\top L^1_t\\
				&+Q_t+(\Theta^0_t)^\top R_t\Theta^0_t\Big)\d t+L^1_t\d W_t
			\end{aligned}
		\end{equation}
		has a unique $\tau$-random periodic solution $(K^1,L^1)\in L_{\mathbb{F}}^\infty(0,\infty;\mathbb{S}^n)\times L_{\mathbb{F}}^{2,loc}(0,\infty;\mathbb{S}^n)$, and $K^1\in\mathscr{B}_\tau(\mathbb{S}_+^n)$ is uniformly positive definite.	Define
		\begin{equation*}
			\begin{aligned}
				\Theta^1_t=-R_t^{-1}B_t^\top K^1_t,\ \ \ t\geq0.
			\end{aligned}
		\end{equation*}
		Then \eqref{equation 1} becomes
		\begin{equation*}
			\begin{aligned}
				\d K^1_t=&-\Big(K^1_t(A_t+B_t\Theta^1_t)+(A_t+B_t\Theta^1_t)^\top K^1_t+C_t^\top K^1_tC_t+L^1_tC_t+C_t^\top L^1_t+Q_t\\
				&\ \ \ \ \ +(\Theta^1_t)^\top R_t\Theta^1_t+(\Theta^0_t-\Theta^1_t)^\top R_t(\Theta^0_t-\Theta^1_t)\Big)\d t+L^1_t\d W_t.
			\end{aligned}
		\end{equation*}
		By Proposition \ref{stable condition characterization}, $\Theta^1\in\mathscr{S}_\tau[A,C;B,0]$. For any $N\in\mathbb{N}^+$, define
		\begin{equation*}
			\begin{aligned}
				\Theta^N_t=-R_t^{-1}B_t^\top K^N_t,\ \ \ t\geq0,
			\end{aligned}
		\end{equation*}
		and a sequence of BSDEs
		\begin{equation}\label{iterative equation}
			\begin{aligned}
				\d K^{N+1}_t=&-\Big(K^{N+1}_t(A_t+B_t\Theta^N_t)+(A_t+B_t\Theta^N_t)^\top K^{N+1}_t+C_t^\top K^{N+1}_tC_t\\
				&\ \ \ \ \ +L^{N+1}_tC_t+C_t^\top L^{N+1}_t+Q_t+(\Theta^N_t)^\top R_t\Theta^N_t\Big)\d t+L^{N+1}_t\d W_t.
			\end{aligned}
		\end{equation}
		By induction, for any $N\in\mathbb{N}^+$, $\Theta^N\in\mathscr{S}_\tau[A,C;B,0]$, and \eqref{iterative equation} has a unique $\tau$-random periodic solution $(K^{N+1},L^{N+1})\in L_{\mathbb{F}}^\infty(0,\infty;\mathbb{S}^n)\times L_{\mathbb{F}}^{2,loc}(0,\infty;\mathbb{S}^n)$, and $K^{N+1}\in\mathscr{B}_\tau(\mathbb{S}_+^n)$ is uniformly positive definite.
		
		Note that for any $N\in\mathbb{N}^+$,
		\begin{equation*}
			\begin{aligned}
				&\d (K^N_t-K^{N+1}_t)\\
				=&-\Big((K^N_t-K^{N+1}_t)(A_t+B_t\Theta^N_t)+(A_t+B_t\Theta^N_t)^\top(K^N_t-K^{N+1}_t)\\
				&\ \ \ \ \ +C_t^\top (K^N_t-K^{N+1}_t)C_t+(L^N_t-L^{N+1}_t)C_t+C_t^\top (L^N_t-L^{N+1}_t)\\
				&\ \ \ \ \ +(\Theta^{N-1}_t-\Theta^N_t)^\top R_t(\Theta^{N-1}_t-\Theta^N_t)\Big)\d t+(L^N_t-L^{N+1}_t)\d W_t.
			\end{aligned}
		\end{equation*}
		It follows from Proposition \ref{linear BSDE} that, for any $t\ge0$,
		$K_t^N-K_t^{N+1}\ge 0$ a.s. By the monotone convergence theorem, there exists $K\in L_{\mathbb{F}}^\infty(0,\tau;\mathbb{S}^n)$ such that for any $0\leq t\leq\tau$, $K_t^N\rightarrow K_t$ as $N\rightarrow\infty$.
		On the other hand, for any $N_1,N_2\in\mathbb{N}^+$ and $N_1<N_2$,
		applying It\^{o} formula to $\big|K^{N_2+1}_t-K^{N_1+1}_t\big|^2$, we have
	\begin{equation*}
				\begin{aligned}			&\mathbb{E}\big|K^{N_2+1}_0-K^{N_1+1}_0\big|^2+\mathbb{E}\int_0^\tau\big|L^{N_2+1}_s-L^{N_1+1}_s\big|^2\d s\\
					=&\mathbb{E}\big|K^{N_2+1}_{\tau}-K^{N_1+1}_{\tau}\big|^2+2\mathbb{E}\int_0^\tau\text{tr}\Big((K^{N_2+1}_s-K^{N_1+1}_s)\Big((K^{N_2+1}_s-K^{N_1+1}_s)(A_s+B_s\Theta^{N_1}_s)\\
					&\ \ \ \ \ \ \ \ \ \ \ \ \ \ \ \ \ \ \ \   +(A_s+B_s\Theta^{N_1}_s)^\top(K^{N_2+1}_s-K^{N_1+1}_s)+K^{N_2+1}_sB_s(\Theta^{N_2}_s-\Theta^{N_1}_s)\\
					&\ \ \ \ \ \ \ \ \ \ \ \ \ \ \ \ \ \ \ \ +(\Theta^{N_2}_s-\Theta^{N_1}_s)^\top B_s^\top K^{N_2+1}_s+C_s^\top (K^{N_2+1}_s-K^{N_1+1}_s)C_s+(L^{N_2+1}_s-L^{N_1+1}_s)C_s\\
					&\ \ \ \ \ \ \ \ \ \ \ \ \ \ \ \ \ \ \ \ +C_s^\top (L^{N_2+1}_s-L^{N_1+1}_s)+(\Theta^{N_1}_s)^\top R_s(\Theta^{N_2}_s-\Theta^{N_1}_s)+(\Theta^{N_2}_s-\Theta^{N_1}_s)^\top R_s\Theta^{N_1}_s\\
					&\ \ \ \ \ \ \ \ \ \ \ \ \ \ \ \ \ \ \ \ +(\Theta^{N_2}_s-\Theta^{N_1}_s)^\top R_s(\Theta^{N_2}_s-\Theta^{N_1}_s)\Big)\Big)\d s\\
					\le&\mathbb{E}\big|K^{N_2+1}_{\tau}-K^{N_1+1}_{\tau}\big|^2+\frac{1}{2}\mathbb{E}\int_0^\tau\big|L^{N_2+1}_s-L^{N_1+1}_s\big|^2\d s\\
					&+c\mathbb{E}\int_0^\tau\big(\big|K^{N_2+1}_s-K^{N_1+1}_s\big|^2+\big|\Theta^{N_2}_s-\Theta^{N_1}_s\big|^2\big)\d s.
				\end{aligned}
		\end{equation*}
		Note that for any $0\leq t\leq\tau$, $\Theta^N_t\rightarrow-R_t^{-1}B_t^\top K_t$ as $N\rightarrow\infty$.
		By the dominated convergence theorem, for any $t\in[0,\tau]$, as $N_1,N_2\rightarrow\infty$,
		\begin{equation}\label{convergence of K}
			\begin{aligned}
				\mathbb{E}\big|K_t^{N_2+1}-K_t^{N_1+1}\big|^2\rightarrow 0\ \ {\text and}\ \ \mathbb{E}\int_0^\tau\big(\big|K^{N_2+1}_s-K^{N_1+1}_s\big|^2+\big|\Theta^{N_2}_s-\Theta^{N_1}_s\big|^2\big)\d s\rightarrow 0.
			\end{aligned}
		\end{equation}
		Hence as $N_1,N_2\rightarrow\infty$, we get from \eqref{convergence of K} that
		$\mathbb{E}\int_0^\tau\big|L^{N_2+1}_s-L^{N_1+1}_s\big|^2\d s\rightarrow 0$.
		This yields that $\big\{L^N\big\}_{N\in\mathbb{N}^+}$ is a Cauchy sequence in $L_{\mathbb{F}}^2(0,\tau;\mathbb{S}^n)$, and thus there exists an $L\in L_{\mathbb{F}}^2(0,\tau;\mathbb{S}^n)$ such that as $N\rightarrow\infty$,
		$\mathbb{E}\int_0^\tau\big|L^{N}_s-L_s\big|^2\d s\rightarrow 0$.

		Then we extend the definition of $(K,L)$ to $L_{\mathbb{F}}^\infty(0,\infty;\mathbb{S}^n)\times L_{\mathbb{F}}^{2,loc}(0,\infty;\mathbb{S}^n)$. For any $k\in\mathbb{N}^+$, define
		\begin{equation*}
			\begin{aligned}
				K_t=\theta_{k\tau}\circ K_{t-k\tau},\ \ \ L_t=\theta_{k\tau}\circ L_{t-k\tau},\ \ \ t\in(k\tau,(k+1)\tau],
			\end{aligned}
		\end{equation*}
		and as $N\rightarrow\infty$,
		\begin{equation*}
			\begin{aligned}
				&K^N_t=\theta_{k\tau}\circ K^N_{t-k\tau}\rightarrow\theta_{k\tau}\circ K_{t-k\tau}
			\end{aligned}
		\end{equation*}
		and
		\begin{equation*}
			\begin{aligned}
				&\mathbb{E}\int_{k\tau}^{(k+1)\tau}\big|L^N_s-\theta_{k\tau}\circ L_{s-k\tau}\big|^2\d s=\mathbb{E}\int_0^\tau\big|\theta_{k\tau}\circ L^N_s-\theta_{k\tau}\circ L_s\big|^2\d s\\
				=&\mathbb{E}\int_0^\tau\big|L^N_s-L_s\big|^2\d s\rightarrow 0.
			\end{aligned}
		\end{equation*}
		The above convergence together with the $\tau$-random periodicity of $A,B,C,Q,R$ implies that $(K,L)\in L_{\mathbb{F}}^\infty(0,\infty;\mathbb{S}^n)\times L_{\mathbb{F}}^{2,loc}(0,\infty;\mathbb{S}^n)$ is $\tau$-random periodic solution of \eqref{Riccati type equation}.
		
		As for the uniqueness of solution to \eqref{Riccati type equation}. Let $(K,L), (K',L')\in L_{\mathbb{F}}^\infty(0,\infty;\mathbb{S}^n)\times L_{\mathbb{F}}^{2,loc}(0,\infty;\mathbb{S}^n)$ be two $\tau$-random periodic solutions of \eqref{Riccati type equation}. Set
		\begin{equation*}
			\begin{aligned}
				\Theta_t\triangleq-R_t^{-1}B_t^\top K_t\ \ {\text and}\ \ \Theta'_t\triangleq-R_t^{-1}B_t^\top K'_t,\ \ \ t\geq0.
			\end{aligned}
		\end{equation*}
		Then $(K-K',L-L')$ satisfies BSDE
		\begin{equation*}\label{wz29}
			\begin{aligned}
				&\d (K_t-K'_t)\\
				=&-\Big((K_t-K'_t)(A_t+B_t\Theta_t)+(A_t+B_t\Theta_t)^\top(K_t-K'_t)+C_t^\top (K_t-K'_t)C_t\\
				&+(L_t-L'_t)C_t+C_t^\top (L_t-L'_t)+(\Theta_t-\Theta'_t)^\top R_t(\Theta_t-\Theta'_t)\Big)\d t+(L_t-L'_t)\d W_t.
			\end{aligned}
		\end{equation*}
		By Proposition \ref{linear BSDE}, for any $t\ge0$, $K_t\ge K'_t$ a.s.
		and similarly, $K'_t\ge K_t$ a.s. Thus for any $t\ge0$, $K_t=K'_t$ a.s.
Applying It\^{o} formula to $\big|K_t-K'_t\big|^2$, we have
		\begin{equation*}
			\begin{aligned}
				\mathbb{E}\big|K_0-K'_0\big|^2+\mathbb{E}\int_0^\tau\big|L_s-L'_s\big|^2\d s
				\le\mathbb{E}\big|K_\tau-K'_\tau\big|^2+\frac{1}{2}\mathbb{E}\int_0^\tau\big|L_s-L'_s\big|^2\d s+c \mathbb{E}\int_0^\tau\big|K_s-K'_s\big|^2\d s.
			\end{aligned}
		\end{equation*}
		Then for any $t\in[0,\tau]$, $L_t=L'_t$ a.s.
Due to the $\tau$-random periodicity of $L$, it yields that for any $t\ge0$,
$L_t=L'_t$ a.s.
		
Finally,	since $Q\in \mathscr{B}_\tau(\mathbb{S}_+^n)$ is uniformly positive definite and  $\Theta^N_t\rightarrow-R_t^{-1}B_t^\top K_t$ as $N\rightarrow\infty$, there exists a $\widehat{N}\in\mathbb{N}$ such that $Q-(\Theta^{\widehat{N}}+R^{-1}B^\top K)^\top R(\Theta^{\widehat{N}}+R^{-1}B^\top K)\in \mathscr{B}_\tau(\mathbb{S}_+^n)$ is uniformly positive definite.
We rewrite \eqref{Riccati type equation} to
		\begin{equation*}
			\begin{aligned}
				\d K_t=&-\Big(K_t(A_t+B_t\Theta^{\widehat{N}}_t)+(A_t+B_t\Theta^{\widehat{N}}_t)^\top K_t+C_t^\top K_tC_t+L_tC_t+C_t^\top L_t+Q_t\\
				&+(\Theta^{\widehat{N}}_t)^\top R_t\Theta^{\widehat{N}}_t-(\Theta^{\widehat{N}}_t+R_t^{-1}B_t^\top K_t)^\top R_t(\Theta^{\widehat{N}}_t+R_t^{-1}B_t^\top K_t)\Big)\d t+L_t\d W_t.
			\end{aligned}
		\end{equation*}
By Proposition \ref{linear BSDE}, 	$K\in \mathscr{B}_\tau(\mathbb{S}_+^n)$ is uniformly positive definite. Moreover, noticing \eqref{wz30} which also comes from \eqref{Riccati type equation}, we further know $-R^{-1}B^\top K\in\mathscr{S}_\tau[A,C;B,0]$ by Proposition \ref{stable condition characterization}.
	\end{proof}

Based on Proposition \ref{Riccati type posedness}, the equivalence between random periodic mean-square exponentially stabilizability and well-posedness of the stochastic Riccati equation in \textbf{Problem (CL-LQE)} can also be obtained.
\begin{cor}\label{Riccati posedness} For a given $\tau>0$, assume $A,C\in\mathscr{B}_\tau(\mathbb{R}^{n\times n})$, $B\in\mathscr{B}_\tau(\mathbb{R}^{n\times m})$, $Q\in \mathscr{B}_\tau(\mathbb{S}^n)$, $R\in\mathscr{B}_\tau(\mathbb{S}^m)$, $S\in \mathscr{B}_\tau(\mathbb{R}^{m\times n})$ and that $Q-S^\top R^{-1}S\in\mathscr{B}_\tau(\mathbb{S}_+^n)$, $R\in\mathscr{B}_\tau(\mathbb{S}_+^m)$ are uniformly positive definite. Then $[A,C;B,0]$ is $\tau$-random periodic mean-square exponentially stabilizable if and only if the stochastic Riccati equation
	\begin{equation}\label{Riccati equation}
		\begin{aligned}
			\d K_t=&-\Big(K_tA_t+A_t^\top K_t+C_t^\top K_tC_t+L_tC_t+C_t^\top L_t+Q_t-(B_t^\top K_t+S_t)^\top R_t^{-1}(B_t^\top K_t+S_t)\Big)\d t\\
&+L_t\d W_t
		\end{aligned}
	\end{equation}
admits a unique $\tau$-random periodic solution $(K,L)\in L_{\mathbb{F}}^\infty(0,\infty;\mathbb{S}^n)\times L_{\mathbb{F}}^{2,loc}(0,\infty;\mathbb{S}^n)$ and $K\in\mathscr{B}_\tau(\mathbb{S}_+^n)$ is uniformly positive definite, and moreover,
	\begin{equation*}
		\begin{aligned}
			-R^{-1}(B^\top K+S)\in\mathscr{S}_\tau[A,C;B,0].
		\end{aligned}
	\end{equation*}
\end{cor}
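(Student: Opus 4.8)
The plan is to reduce Corollary \ref{Riccati posedness} to Proposition \ref{Riccati type posedness} by the classical substitution that removes the cross term $S$ from the cost functional. Set
\[
	\widehat{A}_t := A_t - B_t R_t^{-1} S_t, \qquad \widehat{Q}_t := Q_t - S_t^\top R_t^{-1} S_t, \qquad t \ge 0 .
\]
First I would check that the triple $[\widehat{A}, C; B, 0]$, $\widehat{Q}$, $R$ satisfies the hypotheses of Proposition \ref{Riccati type posedness}. Since $R \in \mathscr{B}_\tau(\mathbb{S}_+^m)$ is uniformly positive definite, $R \ge \alpha I_m$ for some $\alpha>0$, so $R^{-1}$ is bounded; and matrix inversion commutes with composition by $\theta_\tau$ and preserves the relation $R_{t+\tau} = \theta_\tau \circ R_t$, so $R^{-1} \in \mathscr{B}_\tau(\mathbb{S}_+^m)$. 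Consequently $\widehat{A} \in \mathscr{B}_\tau(\mathbb{R}^{n\times n})$ and $S^\top R^{-1} S \in \mathscr{B}_\tau(\bar{\mathbb{S}}_+^n)$, while the hypothesis already grants that $\widehat{Q} = Q - S^\top R^{-1} S \in \mathscr{B}_\tau(\mathbb{S}_+^n)$ is uniformly positive definite and $R \in \mathscr{B}_\tau(\mathbb{S}_+^m)$ is uniformly positive definite.

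Second, I would record the one-to-one correspondence of stabilizers. For any $\mathbb{R}^{m\times n}$-valued $\Theta$ one has the algebraic identity $A_t + B_t \Theta_t = \widehat{A}_t + B_t(\Theta_t + R_t^{-1} S_t)$, so the homogeneous system attached to $\Theta$ for the $4$-tuple $[A, C; B, 0]$ coincides with the one attached to $\Theta + R^{-1} S$ for $[\widehat{A}, C; B, 0]$ (the diffusion coefficient being $C$ in both cases, since there is no $D$-term). As $R^{-1} S \in \mathscr{B}_\tau(\mathbb{R}^{m\times n})$, this gives $\Theta \in \mathscr{S}_\tau[A, C; B, 0] \iff \Theta + R^{-1} S \in \mathscr{S}_\tau[\widehat{A}, C; B, 0]$; in particular $[A, C; B, 0]$ is $\tau$-random periodic mean-square exponentially stabilizable if and only if $[\widehat{A}, C; B, 0]$ is.

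Third, I would verify by completing the square that the stochastic Riccati equation \eqref{Riccati equation} is exactly equation \eqref{Riccati type equation} of Proposition \ref{Riccati type posedness} written for the data $[\widehat{A}, C; B, 0]$, $\widehat{Q}$, $R$. Expanding
\[
	(B_t^\top K_t + S_t)^\top R_t^{-1}(B_t^\top K_t + S_t) = K_t B_t R_t^{-1} B_t^\top K_t + K_t B_t R_t^{-1} S_t + S_t^\top R_t^{-1} B_t^\top K_t + S_t^\top R_t^{-1} S_t
\]
and using that $R$ is symmetric, the drift of \eqref{Riccati equation} collapses to $K_t \widehat{A}_t + \widehat{A}_t^\top K_t + C_t^\top K_t C_t + L_t C_t + C_t^\top L_t + \widehat{Q}_t - K_t B_t R_t^{-1} B_t^\top K_t$, which is precisely the drift of \eqref{Riccati type equation} with $(A,Q)$ replaced by $(\widehat{A},\widehat{Q})$. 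Hence \eqref{Riccati equation} admits a unique $\tau$-random periodic solution $(K,L)\in L_{\mathbb{F}}^\infty(0,\infty;\mathbb{S}^n)\times L_{\mathbb{F}}^{2,loc}(0,\infty;\mathbb{S}^n)$ with $K\in\mathscr{B}_\tau(\mathbb{S}_+^n)$ uniformly positive definite if and only if the same is true for \eqref{Riccati type equation} with the hatted data.

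Finally I would chain the equivalences: applying Proposition \ref{Riccati type posedness} to $[\widehat{A}, C; B, 0]$, $\widehat{Q}$, $R$, the stabilizability of $[A,C;B,0]$ is equivalent to that of $[\widehat{A},C;B,0]$, which is equivalent to the unique $\tau$-random periodic solvability of \eqref{Riccati type equation} (hatted data) with $K$ uniformly positive definite, which is equivalent to the corresponding statement for \eqref{Riccati equation}. Moreover Proposition \ref{Riccati type posedness} yields $-R^{-1}B^\top K\in\mathscr{S}_\tau[\widehat{A},C;B,0]$, so by the stabilizer correspondence of the second step, $-R^{-1}B^\top K - R^{-1}S = -R^{-1}(B^\top K + S)\in\mathscr{S}_\tau[A,C;B,0]$, which is the last assertion. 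There is no genuine obstacle here; the only points needing care are verifying that $R^{-1}$, and hence $\widehat{A}$, stays in $\mathscr{B}_\tau$ (boundedness from uniform positive definiteness, $\tau$-periodicity because $\theta_\tau$ acts on $\Omega$ and commutes with pointwise matrix algebra) and keeping the completing-the-square bookkeeping straight.
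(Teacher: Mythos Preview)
Your proof is correct and follows essentially the same approach as the paper: both reduce Corollary \ref{Riccati posedness} to Proposition \ref{Riccati type posedness} via the substitution $\widehat{A}=A-BR^{-1}S$, $\widehat{Q}=Q-S^\top R^{-1}S$, rewrite \eqref{Riccati equation} as \eqref{Riccati type equation} with the hatted data, and use the equivalence of stabilizability of $[A,C;B,0]$ and $[\widehat{A},C;B,0]$. Your version is simply more detailed in verifying that $R^{-1}\in\mathscr{B}_\tau$, spelling out the stabilizer bijection $\Theta\mapsto\Theta+R^{-1}S$, and deriving the final inclusion $-R^{-1}(B^\top K+S)\in\mathscr{S}_\tau[A,C;B,0]$ explicitly.
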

\begin{proof} Set
	\begin{equation*}
		\begin{aligned}
			\widetilde{A}_t\triangleq A_t-B_tR^{-1}_tS_t,\ \ \ \widetilde{Q}_t\triangleq Q_t-S_t^\top R^{-1}_tS_t,\ \ \ t\geq0.
		\end{aligned}
	\end{equation*}
Then \eqref{Riccati equation} can be rewritten as
	\begin{equation*}
		\begin{aligned}
			\d K_t=-\big(K_t\widetilde{A}_t+\widetilde{A}_t^\top K_t+C_t^\top K_tC_t+L_tC_t+C_t^\top L_t+\widetilde{Q}_t-K_tB_tR_t^{-1}B_t^\top K_t\big)\d t+L_t\d W_t.
		\end{aligned}
	\end{equation*}
Note that $[A,C;B,0]$ is $\tau$-random periodic mean-square exponentially stabilizable is equivalent to $[\widetilde{A},C;B,0]$ is $\tau$-random periodic mean-square exponentially stabilizable, which together with Proposition \ref{Riccati type posedness} proves the desired result.
\end{proof}

\section{The explicit optimal controls}

Revisit the closed-loop state equation \eqref{wz2}, and according to Proposition \ref{random periodic SDE} we immediately have the following result for state equation.
\begin{pro}\label{wz31} For a given $\tau>0$, assume $A,C\in\mathscr{B}_\tau(\mathbb{R}^{n\times n})$, $B\in\mathscr{B}_\tau(\mathbb{R}^{n\times m})$ and that $[A,C;B,0]$ is $\tau$-random periodic mean-square exponentially stabilizable. \\ Then for any $(\Theta,v)\in\mathbb{U}$, there exists a unique $\mathbb{F}$-independent $\xi^{\Theta,v}\in L_{\mathscr{F}}^2(\Omega;\mathbb{R}^n)$ such that $X^{\xi^{\Theta,v},\Theta,v}$ is the solution to the state equation \eqref{wz2} with the initial time $0$ and initial state $\xi^{\Theta,v}$ satisfying for any $t\ge 0$,
\begin{equation*}\label{random periodic property closed loop}
	\begin{aligned}
		X^{\xi^{\Theta,v},\Theta,v}_{t+\tau}=\theta_{\tau}\circ X^{\xi^{\Theta,v},\Theta,v}_t\ \ \ {\text a.s.}
	\end{aligned}
\end{equation*}
\end{pro}

Then we prove that the cost functional $\mathcal{E}(x,u)$ on infinite horizon is equivalent to a cost functional on finite horizon.
\begin{thm}\label{transffered form by random periodic solution} Assume {\textbf{(A1)}}, {\textbf{(A3)}} and that $X^{x,\Theta,v}$ and $X^{\xi^{\Theta,v},\Theta,v}$, $(\Theta,v)\in\mathbb{U}$, are solutions to the state equations \eqref{wz2} with initial state $x\in\mathbb{R}^n$ and $\xi^{\Theta,v}\in L_{\mathscr{F}}^2(\Omega;\mathbb{R}^n)$, respectively, where $\xi^{\Theta,v}$ is given in Proposition \ref{wz31}. Then
	\begin{equation*}
		\begin{aligned}
			\mathcal{E}(x,u^{x,\Theta,v})=\varliminf\limits_{T \to \infty}\frac{1}{T}\mathbb{E}\int_0^TF(t,X^{x,\Theta,v}_t,u^{x,\Theta,v}_t)\d t
			=\frac{1}{\tau}\mathbb{E}\int_0^\tau F(t,X^{\xi^{\Theta,v},\Theta,v}_t,u^{\xi^{\Theta,v},\Theta,v}_t)\d t,
		\end{aligned}
	\end{equation*}
where $u^{x,\Theta,v}_t=\Theta_tX^{x,\Theta,v}_t
	+v_t$ and $u_t^{\xi^{\Theta,v},\Theta,v}=\Theta_tX^{\xi^{\Theta,v},\Theta,v}_t
	+v_t$.
\end{thm}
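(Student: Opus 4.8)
The plan is to split the identity into two steps: first to show that the finite–horizon average of $F$ evaluated along the random periodic trajectory $X^{\xi^{\Theta,v},\Theta,v}$ has a genuine limit equal to $\frac1\tau\mathbb{E}\int_0^\tau F$, and then to show that replacing $X^{\xi^{\Theta,v},\Theta,v}$ by the trajectory $X^{x,\Theta,v}$ issued from the deterministic point $x$ does not change the $\varliminf$.

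For the first step I would argue that the map $t\mapsto\mathbb{E}F(t,X^{\xi^{\Theta,v},\Theta,v}_t,u^{\xi^{\Theta,v},\Theta,v}_t)$ is $\tau$-periodic. Indeed, by Proposition \ref{wz31} we have $X^{\xi^{\Theta,v},\Theta,v}_{t+\tau}=\theta_\tau\circ X^{\xi^{\Theta,v},\Theta,v}_t$, and since $\Theta,v\in\mathscr{B}_\tau$ and $\theta_\tau$ is compatible with pointwise algebraic operations, also $u^{\xi^{\Theta,v},\Theta,v}_{t+\tau}=\Theta_{t+\tau}X^{\xi^{\Theta,v},\Theta,v}_{t+\tau}+v_{t+\tau}=\theta_\tau\circ u^{\xi^{\Theta,v},\Theta,v}_t$. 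Combining this with the $\tau$-random periodicity of $Q,S,R,q,\rho$ from \textbf{(A1)} gives $F(t+\tau,X^{\xi^{\Theta,v},\Theta,v}_{t+\tau},u^{\xi^{\Theta,v},\Theta,v}_{t+\tau})=\theta_\tau\circ F(t,X^{\xi^{\Theta,v},\Theta,v}_t,u^{\xi^{\Theta,v},\Theta,v}_t)$, and the measure preserving property $\mathbb{P}\cdot\theta_\tau^{-1}=\mathbb{P}$ yields the periodicity of the expectation. Consequently $\mathbb{E}\int_{k\tau}^{(k+1)\tau}F=\mathbb{E}\int_0^\tau F$ for every $k\in\mathbb{N}$; since $X^{\xi^{\Theta,v},\Theta,v},u^{\xi^{\Theta,v},\Theta,v}\in L^{2,loc}_{\mathbb{F}}$ and the coefficients are bounded we have $\mathbb{E}\int_0^\tau|F(t,X^{\xi^{\Theta,v},\Theta,v}_t,u^{\xi^{\Theta,v},\Theta,v}_t)|\,\d t<\infty$, so writing $T=N\tau+s$ with $0\le s<\tau$ gives $\frac1T\mathbb{E}\int_0^T F\to\frac1\tau\mathbb{E}\int_0^\tau F$ as $T\to\infty$ (a true limit, not merely a $\varliminf$).

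For the second step, set $\Delta_t\triangleq X^{x,\Theta,v}_t-X^{\xi^{\Theta,v},\Theta,v}_t$. The closed-loop equation \eqref{wz2} fits the hypotheses of Corollary \ref{priori estimate zero intial time} with coefficients $[A+B\Theta,C]$, which is $\tau$-random periodic mean-square exponentially stable because $\Theta\in\mathscr{S}_\tau[A,C;B,0]$, and inhomogeneous terms $Bv+b,\sigma\in\mathscr{B}_\tau$. Since $x$ is deterministic (hence $\mathbb{F}$-independent) and $\xi^{\Theta,v}$ is $\mathbb{F}$-independent, \eqref{delta square estimate zero intial time} gives $\mathbb{E}|\Delta_t|^2\le\beta e^{-\lambda t}\mathbb{E}|x-\xi^{\Theta,v}|^2\le c e^{-\lambda t}$, while \eqref{square estimate zero intial time} (or the periodicity of $t\mapsto\mathbb{E}|X^{\xi^{\Theta,v},\Theta,v}_t|^2$) gives $\sup_{t\ge0}\bigl(\mathbb{E}|X^{x,\Theta,v}_t|^2+\mathbb{E}|X^{\xi^{\Theta,v},\Theta,v}_t|^2\bigr)<\infty$. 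Substituting $u=\Theta X+v$ into $F$ turns the integrand into a quadratic form in $X$ alone with bounded $\tau$-random periodic coefficients, so the difference $F(t,X^{x,\Theta,v}_t,u^{x,\Theta,v}_t)-F(t,X^{\xi^{\Theta,v},\Theta,v}_t,u^{\xi^{\Theta,v},\Theta,v}_t)$ is a (bounded symmetric coefficient) paired with $\langle\Delta_t,X^{x,\Theta,v}_t+X^{\xi^{\Theta,v},\Theta,v}_t\rangle$ plus a (bounded coefficient) paired with $\Delta_t$; Cauchy–Schwarz and the two bounds above give $\bigl|\mathbb{E}\bigl[F(t,X^{x,\Theta,v}_t,u^{x,\Theta,v}_t)-F(t,X^{\xi^{\Theta,v},\Theta,v}_t,u^{\xi^{\Theta,v},\Theta,v}_t)\bigr]\bigr|\le c(\mathbb{E}|\Delta_t|^2)^{1/2}\le c e^{-\lambda t/2}$. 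Integrating over $[0,T]$ bounds the difference of the two cost functionals by a constant independent of $T$, so dividing by $T$ and taking $\varliminf$ shows $\varliminf_{T\to\infty}\frac1T\mathbb{E}\int_0^T F(t,X^{x,\Theta,v}_t,u^{x,\Theta,v}_t)\,\d t$ equals the limit computed in the first step.

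I expect the main obstacle to be the second step: one must reduce $F$ cleanly to a quadratic in $X$, control the cross term via the uniform $L^2$-bound \eqref{square estimate zero intial time} against the exponential decay \eqref{delta square estimate zero intial time}, and check carefully that the closed-loop data $[A+B\Theta,C]$ together with $Bv+b\in\mathscr{B}_\tau$ genuinely satisfy the hypotheses of Corollary \ref{priori estimate zero intial time}. The first step is essentially bookkeeping with the shift $\theta_\tau$ and its measure preserving property.
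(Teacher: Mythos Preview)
Your proposal is correct and follows essentially the same approach as the paper: both arguments reduce $F$ to a quadratic in $X$ with bounded $\tau$-random periodic coefficients, use the random periodicity of $X^{\xi^{\Theta,v},\Theta,v}$ together with the measure-preserving property of $\theta_\tau$ to identify the per-period contribution, and then control the discrepancy between the $x$- and $\xi^{\Theta,v}$-trajectories via the exponential decay \eqref{delta square estimate zero intial time} combined with the uniform bound \eqref{square estimate zero intial time} and Cauchy--Schwarz. The only difference is organizational: the paper interleaves the two steps inside a single decomposition $T=N\tau+l$ and compares period by period, whereas you first establish the true limit along the periodic trajectory and then show the difference of the two time-averaged costs is $O(1/T)$; your presentation is in fact slightly cleaner and yields directly that the $\varliminf$ is a genuine limit.
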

\begin{proof}
	For any $T>0$, there exists $N\in\mathbb{N}$ and $l\in[0,\tau)$ such that $T=N\tau+l$. Hence
	\begin{equation}\label{wz17}
		\begin{aligned}		
			&\frac{1}{T}\mathbb{E}\int_0^TF(t,X^{x,\Theta,v}_t,u^{x,\Theta,v}_t)\d t\\				=&\left(\frac{1}{T}-\frac{1}{N\tau}\right)\mathbb{E}\int_0^TF(t,X^{x,\Theta,v}_t,u^{x,\Theta,v}_t)\d t+\frac{1}{N\tau}\mathbb{E}\int_{N\tau}^TF(t,X^{x,\Theta,v}_t,u^{x,\Theta,v}_t)\d t\\		&+\frac{1}{N\tau}\mathbb{E}\sum_{k=0}^{N-1}\int_{k\tau}^{(k+1)\tau}F(t,X^{x,\Theta,v}_t,u^{x,\Theta,v}_t)\d t\\
			=&-\frac{l}{N\tau T}\mathbb{E}\int_0^TF(t,X^{x,\Theta,v}_t,u^{x,\Theta,v}_t)\d t+\frac{1}{N\tau}\mathbb{E}\int_0^lF(t+N\tau,X^{x,\Theta,v}_{t+N\tau},u^{x,\Theta,v}_{t+N\tau})\d t\\	&+\frac{1}{N\tau}\mathbb{E}\sum_{k=0}^{N-1}\int_{0}^{\tau}F(t+k\tau,X^{x,\Theta,v}_{t+k\tau},u^{x,\Theta,v}_{t+k\tau})\d t.
		\end{aligned}
	\end{equation}
	Since
	\begin{equation*}
		\begin{aligned}
			&F(t,X^{x,\Theta,v}_t,u^{x,\Theta,v}_t)\\
=&\left\langle(Q_t+S_t^\top\Theta_t+\Theta_t^\top S_t+\Theta_t^\top R_t\Theta_t)X^{x,\Theta,v}_t,X^{x,\Theta,v}_t\right\rangle\\
			&+2\left\langle(S_t+R_t\Theta_t)X^{x,\Theta,v}_t,v_t\right\rangle+2\left\langle X^{x,\Theta,v}_t,q_t+\Theta_t^\top\rho_t\right\rangle+\Big\langle R_tv_t,v_t\Big\rangle+2\Big\langle\rho_t,v_t\Big\rangle,
		\end{aligned}
	\end{equation*}
	there exist $H^a\in\mathscr{B}_\tau(\mathbb{S}^n)$, $H^b\in\mathscr{B}_\tau(\mathbb{R}^n)$ and $H^c\in\mathscr{B}_\tau(\mathbb{R})$ such that for any $t\ge 0$,
	\begin{equation*}
		\begin{aligned}	
			F(t,X^{x,\Theta,v}_t,u^{x,\Theta,v}_t)=\Big\langle H^a_tX^{x,\Theta,v}_t,X^{x,\Theta,v}_t\Big\rangle+\Big\langle H^b_t,X^{x,\Theta,v}_t\Big\rangle+H^c_t.
		\end{aligned}
	\end{equation*}
For the first two terms on the right hand of \eqref{wz17}, as $N\rightarrow\infty$ which is equivalent to $T\rightarrow\infty$, it follows from \eqref{square estimate zero intial time} that
	\begin{equation*}
		\begin{aligned}
			&\Big|\frac{l}{N\tau T}\mathbb{E}\int_0^TF(t,X^{x,\Theta,v}_t,u^{x,\Theta,v}_t)\d t\Big|+\Big|\frac{1}{N\tau}\mathbb{E}\int_0^lF(t+N\tau,X^{x,\Theta,v}_{t+N\tau},u^{x,\Theta,v}_{t+N\tau})\d t\Big|\\
			\le&\frac{lc}{N\tau}\sup\limits_{t\ge 0}\Big(1+\mathbb{E}|X^{x,\Theta,v}_t|^2\Big)\le\frac{c}{N}(1+|x|^2)\to0.
		\end{aligned}
	\end{equation*}

To deal with the last term on the right hand of \eqref{wz17}, take $(K,L)\in L_{\mathbb{F}}^\infty(0,\infty;\mathbb{S}^n)\times L_{\mathbb{F}}^{2,loc}(0,\infty;\mathbb{S}^n)$ to be the unique $\tau$-random periodic solution to  \eqref{linear BSDE Lambda=I} and set 	
\begin{equation*}
		\begin{aligned}
			\widehat{X}^{\Theta,v}_t\triangleq X_t^{x,\Theta,v}-X_t^{\xi^{\Theta,v},\Theta,v}.
		\end{aligned}
	\end{equation*}
By \eqref{delta square estimate zero intial time} and \eqref{square estimate zero intial time}, for any $t\in[0,\tau)$, we have
	\begin{equation*}
		\begin{aligned}
			&\Big|\mathbb{E}\Big(\Big\langle H^a_{t+k\tau}X^{x,\Theta,v}_{t+k\tau},X^{x,\Theta,v}_{t+k\tau}\Big\rangle-\Big\langle H^a_{t+k\tau}X^{\xi^{\Theta,v},\Theta,v}_{t+k\tau},X^{\xi^{\Theta,v},\Theta,v}_{t+k\tau}\Big\rangle\Big)\Big|\\
			\le&\mathbb{E}\Big\langle H^a_{t+k\tau}K_{t+k\tau}^{-1}K_{t+k\tau}\widehat{X}^{\Theta,v}_{t+k\tau},\widehat{X}^{\Theta,v}_{t+k\tau}\Big\rangle+2\Big|\Big\langle K_{t+k\tau}^{-\frac{1}{2}}H^a_{t+k\tau}X^{\xi^{\Theta,v},\Theta,v}_{t+k\tau},K_{t+k\tau}^{\frac{1}{2}}\widehat{X}^{\Theta,v}_{t+k\tau}\Big\rangle\Big|\\
			\le&c\mathbb{E}\Big\langle K_{t+k\tau}\widehat{X}^{\Theta,v}_{t+k\tau},\widehat{X}^{\Theta,v}_{t+k\tau}\Big\rangle+c\Big(\mathbb{E}\big|X^{\xi^{\Theta,v},\Theta,v}_{t+k\tau}\big|^2\Big)^{\frac{1}{2}}\Big(\mathbb{E}\Big\langle K_{t+k\tau}\widehat{X}^{\Theta,v}_{t+k\tau},\widehat{X}^{\Theta,v}_{t+k\tau}\Big\rangle\Big)^{\frac{1}{2}}\\
			\le&c(1+\mathbb{E}|x-\xi^{\Theta,v}|^2)e^{-\lambda k\tau}+c(1+\mathbb{E}|\xi^{\Theta,v}|^2)^{\frac{1}{2}}(1+\mathbb{E}|x-\xi^{\Theta,v}|^2)^{\frac{1}{2}}e^{-\frac{\lambda}{2} k\tau}\\
			\le&c(1+|x|^2+\mathbb{E}|\xi^{\Theta,v}|^2)e^{-\frac{\lambda}{2} k\tau},
		\end{aligned}
	\end{equation*}
and similarly
	\begin{equation*}
		\begin{aligned}
			\Big|\mathbb{E}\Big(\Big\langle H^b_{t+k\tau},X^{x,\Theta,v}_{t+k\tau}\Big\rangle-\Big\langle H^b_{t+k\tau},X^{\xi^{\Theta,v},\Theta,v}_{t+k\tau}\Big\rangle\Big)\Big|
			\le&c(1+|x|^2+\mathbb{E}|\xi^{\Theta,v}|^2)e^{-\frac{\lambda}{2} k\tau}.
		\end{aligned}
	\end{equation*}
    Hence as $N\rightarrow\infty$, we have
    \begin{equation*}
    	\begin{aligned} &\Big|\mathbb{E}\frac{1}{N}\sum_{k=0}^{N-1}F(t+k\tau,X^{x,\Theta,v}_{t+k\tau},u^{x,\Theta,v}_{t+k\tau})-\mathbb{E}F(t,X^{\xi^{\Theta,v},\Theta,v}_t,u^{\xi^{\Theta,v},\Theta,v}_t)\Big|\\
    	=&\Big|\frac{1}{N}\sum_{k=0}^{N-1}\mathbb{E}\Big(F(t+k\tau,X^{x,\Theta,v}_{t+k\tau},u^{x,\Theta,v}_{t+k\tau})-F(t+k\tau,X^{\xi^{\Theta,v},\Theta,v}_{t+k\tau},u^{\xi^{\Theta,v},\Theta,v}_{t+k\tau})\Big)\Big|\\
   	    =&\Big|\frac{1}{N}\sum_{k=0}^{N-1}\mathbb{E}\Big(\Big\langle H^a_{t+k\tau}X^{x,\Theta,v}_{t+k\tau},X^{x,\Theta,v}_{t+k\tau}\Big\rangle-\Big\langle H^a_{t+k\tau}X^{\xi^{\Theta,v},\Theta,v}_{t+k\tau},X^{\xi^{\Theta,v},\Theta,v}_{t+k\tau}\Big\rangle\\
	    &\ \ \ \ \ \ \ \ \ \ \ \ \ \ +\Big\langle H^b_{t+k\tau},X^{x,\Theta,v}_{t+k\tau}\Big\rangle-\Big\langle H^b_{t+k\tau},X^{\xi^{\Theta,v},\Theta,v}_{t+k\tau}\Big\rangle\Big)\Big|\\
   		\le&\frac{1}{N}\sum_{k=0}^{N-1}c(1+|x|^2+\mathbb{E}|\xi^{\Theta,v}|^2)e^{-\frac{\lambda}{2} k\tau}\le\frac{c}{N}(1+|x|^2+\mathbb{E}|\xi^{\Theta,v}|^2)(1-e^{-\frac{\lambda}{2}\tau})^{-1}\to0.
    	\end{aligned}
    \end{equation*}
	Then it follows from the dominated convergence theorem that, as $N\rightarrow\infty$,
	\begin{equation*}
		\begin{aligned}		\frac{1}{N\tau}\mathbb{E}\sum_{k=0}^{N-1}\int_{0}^{\tau}F(t+k\tau,X^{x,\Theta,v}_{t+k\tau},u^{x,\Theta,v}_{t+k\tau})\d t\rightarrow\frac{1}{\tau}\mathbb{E}\int_0^\tau F(t,X^{\xi^{\Theta,v},\Theta,v}_t,u^{\xi^{\Theta,v},\Theta,v}_t)\d t.
		\end{aligned}
	\end{equation*}
Therefore, Theorem \ref{transffered form by random periodic solution} follows as the limit $T\rightarrow\infty$ is taken in (\ref{wz17}).
\end{proof}

Based on Corollary \ref{Riccati posedness} and Proposition \ref{linear one order BSDE equation}, we get the random periodic solution to another BSDE which plays a role in completing the square to get the optimal solution to our concerned control problem.
\begin{prop}\label{Riccati BSDE} Assume {\textbf{(A1)}}--{\textbf{(A3)}} and that $(K,L)$ is the $\tau$-random periodic solution to \eqref{Riccati equation}. Then the linear BSDE
	\begin{equation}\label{Riccati and one-order BSDE}
		\begin{aligned}
			\d \eta_t=&-\Big((A_t-B_tR_t^{-1}(B_t^\top K_t+S_t))^\top\eta_t+C_t^\top\zeta_t+K_tb_t+C_t^\top K_t\sigma_t+L_t\sigma_t+q_t\\
			&\ \ \ \ \ -(B_t^\top K_t+S_t)^\top R_t^{-1}\rho_t\Big)\d t+\zeta_t\d W_t
		\end{aligned}
	\end{equation}
admits a unique $\tau$-random periodic solution $(\eta,\zeta)\in S_{\mathbb{F}}^2(0,\infty;\mathbb{S}^n)\times L_{\mathbb{F}}^{2,loc}(0,\infty;\mathbb{S}^n)$.
\end{prop}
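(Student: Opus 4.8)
The plan is to recognize that \eqref{Riccati and one-order BSDE} is a particular instance of the linear BSDE \eqref{linear one order BSDE equation} handled in Proposition \ref{linear one order BSDE}, once the stochastic Riccati equation \eqref{Riccati equation} is rewritten in ``completed-square'' form. Accordingly, I would set
\[
\widehat{A}_t\triangleq A_t-B_tR_t^{-1}(B_t^\top K_t+S_t),\qquad t\ge0.
\]
Since $R$ is uniformly positive definite by \textbf{(A2)} we have $R^{-1}\in\mathscr{B}_\tau(\mathbb{S}^m)$, and since $K\in\mathscr{B}_\tau(\mathbb{S}_+^n)$ by Corollary \ref{Riccati posedness} it follows that $\widehat{A}\in\mathscr{B}_\tau(\mathbb{R}^{n\times n})$. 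Moreover, by Corollary \ref{Riccati posedness}, $\Theta^*\triangleq-R^{-1}(B^\top K+S)\in\mathscr{S}_\tau[A,C;B,0]$, so that $\widehat{A}=A+B\Theta^*$; hence, by the remark in the definition of stabilizability, $[\widehat{A},C]$ is $\tau$-random periodic mean-square exponentially stable.

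Next I would verify that the pair $(K,L)$ solving \eqref{Riccati equation} is precisely the unique $\tau$-random periodic solution of the linear BSDE \eqref{linear BSDE equation} associated with the coefficients $[\widehat{A},C]$ and
\[
\Lambda_t\triangleq Q_t-S_t^\top R_t^{-1}S_t+K_tB_tR_t^{-1}B_t^\top K_t .
\]
This is a routine algebraic manipulation: expanding $K_t\widehat{A}_t+\widehat{A}_t^\top K_t$ and using $B_t^\top K_t=(B_t^\top K_t+S_t)-S_t$, one checks that the drift of \eqref{Riccati equation} equals $K_t\widehat{A}_t+\widehat{A}_t^\top K_t+C_t^\top K_tC_t+L_tC_t+C_t^\top L_t+\Lambda_t$. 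Furthermore $\Lambda\in\mathscr{B}_\tau(\mathbb{S}_+^n)$ is uniformly positive definite: $Q-S^\top R^{-1}S$ is uniformly positive definite by \textbf{(A2)}, $KBR^{-1}B^\top K\ge0$ because $R\in\mathscr{B}_\tau(\mathbb{S}_+^m)$, and $\Lambda$ is a finite sum of products of functions in $\mathscr{B}_\tau$, hence bounded and $\tau$-random periodic. With this, the hypotheses of Proposition \ref{linear one order BSDE} are met for $[\widehat{A},C]$ and $(K,L)$.

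It then remains only to match the inhomogeneous data. Setting $\widehat{\lambda}_t\triangleq q_t-(B_t^\top K_t+S_t)^\top R_t^{-1}\rho_t$, one has $b,\sigma,\widehat{\lambda}\in\mathscr{B}_\tau(\mathbb{R}^n)$ by \textbf{(A1)} together with $K\in\mathscr{B}_\tau(\mathbb{S}_+^n)$ and $R^{-1}\in\mathscr{B}_\tau(\mathbb{S}^m)$, and with this notation \eqref{Riccati and one-order BSDE} is literally \eqref{linear one order BSDE equation} with $A$ and $\lambda$ replaced by $\widehat{A}$ and $\widehat{\lambda}$ and with the same $C,K,L,b,\sigma$. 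An application of Proposition \ref{linear one order BSDE} then produces a unique $\tau$-random periodic solution $(\eta,\zeta)\in S_{\mathbb{F}}^2(0,\infty;\mathbb{R}^n)\times L_{\mathbb{F}}^{2,loc}(0,\infty;\mathbb{R}^n)$, which is the assertion.

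Since the whole argument is a reduction to Proposition \ref{linear one order BSDE}, no serious difficulty is expected. The only point demanding care is the algebraic identification of $\Lambda$: because the hypothesis of Proposition \ref{linear one order BSDE} on $(K,L)$ refers to the \emph{linear} equation \eqref{linear BSDE equation}, one must check that the pair solving the \emph{quadratic} Riccati equation \eqref{Riccati equation} does coincide with the solution of the appropriate linear equation, and that the coefficient $\Lambda$ so produced --- which contains $K$ itself --- indeed belongs to $\mathscr{B}_\tau(\mathbb{S}_+^n)$.
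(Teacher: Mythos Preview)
Your proposal is correct and follows precisely the route the paper intends: the paper introduces this proposition with the sentence ``Based on Corollary \ref{Riccati posedness} and Proposition \ref{linear one order BSDE}, we get the random periodic solution to another BSDE\ldots'' and gives no further proof. Your write-up supplies the details the paper leaves implicit --- in particular the algebraic identification $\Lambda_t=Q_t-S_t^\top R_t^{-1}S_t+K_tB_tR_t^{-1}B_t^\top K_t$ showing that the Riccati pair $(K,L)$ satisfies the linear equation \eqref{linear BSDE equation} for the coefficients $[\widehat{A},C]$ --- and is entirely in line with the authors' approach.
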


Now let's see that the equivalent form of cost functional derived in Theorem \ref{transffered form by random periodic solution} provides a straightforward way to investigate the ergodic control problem over a single periodic interval.
\begin{prop}\label{transffered form by BSDE} Assume {\textbf{(A1)}}--{\textbf{(A3)}} and that  $X^{\xi^{\Theta,v},\Theta,v}$, $(\Theta,v)\in\mathbb{U}$, is the solution to the state equation \eqref{wz2} with initial state $\xi^{\Theta,v}\in L_{\mathscr{F}}^2(\Omega;\mathbb{R}^n)$, where $\xi^{\Theta,v}$ is given in Proposition \ref{wz31}, and $(K,L)$ and $(\eta,\zeta)$ are solutions to \eqref{Riccati equation} and \eqref{Riccati and one-order BSDE}, respectively. Then
\begin{equation*}
		\begin{aligned}
			\mathcal{E}(x,u^{x,\Theta,v})
			=&\frac{1}{\tau}\mathbb{E}\int_{0}^{\tau}\Big(\Big\langle R_t\Big(\Theta_t+R_t^{-1}(B_t^\top K_t+S_t)\Big)X^{\xi^{\Theta,v},\Theta,v}_t,\Big(\Theta_t+R_t^{-1}(B_t^\top K_t+S_t)\Big)X^{\xi^{\Theta,v},\Theta,v}_t\Big\rangle\\
			&\ \ \ \ \ \ \ \ \ \ +2\Big\langle\Big(\Theta_t+R_t^{-1}(B_t^\top K_t+S_t)\Big)X^{\xi^{\Theta,v},\Theta,v}_t,R_tv_t+B_t^\top\eta_t+\rho_t\Big\rangle+\Big\langle R_tv_t,v_t\Big\rangle\\
			&\ \ \ \ \ \ \ \ \ \ +2\Big\langle v_t,B_t^\top\eta_t+\rho_t\Big\rangle+\Big\langle K_t\sigma_t,\sigma_t\Big\rangle+2\Big\langle\eta_t,b_t\Big\rangle+2\Big\langle\zeta_t,\sigma_t\Big\rangle\Big)\d t.
		\end{aligned}
	\end{equation*}
\end{prop}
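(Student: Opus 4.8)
The plan is to combine Theorem \ref{transffered form by random periodic solution}, which already reduces $\mathcal E(x,u^{x,\Theta,v})$ to $\frac1\tau\mathbb E\int_0^\tau F(t,X_t,u_t)\,\d t$ with $X\triangleq X^{\xi^{\Theta,v},\Theta,v}$ the random periodic state from Proposition \ref{wz31} and $u_t=\Theta_tX_t+v_t$, with a completion‑of‑squares argument performed over the single period $[0,\tau]$, using the random periodic solutions $(K,L)$ of \eqref{Riccati equation} (Corollary \ref{Riccati posedness}) and $(\eta,\zeta)$ of \eqref{Riccati and one-order BSDE} (Proposition \ref{Riccati BSDE}). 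Writing $\widehat\Theta_t\triangleq\Theta_t+R_t^{-1}(B_t^\top K_t+S_t)$, the target identity is that $\frac1\tau\mathbb E\int_0^\tau F(t,X_t,u_t)\,\d t$ equals the displayed right‑hand side, all of whose terms are quadratic or lower order in $(\widehat\Theta_tX_t,v_t)$ together with the inhomogeneous pieces $\langle K_t\sigma_t,\sigma_t\rangle+2\langle\eta_t,b_t\rangle+2\langle\zeta_t,\sigma_t\rangle$.

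First I would apply It\^o's formula to $\big\langle K_tX_t,X_t\big\rangle+2\big\langle\eta_t,X_t\big\rangle$ on $[0,\tau]$, substituting the dynamics of $K$ from \eqref{Riccati equation}, of $\eta$ from \eqref{Riccati and one-order BSDE} and of $X$ from \eqref{wz2}. Since $X\in S_{\mathbb F}^2(0,\tau;\mathbb R^n)$ by the standard estimate for the linear SDE \eqref{wz2} with $\xi^{\Theta,v}\in L_{\mathscr F}^2(\Omega;\mathbb R^n)$ and bounded coefficients, while $K\in L_{\mathbb F}^\infty$, $\eta\in S_{\mathbb F}^2$ and $L,\zeta\in L_{\mathbb F}^{2,loc}$, a routine localization argument shows the stochastic integral has zero expectation, leaving
\[
\mathbb E\big(\langle K_\tau X_\tau,X_\tau\rangle+2\langle\eta_\tau,X_\tau\rangle\big)-\mathbb E\big(\langle K_0X_0,X_0\rangle+2\langle\eta_0,X_0\rangle\big)=\mathbb E\int_0^\tau G_t\,\d t,
\]
where $G_t$ is an explicit quadratic form in $(X_t,v_t)$ whose coefficients are built from $A,B,C,Q,S,R,q,\rho,b,\sigma$ and $K,L,\eta,\zeta$. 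By Proposition \ref{wz31}, Corollary \ref{Riccati posedness} and Proposition \ref{Riccati BSDE} the triple $(X,K,\eta)$ is $\tau$‑random periodic, so $X_\tau=\theta_\tau\circ X_0$, $K_\tau=\theta_\tau\circ K_0$, $\eta_\tau=\theta_\tau\circ\eta_0$ a.s.; since $\theta_\tau$ commutes with the algebraic operations and preserves $\mathbb P$, the two boundary expectations coincide, whence $\mathbb E\int_0^\tau G_t\,\d t=0$. Adding $\frac1\tau$ times this vanishing quantity to the integral of Theorem \ref{transffered form by random periodic solution} therefore does not change $\mathcal E(x,u^{x,\Theta,v})$.

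It then remains to verify the pointwise identity asserting that $F(t,X_t,u_t)+G_t$ equals the integrand of the claimed expression. Expanding $u_t=\Theta_tX_t+v_t$ in $F$ and the Riccati drift in $G_t$, one checks that the pure $Q_t$ terms cancel, that the $C_t^\top K_tC_t+L_tC_t+C_t^\top L_t$ terms cancel against those produced by the It\^o correction $\nu^\top K_t\nu$ and the $L_t\,\d W_t$ cross term (here $\nu_t=C_tX_t+\sigma_t$), that the quadratic‑in‑$X$ part collapses to $\langle R_t\widehat\Theta_tX_t,\widehat\Theta_tX_t\rangle$ via the algebraic identity $\widehat\Theta_t^\top R_t\widehat\Theta_t=\Theta_t^\top R_t\Theta_t+\Theta_t^\top(B_t^\top K_t+S_t)+(B_t^\top K_t+S_t)^\top\Theta_t+(B_t^\top K_t+S_t)^\top R_t^{-1}(B_t^\top K_t+S_t)$, that the $(X,v)$‑bilinear terms assemble into $2\langle R_t\widehat\Theta_tX_t,v_t\rangle$, the $\eta$‑dependent linear‑in‑$X$ terms into $2\langle B_t^\top\eta_t,\widehat\Theta_tX_t\rangle$, the $\rho$‑dependent linear‑in‑$X$ terms (together with the cancellation $2\langle q_t,X_t\rangle-2\langle q_t,X_t\rangle$) into $2\langle\rho_t,\widehat\Theta_tX_t\rangle$, and the $v$‑ and $(\sigma,b)$‑terms match directly. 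Conceptually the only delicate points are the rigorous vanishing of the stochastic integral and of the boundary terms over $[0,\tau]$; for the latter the simultaneous $\tau$‑random periodicity of $X$, $K$ and $\eta$ and the measure preservation of $\theta_\tau$ are exactly what is needed, while the remaining term‑matching, though lengthy, is entirely mechanical.
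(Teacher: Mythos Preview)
Your proposal is correct and follows essentially the same route as the paper: reduce $\mathcal E(x,u^{x,\Theta,v})$ to the single-period integral via Theorem \ref{transffered form by random periodic solution}, apply It\^o's formula to $\langle K_tX_t,X_t\rangle+2\langle\eta_t,X_t\rangle$ on $[0,\tau]$, use the $\tau$-random periodicity of $X,K,\eta$ together with the measure preservation of $\theta_\tau$ to kill the boundary terms, and then carry out the completion-of-squares algebra. The paper organizes the bookkeeping slightly differently (it inserts $\pm\tau\mathcal E(x,u^{x,\Theta,v})$ inside the It\^o expansion rather than phrasing it as ``adding a vanishing quantity''), but the ingredients and logic are identical.
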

\begin{proof}
	To save space, we write $X^{\xi^{\Theta,v},\Theta,v}_t$ as $X_t$ in this proof. By Proposition \ref{wz31}, it yields that
	\begin{equation*}
		\begin{aligned}
			\mathbb{E}\Big(\Big\langle K_\tau X_\tau,X_\tau\Big\rangle+2\Big\langle\eta_\tau,X_\tau\Big\rangle\Big)
			=&\mathbb{E}\Big(\Big\langle(\theta_\tau\circ K_0)(\theta_\tau\circ \xi^{\Theta,v}),\theta_\tau\circ \xi^{\Theta,v}\Big\rangle+2\Big\langle\theta_\tau\circ\eta_0,\theta_\tau\circ \xi^{\Theta,v}\Big\rangle\Big)\\
			=&\mathbb{E}\Big(\Big\langle K_0\xi^{\Theta,v},\xi^{\Theta,v}\Big\rangle+2\Big\langle\eta_0,\xi^{\Theta,v}\Big\rangle\Big).
		\end{aligned}
	\end{equation*}
On the other hand, it follows from Theorem \ref{transffered form by random periodic solution} that
	\begin{equation*}
		\begin{aligned}
			\tau\mathcal{E}(x,u^{x,\Theta,v})
			=&\mathbb{E}\int_0^\tau\Big(\Big\langle(Q_t+S_t^\top\Theta_t+\Theta_t^\top S_t+\Theta_t^\top R_t\Theta_t)X_t,X_t\Big\rangle+2\Big\langle(S_t+R_t\Theta_t)X_t,v_t\Big\rangle\\
			&\ \ \ \ \ \ \ \ \ +2\Big\langle X_t,q_t+\Theta_t^\top\rho_t\Big\rangle+\Big\langle R_tv_t,v_t\Big\rangle+2\Big\langle\rho_t,v_t\Big\rangle\Big)\d t.
		\end{aligned}
	\end{equation*}
Hence applying It\^{o} formula to $\Big\langle K_tX_t,X_t\Big\rangle+2\Big\langle\eta_t,X_t\Big\rangle$, we have
\begin{equation}\label{wz18}
		\begin{aligned}
			0=&\mathbb{E}\int_{0}^{\tau}\d\Big(\left\langle K_tX_t,X_t\right\rangle+2\left\langle\eta_t,X_t\right\rangle\Big)\\
			=&\mathbb{E}\int_{0}^{\tau}\Big(-\Big\langle\Big(K_tA_t+A_t^\top K_t+C_t^\top K_tC_t+L_tC_t+C_t^\top L_t+Q_t\\
			&\ \ \ \ \ \ \ \ \ \ \ \ \ \ -(B_t^\top K_t+S_t)^\top R_t^{-1}(B_t^\top K_t+S_t)\Big)X_t,X_t\Big\rangle\\
			&\ \ \ \ \ \ \ \ \ +2\Big\langle K_t\big((A_t+B_t\Theta_t)X_t+B_tv_t+b_t\big),X_t\Big\rangle+\Big\langle K_t\big(C_tX_t+\sigma_t\big),\big(C_tX_t+\sigma_t\big)\Big\rangle\\
			&\ \ \ \ \ \ \ \ \ -2\Big\langle\Big((A_t-B_tR_t^{-1}(B_t^\top K_t+S_t))^\top\eta_t+C_t^\top\zeta_t+K_tb_t+C_t^\top K_t\sigma_t+L_t\sigma_t+q_t\\
&\ \ \ \ \ \ \ \ \ \ \ \ \ \ \ -(B_t^\top K_t+S_t)^\top R_t^{-1}\rho_t\Big),X_t\Big\rangle+2\Big\langle L_t\big(C_tX_t+\sigma_t\big),X_t\Big\rangle\\
			&\ \ \ \ \ \ \ \ \ +2\Big\langle\eta_t,(A_t+B_t\Theta_t)X_t+B_tv_t+b_t\Big\rangle+2\Big\langle\zeta_t,C_tX_t+\sigma_t\Big\rangle\\
			&\ \ \ \ \ \ \ \ \ +\Big\langle(Q_t+S_t^\top\Theta_t+\Theta_t^\top S_t+\Theta_t^\top R_t\Theta_t)X_t,X_t\Big\rangle+\Big\langle R_tv_t,v_t\Big\rangle\\
			&\ \ \ \ \ \ \ \ \ +2\Big\langle(S_t+R_t\Theta_t)X_t,v_t\Big\rangle+2\Big\langle X_t,q_t+\Theta_t^\top\rho_t\Big\rangle+2\Big\langle\rho_t,v_t\Big\rangle\Big)\d t-\tau\mathcal{E}(x,u^{x,\Theta,v})
		\end{aligned}
	\end{equation}
\begin{equation*}
		\begin{aligned}			
=&\mathbb{E}\int_{0}^{\tau}\Big(\Big\langle R_t\Big(\Theta_t+R_t^{-1}(B_t^\top K_t+S_t)\Big)X_t,\Big(\Theta_t+R_t^{-1}(B_t^\top K_t+S_t)\Big)X_t\Big\rangle\\
			&\ \ \ \ \ \ \ \ \ +2\Big\langle\Big(\Theta_t+R_t^{-1}(B_t^\top K_t+S_t)\Big)X_t,R_tv_t+B_t^\top\eta_t+\rho_t\Big\rangle+\Big\langle R_tv_t,v_t\Big\rangle\\
			&\ \ \ \ \ \ \ \ \ +2\Big\langle v_t,B_t^\top\eta_t+\rho_t\Big\rangle+\Big\langle K_t\sigma_t,\sigma_t\Big\rangle+2\Big\langle\eta_t,b_t\Big\rangle+2\Big\langle\zeta_t,\sigma_t\Big\rangle\Big)\d t-\tau\mathcal{E}(x,u^{x,\Theta,v}).
		\end{aligned}
	\end{equation*}
Dividing by $\tau$ on both sides of \eqref{wz18}, we arrive at the conclusion.
\end{proof}

We are ready to give an explicit form of optimal control to \textbf{Problem (CL-LQE)}.
\begin{thm} Assume {\textbf{(A1)}}--{\textbf{(A3)}} and that $(K,L)$ and $(\eta,\zeta)$ are solutions to \eqref{Riccati equation} and \eqref{Riccati and one-order BSDE}, respectively. Then
	{\textbf{Problem (CL-LQE)}} is closed-loop solvable with the optimal controls
	\begin{equation}\label{wz32}
		\begin{aligned}
			u^{x,\bar{\Theta},\bar{v}}_t=\bar{\Theta}_t X^{x,\bar{\Theta},\bar{v}}_t+\bar{v}_t,
		\end{aligned}
	\end{equation}
	where $(\bar{\Theta},\bar{v})\in\mathbb{U}$ and $\xi^{\bar{\Theta},\bar{v}}\in L_{\mathscr{F}}^2(\Omega;\mathbb{R}^n)$ given in Proposition \ref{wz31} satisfy for $t\ge0$,
	\begin{equation}\label{wz24}
		\begin{aligned}
			(\bar{\Theta}_t-\Theta^0_t)X^{\xi^{\bar{\Theta},\bar{v}},\bar{\Theta},\bar{v}}_t+\bar{v}_t-v^0_t=0 \ \ {\text a.s.}
		\end{aligned}
	\end{equation}
	with
	\begin{equation*}
		\begin{aligned}
			\Theta^0_t=-R_t^{-1}(B_t^\top K_t+S_t)\ \ {\text and}\ \ v^0_t=-R_t^{-1}(B_t^\top\eta_t+\rho_t).
		\end{aligned}
	\end{equation*}
	In particular, an optimal control is
	\begin{equation}\label{wz25}
		\begin{aligned}		
			u^{x,\Theta^0,v^0}_t=\Theta^0_t X^{x,\Theta^0,v^0}_t+v^0_t,
		\end{aligned}
	\end{equation}
	and the value function in this case is presented by
	\begin{equation}\label{wz27}
		\begin{aligned}
			V=\frac{1}{\tau}\mathbb{E}\int_0^\tau\Big(-\Big\langle R_t^{-1}(B_t^\top\eta_t+\rho_t),B_t^\top\eta_t+\rho_t\Big\rangle+\Big\langle K_t\sigma_t,\sigma_t\Big\rangle+2\Big\langle\eta_t,b_t\Big\rangle+2\Big\langle\zeta_t,\sigma_t\Big\rangle\Big)\d t.
		\end{aligned}
	\end{equation}
\end{thm}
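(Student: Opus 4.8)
The plan is to read the conclusion off the finite-horizon reformulation of the ergodic cost already established in Proposition~\ref{transffered form by BSDE}, by completing the square in $(\Theta,v)$. Fix an arbitrary $(\Theta,v)\in\mathbb{U}$ and abbreviate $X_t=X^{\xi^{\Theta,v},\Theta,v}_t$. Note that $\Theta_t+R_t^{-1}(B_t^\top K_t+S_t)=\Theta_t-\Theta^0_t$ and, since $v^0_t=-R_t^{-1}(B_t^\top\eta_t+\rho_t)$ gives $R_tv^0_t=-(B_t^\top\eta_t+\rho_t)$, also $R_tv_t+B_t^\top\eta_t+\rho_t=R_t(v_t-v^0_t)$. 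Substituting these identities into the integrand of Proposition~\ref{transffered form by BSDE}, completing the quadratic-in-$v$ piece via $\langle R_tv_t,v_t\rangle+2\langle v_t,B_t^\top\eta_t+\rho_t\rangle=\langle R_t(v_t-v^0_t),v_t-v^0_t\rangle-\langle R_t^{-1}(B_t^\top\eta_t+\rho_t),B_t^\top\eta_t+\rho_t\rangle$, and using the cross term $2\langle(\Theta_t-\Theta^0_t)X_t,R_t(v_t-v^0_t)\rangle$ to form a full square, one obtains
\begin{equation*}
	\mathcal{E}(x,u^{x,\Theta,v})=\frac{1}{\tau}\mathbb{E}\int_0^\tau\Big\langle R_t\big[(\Theta_t-\Theta^0_t)X_t+(v_t-v^0_t)\big],\,(\Theta_t-\Theta^0_t)X_t+(v_t-v^0_t)\Big\rangle\,\d t+V,
\end{equation*}
where $V$ coincides with the right-hand side of \eqref{wz27}; in particular $V$ depends neither on $(\Theta,v)$ nor on $x$, the $x$-dependence having already dropped out in Theorem~\ref{transffered form by random periodic solution}.

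Since $R$ is uniformly positive definite by \textbf{(A2)}, the integrand above is nonnegative, hence $\mathcal{E}(x,u^{x,\Theta,v})\ge V$ for every $(\Theta,v)\in\mathbb{U}$, with equality exactly when $(\Theta_t-\Theta^0_t)X^{\xi^{\Theta,v},\Theta,v}_t+v_t-v^0_t=0$ for a.e.\ $t\in[0,\tau)$, a.s. Because $X^{\xi^{\Theta,v},\Theta,v}$ (Proposition~\ref{wz31}), $\Theta$, $v$, $\Theta^0$ and $v^0$ are all $\tau$-random periodic and $\theta_\tau$ preserves $\mathbb{P}$, this relation self-improves to all $t\ge0$, which is precisely \eqref{wz24}. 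Consequently any $(\bar\Theta,\bar v)\in\mathbb{U}$ satisfying \eqref{wz24} produces an optimal closed-loop control attaining $\mathcal{E}=V$, and conversely every optimal $(\bar\Theta,\bar v)\in\mathbb{U}$ must satisfy \eqref{wz24}.

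It then remains only to produce one admissible minimizer, for which the natural choice is $\bar\Theta=\Theta^0=-R^{-1}(B^\top K+S)$, $\bar v=v^0=-R^{-1}(B^\top\eta+\rho)$, satisfying \eqref{wz24} trivially. Corollary~\ref{Riccati posedness} already furnishes $\Theta^0\in\mathscr{S}_\tau[A,C;B,0]$; one then checks $v^0\in\mathscr{B}_\tau(\mathbb{R}^m)$ from $R^{-1}\in\mathscr{B}_\tau$ and the $\tau$-random periodicity of $\eta$. Thus $(\Theta^0,v^0)\in\mathbb{U}$, the control \eqref{wz25} is optimal, and plugging $\bar\Theta=\Theta^0$, $\bar v=v^0$ into the displayed identity makes the square vanish, so the value function equals $V(x)=\mathcal{E}(x,u^{x,\Theta^0,v^0})=V$ as in \eqref{wz27}, independent of $x$.

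The genuinely delicate point I expect is not the completing-the-square bookkeeping---which is essentially forced by Proposition~\ref{transffered form by BSDE}---but the admissibility verification, specifically that $v^0$ (and hence every candidate $\bar v$) lies in $\mathscr{B}_\tau(\mathbb{R}^m)$, i.e.\ is essentially bounded. This needs a uniform-in-time estimate on $\eta$ sharper than the $S_{\mathbb{F}}^2$-regularity recorded in Proposition~\ref{Riccati BSDE}; one gets it by representing $\eta_r$ as a conditional expectation against the fundamental matrix $\Phi$ of the $\Theta^0$-closed-loop homogeneous system (which is $\tau$-random periodic mean-square exponentially stable by Corollary~\ref{Riccati posedness}), using the independence of $\Phi_s\Phi_r^{-1}$ from $\mathscr{F}_r$ together with the exponential $L^2$-decay and the boundedness of the data $b,\sigma,q,\rho$ to bound $\mathbb{E}\big(\int_r^\infty|\Phi_s\Phi_r^{-1}|\,\d s\,\big|\,\mathscr{F}_r\big)$ uniformly in $r$. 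The rest is routine: confirming that the quadratic form of Proposition~\ref{transffered form by BSDE} completes exactly into the stated square with remainder $V$, and that, since the cost already carries no $x$-dependence, the infimum and the value function are the single constant $V$.
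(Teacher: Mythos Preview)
Your proof is essentially the paper's: invoke Proposition~\ref{transffered form by BSDE}, complete the square to obtain $\mathcal{E}(x,u^{x,\Theta,v})=V+\frac{1}{\tau}\mathbb{E}\int_0^\tau\langle R_t[(\Theta_t-\Theta^0_t)X_t+(v_t-v^0_t)],\,(\Theta_t-\Theta^0_t)X_t+(v_t-v^0_t)\rangle\,\d t$, read off the equality condition~\eqref{wz24}, and extend from $[0,\tau)$ to $[0,\infty)$ by $\tau$-random periodicity. The paper carries out exactly the same three-line algebraic completion and finishes the same way.

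Where you go beyond the paper is in flagging the admissibility of $v^0$. The paper simply asserts ``$(\Theta^0,v^0)\in\mathbb{U}$ by Corollary~\ref{Riccati posedness}'', but that corollary only secures $\Theta^0\in\mathscr{S}_\tau[A,C;B,0]$ and says nothing about $v^0=-R^{-1}(B^\top\eta+\rho)\in\mathscr{B}_\tau(\mathbb{R}^m)$, which would require $\eta\in L_{\mathbb{F}}^\infty(0,\infty;\mathbb{R}^n)$---regularity not provided by Proposition~\ref{Riccati BSDE}. Your proposed remedy via the variation-of-constants representation of $\eta$ against the $\Theta^0$-fundamental matrix is the natural line, but be aware that the driver of~\eqref{Riccati and one-order BSDE} contains the term $L_t\sigma_t$ with $L$ only in $L_{\mathbb{F}}^{2,loc}$, so the ``boundedness of the data $b,\sigma,q,\rho$'' you appeal to does not cover this piece; a uniform $L^\infty$ bound on $\eta$ therefore needs an additional argument (or else a relaxation of the definition of $\mathbb{U}$ to allow $v$ that are merely $\tau$-random periodic in $S_{\mathbb{F}}^2$). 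The paper does not address this, so your observation is a genuine refinement rather than a defect in your argument.
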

\begin{proof}
For $(\Theta,v)\in\mathbb{U}$, $\xi^{\Theta,v}$ is given in Proposition \ref{wz31}, and we write $X^{\xi^{\Theta,v},\Theta,v}_t$ as $X_t$ in this proof to save space.
	By Proposition \ref{transffered form by BSDE},
\begin{equation}\label{wz26}
		\begin{aligned}
			&\mathcal{E}(x,u^{x,\Theta,v})\\
			=&\frac{1}{\tau}\int_{0}^{\tau}\mathbb{E}\Big(\Big\langle\Big(\Theta_t+R_t^{-1}(B_t^\top K_t+S_t)\Big)R_t\Big(\Theta_t+R_t^{-1}(B_t^\top K_t+S_t)\Big)X_t,X_t\Big\rangle\\
			&\ \ \ \ \ \ \ \ \ \ \ +2\Big\langle\Big(\Theta_t+R_t^{-1}(B_t^\top K_t+S_t)\Big)X_t,R_tv_t+B_t^\top\eta_t+\rho_t\Big\rangle+\Big\langle R_tv_t,v_t\Big\rangle\\
			&\ \ \ \ \ \ \ \ \ \ \ +2\Big\langle v_t,B_t^\top\eta_t+\rho_t\Big\rangle+\Big\langle K_t\sigma_t,\sigma_t\Big\rangle+2\Big\langle\eta_t,b_t\Big\rangle+2\Big\langle\zeta_t,\sigma_t\Big\rangle\Big)\d t\\
			=&\frac{1}{\tau}\int_{0}^{\tau}\mathbb{E}\Big(\Big\langle R_t\Big((\Theta_t+R_t^{-1}(B_t^\top K_t+S_t))X_t+v_t\Big),\Big(\Theta_t+R_t^{-1}(B_t^\top K_t+S_t)\Big)X_t+v_t\Big\rangle\\
			&\ \ \ \ \ \ \ \ \ \ \ +2\Big\langle\Big(\Theta_t+R_t^{-1}(B_t^\top K_t+S_t)\Big)X_t+v_t,B_t^\top\eta_t+\rho_t\Big\rangle+\Big\langle K_t\sigma_t,\sigma_t\Big\rangle\\
			&\ \ \ \ \ \ \ \ \ \ \ +2\Big\langle\eta_t,b_t\Big\rangle+2\Big\langle\zeta_t,\sigma_t\Big\rangle\Big)\d t\\
			=&\frac{1}{\tau}\int_{0}^{\tau}\mathbb{E}\Big(\Big\langle R_t\Big((\Theta_t+R_t^{-1}(B_t^\top K_t+S_t))X_t+v_t+R_t^{-1}(B_t^\top\eta_t+\rho_t)\Big),\\
			&\ \ \ \ \ \ \ \ \ \ \ \ \ \ (\Theta_t+R_t^{-1}(B_t^\top K_t+S_t))X_t+v_t+R_t^{-1}(B_t^\top\eta_t+\rho_t)\Big\rangle\\
			&\ \ \ \ \ \ \ \ \ \ \ -\Big\langle R_t^{-1}(B_t^\top\eta_t+\rho_t),(B_t^\top\eta_t+\rho_t)\Big\rangle+\Big\langle K_t\sigma_t,\sigma_t\Big\rangle+2\Big\langle\eta_t,b_t\Big\rangle+2\Big\langle\zeta_t,\sigma_t\Big\rangle\Big)\d t\\
			\ge&\frac{1}{\tau}\mathbb{E}\int_0^\tau\Big(-\Big\langle R_t^{-1}(B_t^\top\eta_t+\rho_t),B_t^\top\eta_t+\rho_t\Big\rangle+\Big\langle K_t\sigma_t,\sigma_t\Big\rangle+2\Big\langle\eta_t,b_t\Big\rangle+2\Big\langle\zeta_t,\sigma_t\Big\rangle\Big)\d t.
		\end{aligned}
	\end{equation}
	Obviously, the equality holds in above if and only if $(\bar{\Theta},\bar{v})\in\mathbb{U}$ satisfies for $t\in[0,\tau)$,
	\begin{equation*}
		\begin{aligned}
				&\mathbb{E}\int_{0}^{\tau}\Big\langle R_t\Big((\bar{\Theta}_t+R_t^{-1}(B_t^\top K_t+S_t))X^{\xi^{\bar{\Theta},\bar{v}},\bar{\Theta},\bar{v}}_t+\bar{v}_t+R_t^{-1}(B_t^\top\eta_t+\rho_t)\Big),\\
			&\ \ \ \ \ \ \ \ \ \ (\bar{\Theta}_t+R_t^{-1}(B_t^\top K_t+S_t))X^{\xi^{\bar{\Theta},\bar{v}},\bar{\Theta},\bar{v}}_t+\bar{v}_t+R_t^{-1}(B_t^\top\eta_t+\rho_t)\Big\rangle\d t=0,
		\end{aligned}
	\end{equation*}
	which implies \eqref{wz24} and gives the explicit form of the closed-loop optimal controls (\ref{wz32}) for $t\in[0,\tau)$.
	
	In particular, \eqref{wz25} gives
	an optimal control since $(\Theta^0,v^0)\in\mathbb{U}$ by Corollary \ref{Riccati posedness}. With $(\Theta^0,v^0)$, the minimum value in \eqref{wz26} is obtained as same as \eqref{wz27}.
	
Since $(\Theta,v)$, $(K,L)$, $(\eta,\zeta)$ and coefficients in \textbf{Problem (CL-LQE)} appearing in above argument are all $\tau$-random periodic, all the results obtained for $t\in[0,\tau)$ still holds for $t\in[0,\infty)$.
\end{proof}
\begin{rmk} We observe from (\ref{wz27}) that the value function  does not depend on the initial time $0$ in  \textbf{Problem (CL-LQE)}. So if the initial times in the state equation and the cost functional are any $t_0>0$ rather than $0$, the value function remains unchanged.
\end{rmk}

\end{document}